\newtheorem{theorem}{Theorem}[section]
\newtheorem{proposition}[theorem]{Proposition}
\newtheorem{lemma}[theorem]{Lemma}
\newtheorem{corollary}[theorem]{Corollary}
\newtheorem{example}[theorem]{Example}
\def\cH{\mathcal H}
\def\Aut{\mbox{\rm Aut}}
\def\K{\mathbb{K}}
\def\deg{\mbox{\rm deg}}
\def\Aut{\mbox{\rm Aut}}
\newcommand{\Fix}{\mbox{\rm Fix}}
\newcommand{\PGU}{\mbox{\rm PGU}}
\newcommand{\SL}{\mbox{\rm SL}}
\newcommand{\aut}{\mbox{\rm Aut}}
\def\neg1{\text{\boldmath$1$}}
\def\neg1{\text{\boldmath$1$}}
\def\cH{\mathcal H}
\def\FF{\mathbb{F}}
\def\Fqq{{\mathbb{F}_{q^2}}}
\def\Fqn{\mathbb{F}_{q}^{n}}
\newtheorem{remark}[theorem]{Remark}
\begin{document}

\title{Non-isomorphic subfields of the BM and GGS maximal function fields}
\author{Peter Beelen}
\address{Department of Applied Mathematics and Computer Science, Technical University of Denmark, DK-2800, Kongens Lyngby, Denmark}
\email{pabe@dtu.dk}

\author{Tobias Drue}
\email{tobiasd@live.dk}

\author{Maria Montanucci}
\address{Department of Applied Mathematics and Computer Science, Technical University of Denmark, DK-2800, Kongens Lyngby, Denmark}
\email{marimo@dtu.dk}

\author{Giovanni Zini}
\address{Universitá degli Studi di Modena e Reggio Emilia, Dipartimento di Scienze Fisiche, Informatiche e Matematiche, Via Campi, 213/b Modena MO, Italy}
\email{giovanni.zini@unimore.it}

\keywords{Hermitian function field, maximal function field, automorphism group}
\subjclass[2010]{Primary 11G, 14G, 14H05} 

\begin{abstract}
In 2016 Tafazolian et al. \cite{TTT} introduced new families of $\mathbb{F}_{q^{2n}}$-maximal function fields $\mathcal{Y}_{n,s}$ and $\mathcal{X}_{n,s,a,b}$ arising as subfields of the first generalized GK function field (GGS). In this way the authors found new examples of maximal function fields that are not isomorphic to subfields of the Hermitian function field. 
In this paper we construct analogous function fields $\tilde{\mathcal{Y}}_{n,s}$ and $\tilde{\mathcal{X}}_{n,s,a,b}$ as subfields of the second generalized GK function field (BM) and determine their automorphism groups. Using that the automorphism group is an invariant under isomorphism, we show that the function fields $\tilde{\mathcal{Y}}_{n,s}$ and ${\mathcal{Y}}_{n,s}$, as well as  $\tilde{\mathcal{X}}_{n,s,a,b}$ and $\mathcal{X}_{n,s,a,b}$, are not isomorphic unless $m/s$ divides $q^2-q+1$ and $3$ divides $n$. In other words, the difference between the BM and GGS function fields can be found again at the level of the subfields that we consider.
\end{abstract}

\maketitle

\section{introduction}


A function field $F$ of genus $g$ defined over a finite field $\mathbb{F}_{q^2}$ with square cardinality $q^2$ is called maximal, if the Hasse–Weil upper bound is attained. More precisely, the Hasse–Weil bound states that the number $N(F)$ of places of degree one of $F$ satisfies $N(F)  \leq  q^2 + 1 + 2gq$, and $F$ is called maximal over $\mathbb{F}_{q^2}$ if $N(F) = q^2+1+2gq$. Nowadays a lot of research is available in the literature about maximal function fields, or equivalently, from a more geometric angle, about maximal algebraic curves. Apart for their intrinsic theoretical interest as extremal objects, maximal function fields have attracted a lot of attention in recent decades due to their applications to coding theory and cryptography. Such function fields are indeed special for the structure of the so-called Weiestrass semigroup at a rational place, which
is the main ingredient used in the literature to construct AG codes with good parameters.

The most important and well-studied example of a $\mathbb{F}_{q^2}$-maximal function field is the Hermitian
one $\mathcal{H}_q:= \mathbb{F}_{q^2}(x, y)$ with $y^{q+1} = x^q + x$.
A reason for the importance of $\mathcal{H}_q$ is that it has the largest possible genus $q(q - 1)/2$ that an $\mathbb{F}_{q^2}$-maximal function field can have. A result commonly attributed to Serre,
see \cite[Proposition 6]{KS}, gives that any subfield of a maximal function field is still maximal.
Therefore many $\mathbb{F}_{q^2}$-maximal function fields have been obtained as subfields of already known maximal function fields, in particular of the Hermitian function field $\mathcal{H}_q$. For a while it was speculated in the research community that perhaps all maximal function fields could be obtained in this way up to isomorphism. However, it was shown by Giulietti and Korchmáros in \cite{GK} that this is not the case. 

Giulietti and Korchmáros constructed indeed an maximal function field over $\mathbb{F}_{q^6}$, today referred to as the GK function field, which is not isomorphic to any subfield of the Hermitian function field $\mathcal{H}_{q^3}$ whenever $q$ is larger than $2$. 
Garcia, G\"uneri, and Stichtenoth presented in \cite{GGS} a new family of maximal function fields $\mathbb{F}_{q^{2n}}$ for any odd $n$, known as the GGS function fields, which generalize the GK ones (they coincide when $n=3$) and are not isomorphic to any Galois subfield of the Hermitian
function field $\mathcal{H}_{q^n}$, see \cite{DM,GMZ}.
Another generalization of the GK function field over $\mathbb{F}_{q^{2n}}$ for any odd $n$ has been introduced by
Beelen and Montanucci in \cite{BM}, which is now known as the BM function field. It is not
isomorphic to any Galois subfield of the Hermitian function field $\mathcal{H}_{q^n}$ as well, unless $q=2$. The GGS and the BM function fields are very similar, in the sense that they both coincide with the GK function field for $n=3$, and they have the same genus for any $n$. However, the GGS and the BM function fields are not isomorphic, even over the algebraic closure of $\mathbb{F}_{q^{2n}}$, as they have different automorphism groups; see \cite{BM}.

Tafazolian, Teherán-Herrera, and Torres presented in \cite{TTT} two further examples $\mathcal{Y}_{n,s}$ and $\mathcal{X}_{a,b,n,s}$ of maximal
function fields over $\mathbb{F}_{q^{2n}}$ for any odd $n$, that are not subfields of the Hermitian function field $\mathcal{H}_{q^n}$. They are closely related to the GGS function fields mentioned above. In fact, $\mathcal{Y}_{n,s}$ and $\mathcal{X}_{a,b,n,s}$ are Galois subfield of the GGS function field over $\mathbb{F}_{q^{2n}}$; in particular, $\mathcal{Y}_{n,1}$ is the GGS function field and hence $\mathcal{Y}_{3,1}$ is the GK function field over $\mathbb{F}_{q^6}$. 
Considering the hard problem of deciding whether two maximal function fields are isomorphic or not, it is then natural to ask: \textit{is it possible to find Galois subfields $\tilde{\mathcal{X}}_{a,b,n,s},\tilde{\mathcal{Y}}_{n,s}$ of the BM function field, replicating the similarity and difference between the GGS and the BM function fields? That is, can we find Galois subfields $\tilde{\mathcal{X}}_{a,b,n,s}$ and $\tilde{\mathcal{Y}}_{n,s}$ of the BM function field with the same genus of  $\mathcal{X}_{a,b,n,s}$ and $\mathcal{Y}_{n,s}$ but with different automorphism groups?}
In this paper, which is based on the Master thesis \cite{Tobias} of the second author, we give a positive answer to this question, constructing two new families of maximal function fields and determining explicitly their full automorphism groups.

The paper is organized as follows. In Section \ref{sec:prel} we provide some preliminary results about maximal function fields and their automorphism groups. In Section \ref{sec:Y} we define the maximal function field $\tilde{\mathcal{Y}}_{n,s}$ and compute its automorphism group. Section \ref{sec:X} introduces the function field $\tilde{\mathcal{X}}_{a,b,n,s}$, studies some Weierstrass semigroups at places of $\tilde{\mathcal{X}}_{a,b,n,s}$, and determines its full automorphism group. Finally, Section \ref{sec:covering} provides a discussion about the isomorphism problem with respect to Galois subfields of the Hermitian function fields for $\tilde{\mathcal{Y}}_{n,s}$ and $\tilde{\mathcal{X}}_{a,b,n,s}$.

\section{Preliminary results}\label{sec:prel}

\subsection{Automorphism groups of algebraic function fields}
In this section we collect some results about automorphism groups of function fields that we are going to use later.

Let $\mathbb{F}$ be a field of characteristic $p\geq0$, $\mathbb{K}$ be the algebraic closure of $\mathbb{F}$, and $F$ be an algebraic function field of genus $g \ge 2$ defined $\mathbb{F}$.
We denote by $\aut(F)$ the group of all $\mathbb{K}$-automorphisms of the function field $\K F$ obtained from $F$ by extending its constant field to $\K$. 
The assumption $g\geq 2$ ensures that $\aut(F)$ is finite. However the classical Hurwitz bound $|\aut(F)| \leq 84(g(F)-1)$ over the complex numbers fails in positive characteristic, and there exist four families of function fields satisfying $|\aut(F)|\geq 8g^3$; see \cite{stichtenoth1973II}, \cite{henn1978}, and \cite[Section 11.12]{HKT}. More precisely, the following is known.
\begin{theorem}\label{thm:stichtenoth_henn}
Let $F$ be a function field of genus $g \ge 2$ defined over an algebraically closed field $\K$ of characteristic $p$.
Then $|\aut(F)|<8g^3$ unless one of the following cases occurs up to $\mathbb{K}$-isomorphism:
\begin{enumerate}
\item $p = 2$ and $F=\K(x,y)$ with $y^2 + y = x^{2k+1}$, where $k > 1$; 
\item $p > 2$ and $F=\K(x,y)$ with $y^2 = x^{q} -x$, where $q=p^h$ and $h > 0$;
\item $F$ is the Hermitian function field $\K(x,y)$ with $y^{q+1}=x^q+x$, where $q = p^h$ and $h > 0$;
\item $p=2$ and $F$ is the Suzuki function field $\K(x,y)$ with $x^{q_0}(x^q + x) = y^q + y$, where $q_0 = 2^r$, $r \geq 1$ and $q = 2q_0^2$. 
\end{enumerate}
\end{theorem}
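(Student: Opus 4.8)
The statement is the classical Stichtenoth--Henn classification, and I would follow the ramification-theoretic strategy underlying \cite{stichtenoth1973II,henn1978}. The plan is to set $G=\aut(F)$, assume $|G|\ge 8g^3$, and extract enough rigidity from the action of $G$ on $F$ to reconstruct $F$ explicitly. The first reduction disposes of the tame situation: applying the Riemann--Hurwitz formula to the Galois cover $F\to F^{G}$, if all ramification were tame one would recover a Hurwitz-type bound $|G|\le 84(g-1)$, far below $8g^3$. Hence I may assume $\car(\K)=p>0$ and that a Sylow $p$-subgroup $S$ of $G$ is large; the entire phenomenon is driven by \emph{wild} ramification, which is precisely why the complex Hurwitz bound is violated here.

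Second, I would localize the action. A key structural input, due to Stichtenoth, is that when $|G|$ is large there is a place $P$ of $F$ whose stabilizer $G_P$ has the form $S_P\rtimes C$, with $S_P$ a nontrivial $p$-group and $C$ cyclic of order prime to $p$, and that the higher ramification groups $G_P^{(i)}$ control $|S_P|$ very tightly. I would express the different exponent at each ramified place by the Hilbert formula and feed it into Riemann--Hurwitz, obtaining
\begin{equation*}
2g-2 \;=\; |G|\,(2g_0-2)\;+\;\sum_{Q}\frac{|G|}{|G_Q|}\,d(Q),\qquad d(Q)=\sum_{i\ge 0}\bigl(|G_Q^{(i)}|-1\bigr),
\end{equation*}
where $g_0$ is the genus of $F^{G}$. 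Bounding the wild contribution of the right-hand side in terms of $g$ and the ramification jumps is the crux of the whole argument.

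Third comes the extremal analysis. One shows that the hypothesis $|G|\ge 8g^3$ forces $g_0=0$, at most two wildly ramified places, and a nearly maximal first ramification jump; this pins down the filtration of $G_P$ so sharply that $S_P$ lies in a very restricted class and the quotient $F^{S_P}$ is rational. Reconstructing the fixed field of $S_P$ from the prescribed ramification data then yields an Artin--Schreier (or iterated Artin--Schreier) defining equation, and re-incorporating the prime-to-$p$ part $C$ together with the global group structure of $G$ identifies $F$, up to $\K$-isomorphism, with exactly one of the four families: the hyperelliptic curve $y^2+y=x^{2k+1}$ when $p=2$, the curve $y^2=x^{q}-x$ when $p>2$, the Hermitian curve, or the Suzuki curve.

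The main obstacle is the precise control of the second (and higher) ramification groups: the inequalities coming directly from Riemann--Hurwitz are not by themselves tight enough to exclude intermediate configurations, so one must invoke the Hasse--Arf integrality of the ramification jumps together with Stichtenoth's bound on $|G_P^{(2)}|$ to force the extremal filtration. I expect the bookkeeping of these jump data, and the passage from the abstract ramification filtration to an explicit defining equation for each of the four families, to be the most delicate and case-heavy part of the argument; everything else is a fairly mechanical, if lengthy, application of the genus formula.
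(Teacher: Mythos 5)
This statement is not proved in the paper at all: it is the classical Stichtenoth--Henn classification, quoted verbatim from \cite{stichtenoth1973II}, \cite{henn1978} and \cite[Section 11.12]{HKT} as a known input. So there is no ``paper's own proof'' to compare against; the only question is whether your outline would stand on its own, and it would not. What you have written is an accurate table of contents for the Stichtenoth--Henn argument --- the reduction to wild ramification via the tame Hurwitz bound $84(g-1)$, the structure $G_P=S_P\rtimes C$ of a stabilizer, Hilbert's different formula fed into Riemann--Hurwitz, the forcing of $\mathrm{Fix}(G)$ rational with very few short orbits, and the final reconstruction of a defining equation --- but every step that actually does work is deferred. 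In particular: (i) the claim that $|G|\ge 8g^3$ forces $g_0=0$ and restricts the short-orbit configuration is itself a nontrivial theorem (essentially \cite[Theorem 11.56]{HKT}, Stichtenoth's first 1973 paper) whose proof you do not reproduce; (ii) the ``precise control of the second and higher ramification groups'' that you correctly identify as the crux is exactly the content of Henn's paper, a long case analysis on $|G_P^{(1)}/G_P^{(2)}|$ and the jumps of the filtration, and naming Hasse--Arf is not a substitute for carrying it out; (iii) the passage from the extremal ramification data to the four explicit equations (including recognizing the Suzuki case, which requires identifying $G$ with $\mathrm{Sz}(q)$ acting on its natural ovoid) is asserted, not derived. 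As a result the proposal establishes nothing beyond the trivial tame reduction.

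To be clear, this is not a criticism of your understanding of the strategy --- the skeleton is the right one and matches the literature the paper cites. But in the context of this paper the correct ``proof'' of Theorem~\ref{thm:stichtenoth_henn} is a citation, and if you intend to supply an actual proof you would need to reproduce, or at least precisely reference and verify the hypotheses of, the specific intermediate results of Stichtenoth and Henn at each of the three places flagged above. As written, the argument has the shape of a proof but none of its load-bearing content.
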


For a subgroup $G$ of $\aut(F)$, let $\bar F={\rm Fix}(G)$ be the fixed field of $G$. The field extension $F/\bar F$ is Galois of degree $|G|$.
A place $P\in F$ is a ramification place of $G$ if the stabilizer $G_P$ of $P$ in $G$ is nontrivial; the ramification index $e_P$ being $|G_P|$. The ramification locus of $G$ is the set of all ramification places. The $G$-orbit of a place $P$ is the subset
$o=\{g(P)\,\mid\, g\in G\}$ of the set of places of $F$. The orbit $o$ is called {\em long} if $|o|=|G|$, otherwise it is called {\em short}. For a place $\bar{Q}$ in $\bar F$, the $G$-orbit $o$ lying over $\bar{Q}$ consists of all places $P$ in $F$ such that $P|\bar{Q}$. If $P\in o$ then $|o|=|G|/|G_P|$ and hence $P$ is a ramification place if and only if $o$ is a short $G$-orbit. It can happen that $G$ has no short orbits, that is, every non-trivial element in $G$ acts fixed-point-free on the places of $F$; in this case we say that the function field extension $F / \bar F$ is unramified. On the other hand, $G$ has always finitely many short orbits.

For a non-negative integer $i$, the $i$-th ramification group $G_P^{(i)}$ of $F$
at $P$ is defined as
$$G_P^{(i)}=\{\alpha \in G_P \mid v_P(\alpha(t)-t)\geq i+1\}, $$ where $t\in F$ is a local parameter at
$P$. Not that $G_P^{(0)}=G_P$. In some texts the notation $G_i(P)$ is used instead of $G_P^{(i)}$, e.g. in \cite[Chapter
IV]{serre1979}. 
The subgroup chain $G_P \supseteq G_P^{(1)} \supseteq \ldots \supseteq G_P^{(i)} \supseteq  \ldots$ is described in the following lemma.

\begin{lemma}{\cite[Theorem 11.49 and Lemma 11.74]{HKT}}  \label{highnormal}
\begin{enumerate}
\item $G_P^{(0)}=G_P=S \rtimes H$, where $S$ is a $p$-group and $H$ is cyclic of order not divisible by $p$.
\item $G_P^{(1)}=S$ is the unique Sylow $p$-subgroup (and maximal normal $p$-subgroup) of $G_P$.
\item For $i \geq 1$, $G_P^{(i)}$ is normal in $G_P$ and the quotient $G_P^{(i)}/G_P^{(i+1)}$ is elementary abelian.
\end{enumerate}
\end{lemma}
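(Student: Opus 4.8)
The plan is to reconstruct the local analysis of the ramification filtration in the style of Serre \cite[Chapter IV]{serre1979}; all three assertions fall out of studying the successive quotients of the chain $G_P \supseteq G_P^{(1)} \supseteq \cdots$. Fix a local parameter $t$ at $P$ and set $i_G(\alpha)=v_P(\alpha(t)-t)$, so that $G_P^{(i)}=\{\alpha\in G_P : i_G(\alpha)\geq i+1\}$. The first step is to record that $i_G(\alpha)$ is independent of the chosen parameter; granting this, the normality of every $G_P^{(i)}$ in $G_P$ is immediate, since for $\beta\in G_P$ and $\alpha\in G_P^{(i)}$, applying the valuation-preserving automorphism $\beta^{-1}$ gives
$$ v_P\!\big(\beta\alpha\beta^{-1}(t)-t\big)=v_P\!\big(\alpha(\beta^{-1}(t))-\beta^{-1}(t)\big)=i_G(\alpha)\geq i+1, $$
because $\beta^{-1}(t)$ is again a local parameter at $P$.

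Next I would linearize the action on the graded pieces of the local ring $\mathcal{O}_P$, with maximal ideal $\mathfrak{m}_P$. Because $\K$ is algebraically closed the residue field at $P$ is $\K$, so the action of $G_P$ on the one-dimensional $\K$-space $\mathfrak{m}_P/\mathfrak{m}_P^2$ produces a homomorphism $\chi\colon G_P\to\K^\times$ determined by $\alpha(t)\equiv\chi(\alpha)\,t\pmod{\mathfrak{m}_P^2}$, whose kernel is exactly $G_P^{(1)}$. Since $G_P$ is finite (this is where $g\geq 2$ enters), its image is a finite subgroup of $\K^\times$, hence cyclic, and of order prime to $p$ because $1$ is the only $p$-th root of unity in characteristic $p$; thus $G_P/G_P^{(1)}$ is cyclic of order coprime to $p$. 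For the higher layers, for each $i\geq 1$ I would define $\psi_i\colon G_P^{(i)}\to\mathfrak{m}_P^{i+1}/\mathfrak{m}_P^{i+2}\cong(\K,+)$ by $\alpha\mapsto\alpha(t)-t$ and check that it is a homomorphism with kernel $G_P^{(i+1)}$. This verification is the one genuinely computational point: writing $\alpha(t)=t+a\,t^{i+1}+\cdots$ and $\beta(t)=t+b\,t^{i+1}+\cdots$ and expanding $\alpha(\beta(t))$, the hypothesis $i\geq 1$ is exactly what forces the cross terms to have order $\geq i+2$, leaving $\alpha\beta(t)=t+(a+b)\,t^{i+1}+\cdots$. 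Hence each $G_P^{(i)}/G_P^{(i+1)}$ with $i\geq 1$ embeds into $(\K,+)$ and is elementary abelian, which together with the normality above gives (3).

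It remains to assemble the structure theorem. The filtration terminates, $\bigcap_i G_P^{(i)}=\{\mathrm{id}\}$, since an automorphism fixing $P$ and the parameter $t$ fixes the completion $\K((t))\supseteq F$ pointwise; as $G_P$ is finite this means $G_P^{(i)}=\{\mathrm{id}\}$ for large $i$. Therefore $G_P^{(1)}$ has a finite subnormal series with elementary abelian $p$-group quotients and is itself a $p$-group. Being normal in $G_P$ with quotient of order prime to $p$, it is a normal Sylow $p$-subgroup, hence the unique Sylow $p$-subgroup and the largest normal $p$-subgroup of $G_P$, which is (2). Finally the coprimality of $|G_P^{(1)}|$ and $[G_P:G_P^{(1)}]$ lets me invoke the Schur--Zassenhaus theorem on the extension $1\to G_P^{(1)}\to G_P\to G_P/G_P^{(1)}\to 1$, producing a complement $H$ and the splitting $G_P=S\rtimes H$ with $S=G_P^{(1)}$ and $H\cong G_P/G_P^{(1)}$ cyclic of order coprime to $p$; this is (1).

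I expect the main obstacle to be the homomorphism check for $\psi_i$: pinning down precisely why $i\geq 1$ makes $\alpha\mapsto\alpha(t)-t$ multiplicative modulo $\mathfrak{m}_P^{i+2}$, and confirming that the identification $\mathfrak{m}_P^{i+1}/\mathfrak{m}_P^{i+2}\cong(\K,+)$ behaves well under the group action. The auxiliary facts — the independence of $i_G(\alpha)$ from the parameter and the cyclicity of finite subgroups of $\K^\times$ — are standard and may be quoted rather than reproved.
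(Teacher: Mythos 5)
Your argument is correct and is exactly the standard proof of this result; the paper itself gives no proof, citing \cite[Theorem 11.49 and Lemma 11.74]{HKT}, and your reconstruction (the character $\chi\colon G_P\to\K^\times$ on $\mathfrak{m}_P/\mathfrak{m}_P^2$ cutting out $G_P^{(1)}$, the additive maps $\psi_i$ into $\mathfrak{m}_P^{i+1}/\mathfrak{m}_P^{i+2}$ for $i\geq 1$, termination of the filtration, and Schur--Zassenhaus for the splitting) is precisely the argument found in that reference and in \cite[Chapter IV]{serre1979}. The only points you defer --- parameter-independence of $i_G(\alpha)$ and cyclicity of finite subgroups of $\K^\times$ --- are indeed standard and legitimately quotable, so there is no gap.
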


Let $\bar{g}$ be the genus of $\bar F={\rm Fix}(G)$. The \emph{Hurwitz
genus formula} \cite[Theorem 3.4.13]{stichtenoth} gives
    \begin{equation}
    \label{eq1}
2g-2=|G|(2\bar{g}-2)+\sum_{P} d_P,
    \end{equation}
    where the different exponent $d_P$ at a place $P$ of $F$ is given by Hilbert's different formula
\begin{equation}
\label{eq1bis}
d_P= \sum_{i\geq 0}(|G_P^{(i)}|-1),
\end{equation}
see \cite[Theorem 11.70]{HKT}. By Hilbert's different formula, $\sum_{P} d_P$ becomes a double summation and hence can also be computed by summing over all non-trivial automorphisms $\alpha$ of $G$ the contribution $i(\sigma)$ of $\sigma$ to $\sum_{P} d_P$, that is the number of ramification groups containing $\alpha$.


A subgroup of $\aut(F)$ is  a prime-to-$p$ group (or a $p'$-subgroup) if its order is prime to $p$. A subgroup $G\leq\aut(F)$ is {\em{tame}} if the stabilizer $G_P$ in $G$ 
of any place $P$ is a $p'$-group. Otherwise, $G$ is called {\em{non-tame}} (or {\em{wild}}). By \cite[Theorem 11.56]{HKT}, if $|G|>84(g-1)$ then $G$ is non-tame. An orbit $o$ of $G$ is {\em{tame}} if $G_P$ is a $p'$-group for $P\in o$. 
Specializing to maximal function fields, more can be said about its automorphisms. 
 
\begin{lemma}{\rm (\cite[Prop. 3.8, Thm. 3.10]{GSY})} \label{prankpel} Let $F$ be an $\mathbb{F}_{q^2}$-maximal function field of genus $g\geq2.$ Every automorphism of $F$ over the algebraic closure of $\mathbb{F}_{q^2}$ is defined over $\mathbb{F}_{q^2}.$ In particular, the automorphism group $\aut(F)$ fixes the set of $\mathbb{F}_{q^2}$-rational places of $F$.  
\end{lemma}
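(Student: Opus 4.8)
The plan is to prove Lemma~\ref{prankpel}, namely that for an $\mathbb{F}_{q^2}$-maximal function field $F$ of genus $g\ge 2$, every automorphism in $\aut(F)$ (defined a priori over $\mathbb{K}=\overline{\mathbb{F}_{q^2}}$) is already defined over $\mathbb{F}_{q^2}$, and consequently that $\aut(F)$ fixes the set of $\mathbb{F}_{q^2}$-rational places setwise. I would start from the defining property of maximality expressed via the Frobenius: since $F$ is $\mathbb{F}_{q^2}$-maximal, the $q^2$-power Frobenius $\Phi$ acting on $\mathrm{Jac}(F)$ (equivalently on the $\ell$-adic Tate module, or on the divisor class group) satisfies $\Phi = -q\cdot\mathrm{Id}$, i.e. the Frobenius acts as multiplication by $-q$. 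This is the standard characterization of maximality coming from the fact that all Frobenius eigenvalues equal $-q$; I would invoke this as the key arithmetic input.

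The core of the argument is then a Galois-descent/commutation step. Let $\sigma\in\aut(F)$ be an automorphism over $\mathbb{K}$, and let $\Phi$ denote the Frobenius map. To say $\sigma$ is defined over $\mathbb{F}_{q^2}$ is exactly to say that $\sigma$ commutes with $\Phi$ (equivalently, $\sigma$ is fixed by the natural Galois action $\sigma\mapsto \Phi\sigma\Phi^{-1}$ of $\gal(\mathbb{K}/\mathbb{F}_{q^2})$). The plan is to look at the conjugate $\tau:=\Phi\sigma\Phi^{-1}$, which is another $\mathbb{K}$-automorphism of $F$, and show $\tau=\sigma$. The mechanism is to pass to the induced actions on $\mathrm{Jac}(F)$: since $\Phi$ acts there as $-q\cdot\mathrm{Id}$, a scalar, it lies in the center of $\mathrm{End}(\mathrm{Jac}(F))$ and hence commutes with the image of $\sigma$ under the natural map $\aut(F)\to \mathrm{Aut}(\mathrm{Jac}(F))$. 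Thus $\sigma$ and $\tau=\Phi\sigma\Phi^{-1}$ induce the \emph{same} automorphism of the Jacobian. First I would record that the representation $\aut(F)\to\mathrm{Aut}(\mathrm{Jac}(F))$ (on $\ell$-torsion for a suitable $\ell$) is faithful for $g\ge 2$, since an automorphism acting trivially on the Jacobian fixes all Weierstrass points and is therefore the identity; from faithfulness $\sigma=\tau$ follows, giving the first assertion.

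The hard part will be justifying the faithfulness/rigidity step cleanly in arbitrary characteristic rather than the scalar commutation, which is formal once maximality gives $\Phi=-q\cdot\mathrm{Id}$. One must ensure that the automorphism group embeds into the automorphisms of an appropriate torsion module and that no nontrivial automorphism acts trivially there; here the genus hypothesis $g\ge 2$ is essential, and one cites the standard fact (Serre--Tate type rigidity, or the action on $\ell$-adic cohomology with $\ell\ne p$) that $\aut(F)\hookrightarrow \mathrm{Aut}(T_\ell \mathrm{Jac}(F))$ has trivial intersection with the identity. This is precisely the content invoked from \cite{GSY} in the cited Prop.~3.8 and Thm.~3.10, so I would lean on that reference for the technical rigidity.

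For the second assertion, I would argue as follows. A place $P$ of $F$ (over $\mathbb{K}$) is $\mathbb{F}_{q^2}$-rational exactly when $P$ is fixed by the Frobenius $\Phi$. Given $\sigma\in\aut(F)$ now known to be defined over $\mathbb{F}_{q^2}$, i.e. $\sigma\Phi=\Phi\sigma$, if $P$ satisfies $\Phi(P)=P$ then
\begin{equation*}
\Phi(\sigma(P))=\sigma(\Phi(P))=\sigma(P),
\end{equation*}
so $\sigma(P)$ is again $\Phi$-fixed and hence $\mathbb{F}_{q^2}$-rational. Therefore $\aut(F)$ permutes the set of $\mathbb{F}_{q^2}$-rational places among themselves, which is the claim. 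I expect the whole proof to be short once the maximality-forces-$\Phi=-q\cdot\mathrm{Id}$ input and the faithfulness of the Jacobian representation are in hand; the only genuine subtlety is the characteristic-$p$ rigidity, which is exactly why the statement is attributed to \cite{GSY}.
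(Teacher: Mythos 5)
The paper offers no proof of this lemma at all: it is quoted verbatim from \cite{GSY} (Prop.~3.8 and Thm.~3.10), so there is nothing internal to compare against. That said, your outline reproduces the standard argument, which is essentially the one in the cited reference: maximality forces the $q^2$-Frobenius $\Phi$ to act on the Jacobian as the scalar $-q$ (all eigenvalues are $-q$ and Frobenius is semisimple --- this last point is worth saying explicitly, since ``all eigenvalues equal $-q$'' alone does not give a scalar); hence $\Phi$ is central in $\mathrm{End}(\mathrm{Jac}(F))$, so $\sigma$ and $\Phi\sigma\Phi^{-1}$ induce the same endomorphism of the Jacobian; faithfulness of $\aut(F)\to\mathrm{Aut}(\mathrm{Jac}(F))$ for $g\ge 2$ then gives $\sigma=\Phi\sigma\Phi^{-1}$, i.e.\ $\sigma$ is defined over $\mathbb{F}_{q^2}$, and the statement about rational places follows formally from $\sigma\Phi=\Phi\sigma$ exactly as you write.

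The one step you should not leave as stated is the justification of faithfulness: ``an automorphism acting trivially on the Jacobian fixes all Weierstrass points and is therefore the identity'' is not a valid implication --- the hyperelliptic involution fixes all Weierstrass points without being the identity, and in characteristic $p$ the Weierstrass-point count is itself delicate. The correct routes are either Torelli (the hyperelliptic involution acts as $-1\ne\mathrm{id}$ on the Jacobian, so injectivity survives) or the Lefschetz fixed-point formula on $\ell$-adic cohomology, $\ell\ne p$: a nontrivial $\sigma$ acting trivially on $H^1_{\mathrm{et}}$ would have Lefschetz number $2-2g<0$, contradicting nonnegativity of the fixed-point count. Since you explicitly defer this rigidity step to \cite{GSY}, the overall architecture is sound, but the parenthetical Weierstrass-point argument should be replaced by one of these.
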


Lemma \ref{prankpel} can be used to ensure that a Sylow $p$-subgroup of a non-tame 
automorphism group of an $\mathbb{F}_{q^2}$-maximal function field $F$ fixes exactly one $\mathbb{F}_{q^2}$-rational place of $F$.
  
\begin{corollary}\label{1actionp2} Let $p>0$ be the characteristic of $\mathbb{F}_{q^2}$ and let $F$ be an $\mathbb{F}_{q^2}$-maximal function field of genus $g \geq 2$ such that $p \mid |\aut(F)|$. If $H$ is a $p$-subgroup of 
$\aut(F)$, then $H$ fixes exactly one place $P$ and $P$ is $\mathbb{F}_{q^2}$-rational. The subgroup $H$ acts semiregularly on the 
set of the remaining $\mathbb{F}_{q^2}$-rational places of $F$. 
Moreover, if $S_1$ and $S_2$ are distinct Sylow $p$-subgroups of $\aut(F)$ with fixed places $P_1$ and $P_2$ respectively, then $P_1\ne P_2$ and $S_1,S_2$ intersect trivially.
\end{corollary}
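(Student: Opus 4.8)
The plan is to reduce all three assertions to a single fact: \emph{every nontrivial $p$-element of $\aut(F)$ fixes exactly one place of $F$, and that place is $\mathbb{F}_{q^2}$-rational}. Granting this, the first statement follows for an arbitrary nontrivial $p$-subgroup $H$, because if $H$ fixed two distinct places then any nontrivial $\tau\in H$ would fix both; the semiregularity and the statements about distinct Sylow subgroups are then essentially formal. So I would organize the proof as existence (a counting argument), uniqueness (the crux), and assembly.

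\emph{Existence of a fixed rational place.} Since $F$ is $\mathbb{F}_{q^2}$-maximal, the number of its $\mathbb{F}_{q^2}$-rational places is $N=q^2+1+2gq$, and as $q$ is a power of $p$ we get $N\equiv 1\pmod p$. By Lemma~\ref{prankpel} the group $\aut(F)$, and hence $H$, permutes these $N$ places; as $H$ is a $p$-group every orbit has $p$-power length, so the number of $H$-fixed rational places is congruent to $N\equiv 1\pmod p$, and in particular is nonzero. Thus $H$ fixes at least one $\mathbb{F}_{q^2}$-rational place.

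\emph{Uniqueness — the main obstacle.} I expect the hard part to be showing that a $p$-element fixes \emph{at most} one place, since the congruence count gives only existence; this is exactly where maximality must be used. It suffices to treat an automorphism $\sigma$ of order $p$: a nontrivial $p$-element $\tau$ of order $p^a$ satisfies $\Fix(\tau)\subseteq\Fix(\tau^{p^{a-1}})$ with $\tau^{p^{a-1}}$ of order $p$. For such a $\sigma$, set $\bar F=\Fix(\langle\sigma\rangle)$; then $F/\bar F$ is a $\mathbb{Z}/p$-cover whose branch places are precisely the places fixed by $\sigma$, each totally (wildly) ramified. Writing $\gamma_F$ for the $p$-rank (Hasse--Witt invariant), the Deuring--Shafarevich formula for $\mathbb{Z}/p$-covers reads
\[
\gamma_F-1=p(\gamma_{\bar F}-1)+(p-1)k,
\]
where $k$ is the number of branch places. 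Now every $\mathbb{F}_{q^2}$-maximal function field has $p$-rank $0$: the eigenvalues of the $\mathbb{F}_{q^2}$-Frobenius all equal $-q$, hence have positive $p$-adic valuation and are not units, so $\gamma_F=0$. Moreover $\bar F$, being a subfield of the maximal field $F$, is itself maximal by the Kleiman--Serre result recalled in the introduction, so $\gamma_{\bar F}=0$ as well. Substituting gives $-1=-p+(p-1)k$, i.e. $k=1$, so $\sigma$ fixes exactly one place. Combined with existence, any nontrivial $p$-subgroup $H$ fixes exactly one place, which is the rational place found above.

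\emph{Assembly.} Semiregularity is then immediate: if $Q\neq P$ were a rational place fixed by some nontrivial $\tau\in H$, then $\tau$ would fix both $P$ (fixed by all of $H$) and $Q$, contradicting uniqueness; hence $H$ acts freely on the remaining rational places. For the final statement, let $S_1,S_2$ be distinct Sylow $p$-subgroups with fixed places $P_1,P_2$. If $P_1=P_2=:P$, then $S_1,S_2$ are $p$-subgroups of $\aut(F)_P$, hence both lie in its unique Sylow $p$-subgroup $\aut(F)_P^{(1)}$ by Lemma~\ref{highnormal}; since each $S_i$ is Sylow in $\aut(F)$ and $\aut(F)_P^{(1)}$ is a $p$-subgroup of $\aut(F)$, comparing orders forces $S_1=\aut(F)_P^{(1)}=S_2$, a contradiction, so $P_1\neq P_2$. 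Finally $S_1\cap S_2$ is a $p$-group, and any nontrivial element of it would fix both $P_1$ and $P_2$, impossible by uniqueness; hence $S_1\cap S_2=\{1\}$.
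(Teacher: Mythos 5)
Your proof is correct, and its skeleton matches the paper's: existence of a fixed rational place via the congruence $N\equiv 1\pmod p$ together with Lemma \ref{prankpel}, uniqueness via the fact that a nontrivial $p$-element of a $p$-rank-zero function field fixes exactly one place, and then the formal assembly. The one genuine difference is at the crux: the paper simply cites \cite[Lemma 11.129]{HKT} for that fact (after noting that maximality forces $p$-rank zero), whereas you prove it from scratch by reducing to an order-$p$ automorphism $\sigma$ and applying the Deuring--Shafarevich formula $\gamma_F-1=p(\gamma_{\bar F}-1)+(p-1)k$ with $\gamma_F=\gamma_{\bar F}=0$ to force $k=1$; your argument is sound (indeed the formula alone already forces $k=1$ and $\gamma_{\bar F}=0$ even without invoking Kleiman--Serre for the quotient), and it makes the proof self-contained at the cost of importing Deuring--Shafarevich and the Frobenius-eigenvalue characterization of the $p$-rank. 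Your handling of the Sylow statement is also slightly different: you place both $S_1,S_2$ inside the unique Sylow $p$-subgroup of $\aut(F)_P$ via Lemma \ref{highnormal} and compare orders, while the paper runs a conjugation argument ($\alpha^{-1}S_2\alpha=S_1$ forces $\alpha(P_1)=P_2$, and $P_1=P_2$ would make $\alpha$ normalize $S_1$); both are valid and essentially equivalent, yours being marginally more direct since it avoids having to observe first that $\alpha$ carries $P_1$ to $P_2$.
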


\begin{proof} Let $H$ be a $p$-subgroup of $\aut(F)$. By Lemma \ref{prankpel},  $H$ acts on the set $\mathcal{P}$ of $\mathbb{F}_{q^2}$-rational places of $F$. Since $F$ is $\mathbb{F}_{q^2}$-maximal, the size of $\mathcal{P}$ is congruent to $1$ modulo $p$. Hence $H$ fixes at least one place $P \in \mathcal{P}$.
On the other hand, being maximal, $F$ has $p$-rank zero. Then by \cite[Lemma 11.129]{HKT} any nontrivial element in $H$ fixes exactly one place of $F$.
Thus, $H$ itself fixes exactly one place $P \in \mathcal{P}$ and acts semiregularly on the remaining $\mathbb{F}_{q^2}$-rational places of $F$.
Let $S_1,S_2,P_1,P_2$ be as in the hypothesis. As $S_1,S_2$ are conjugate under $\aut(F)$, take $\alpha\in\aut(F)$ with $\alpha^{-1}S_2\alpha=S_1$. Then $\alpha(P_1)=P_2$. Suppose by contradiction $P_1=P_2$. Then $\alpha$ stabilizes $P_1$, and hence normalizes the Sylow $p$-subgroup $S_1$ of $\aut(F)_{P_1}$. Then $S_2=S_1$, a contradiction. From $P_1\ne P_2$ and \cite[Lemma 11.129]{HKT} it follows also $S_1\cap S_2=\{id\}$.
\end{proof}

If the bound $|G|>84(g(F)-1)$ is satisfied a lot can be said about the structure of the short orbits of $G$ on the places of $F$. The following theorem lists indeed all the possibilities.

\begin{theorem} \rm{\cite[Theorem 11.56 and Lemma 11.111]{HKT}} \label{Hurwitz}
 Let $F$ be an algebraic function field of genus $g \geq  2$ defined over a field $\K$ of characteristic $p$. Suppose that $G\leq\aut(F)$. Then:

\noindent
$(i)$ If $G$ has at least five short orbits, then $|G| \leq 4(g-1)$. 

\noindent
$(ii)$ If $G$ has four short orbits, then $|G| \leq 12(g-1)$.

\noindent
$(iii)$ If $G$ has exactly one short orbit, then the length of this orbit divides $2g-2$.

\noindent
$(iv)$ If $|G|>84(g-1)$, then $p>0$, ${\rm Fix}(G)$ is rational, and $G$ has at most three short orbits: 
\begin{enumerate}
\item exactly three short orbits, two tame of length $|G|/2$ and one non-tame, with $p \geq 3$;
\item or exactly two short orbits, both non-tame; 
\item or only one short orbit which is non-tame, whose length divides $2g-2$; 
\item or exactly two short orbits, one tame and one non-tame.
\end{enumerate}
\end{theorem}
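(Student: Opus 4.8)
The plan is to run the Hurwitz genus formula \eqref{eq1} through Hilbert's different formula \eqref{eq1bis}, organized as a sum over the orbits of $G$, and then to read off each assertion as a statement about a single rational quantity. First I would note that all places in a given $G$-orbit $o$ have conjugate stabilizers, hence a common ramification index $e_o=|G_P|$ and a common different exponent $d_o=d_P$; a short orbit then has length $|G|/e_o$, while long orbits are unramified and contribute nothing. Grouping the places of $F$ by orbits turns \eqref{eq1} into
\begin{equation}\label{eq:fund}
\frac{2g-2}{|G|}=2\bar g-2+\sum_{o}\frac{d_o}{e_o},
\end{equation}
the sum running over the short orbits. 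Since $g\ge 2$ the left-hand side is a fixed positive number, so bounding $|G|$ from above amounts to bounding the right-hand side from below. The elementary but decisive estimate is that every short orbit satisfies $d_o/e_o\ge 1/2$: in the tame case $d_o=e_o-1$ gives $d_o/e_o=1-1/e_o\ge 1/2$ because $e_o\ge 2$, while in the wild case Lemma~\ref{highnormal} forces $G_P^{(1)}\ne\{\Id\}$, whence Hilbert's formula yields $d_o\ge e_o$ and $d_o/e_o\ge 1$.

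For parts $(i)$ and $(ii)$ I would split on the genus $\bar g$ of $\Fix(G)$. When $\bar g\ge 1$ the term $2\bar g-2\ge 0$ is harmless and $k$ short orbits already give $\frac{2g-2}{|G|}\ge k/2$, far stronger than required. The substantive case is $\bar g=0$, where \eqref{eq:fund} reads $\frac{2g-2}{|G|}=-2+\sum_o d_o/e_o$ and I would minimize this positive quantity over admissible orbit configurations. With five short orbits the minimum is realized by five tame index-$2$ orbits and equals $1/2$, giving $|G|\le 4(g-1)$; with four orbits the all-index-$2$ configuration is excluded (it would force $g=1$), so the smallest positive value is attained by the signature $(2,2,2,3)$ and equals $1/6$, giving $|G|\le 12(g-1)$. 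Part $(iii)$ is the cleanest: with a single short orbit $o$ of length $\ell=|G|/e_o$, writing $|G|=\ell e_o$ and substituting into \eqref{eq1} gives $2g-2=\ell\,[\,e_o(2\bar g-2)+d_o\,]$, so that $\ell\mid 2g-2$ at once.

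For part $(iv)$ I would assume $|G|>84(g-1)$ and first rule out characteristic $0$: there all ramification is tame and the classical analysis of \eqref{eq:fund} yields $|G|\le 84(g-1)$, so $p>0$ and $G$ is non-tame by \cite[Theorem 11.56]{HKT}. The genus split above then shows $\bar g\ge 1$ is incompatible with $|G|>84(g-1)$, so $\Fix(G)$ is rational; part $(ii)$ forces at most three short orbits, and positivity of the left-hand side forces at least one. Rewriting $|G|>84(g-1)$ as $\frac{2g-2}{|G|}<1/42$ produces the tight window
\begin{equation}\label{eq:window}
2<\sum_{o}\frac{d_o}{e_o}<2+\frac{1}{42}.
\end{equation}
Since three tame orbits can sum to at most the classical value $2+\tfrac{1}{42}$, attained only by $(2,3,7)$ and corresponding to equality $|G|=84(g-1)$, at least one orbit must be wild.

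The heart of the matter, and where I expect the genuine obstacle to lie, is the classification of the orbit signatures compatible with \eqref{eq:window} into exactly the four configurations listed. The orbit counting of $(i)$--$(iii)$ only uses the leading different exponent, but here the window has width $1/42$, so one must compute the wild contributions $d_o/e_o$ precisely, exploiting the fine structure of the ramification filtration in Lemma~\ref{highnormal}: that $G_P^{(1)}$ is the Sylow $p$-subgroup, that $G_P/G_P^{(1)}$ is cyclic of order prime to $p$, and that each quotient $G_P^{(i)}/G_P^{(i+1)}$ is elementary abelian. Controlling the jumps in the filtration is what pins the possibilities down; for instance, two tame orbits of length $|G|/2$ must have index $2$, contributing $1$ to the sum and forcing $p\ge 3$, which together with a single remaining wild orbit whose contribution lies in $(1,1+\tfrac{1}{42})$ reproduces configuration $(a)$, while configurations $(b)$, $(c)$, $(d)$ arise from the remaining distributions of wild and tame orbits subject to \eqref{eq:window}. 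Carrying out this enumeration in full, rather than the genus-formula bookkeeping of the earlier parts, is the step I expect to require the most care.
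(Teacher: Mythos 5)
The paper does not prove this statement at all: it is quoted verbatim from \cite[Theorem 11.56 and Lemma 11.111]{HKT}, so there is no internal proof to compare against. Judged on its own terms, your sketch is the standard argument and is essentially sound. The orbit-wise form of the Hurwitz/Hilbert formula, the bounds $d_o/e_o=1-1/e_o\ge 1/2$ for tame short orbits and $d_o/e_o\ge 1$ for wild ones (via $G_P^{(1)}\ne\{\Id\}$ in Lemma~\ref{highnormal}), the genus split on $\bar g$, and the derivation of $(i)$--$(iii)$ and of ``$p>0$, $\Fix(G)$ rational, at most three and at least one short orbit'' in $(iv)$ are all correct.

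One remark on the step you flag as the main obstacle: the enumeration of configurations in $(iv)$ is in fact already forced by the bounds you have, and does not require precise control of the higher ramification jumps. With the window $2<\sum_o d_o/e_o<2+\tfrac{1}{42}$ in hand: three tame orbits are excluded because every tame triple has excess either $\le 0$ or $\ge \tfrac{1}{42}$ (with $(2,3,7)$ attaining equality, ruled out by strictness); two wild orbits among three would give $\ge 1+1+\tfrac12=\tfrac52$; and if exactly one of three orbits is wild, the two tame ones must both have $e_o=2$ (since $1+\tfrac12+\tfrac23>2+\tfrac1{42}$), which gives configuration (1) and forces $p\ge 3$ because an order-$2$ stabilizer in characteristic $2$ would be wild. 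Two short orbits with at least one wild split into configurations (2) and (4), and a single short orbit must be wild (a lone tame orbit contributes $<1$) with length dividing $2g-2$ by your part $(iii)$, giving configuration (3). So the only inputs needed are the crude lower bounds you already proved, not the fine structure of the filtration; your proposal is complete in all essentials.
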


A useful tool that we will use is the so-called Weierstrass semigroup $H(P)$ at a place $P$:
$$H(P):=\{i \in \mathbb{N} \mid \exists f \in F, (f)_\infty=iP \}.$$
It is well-known that $H(P)$ is a numerical semigroup. The Weierstrass gap theorem implies that the set of gaps $G(P):=\mathbb{N} \setminus H(P)$ of $P$ has cardinality $g$; see e.g. \cite[Theorem 1.6.8]{stichtenoth}. The following lemma provides a technique to compute gaps.

\begin{lemma}{\cite[Corollary 14.2.5]{VillaSalvador}} \label{gapsdiff}
 Let $F$ be an algebraic function field of genus $g$ defined over $\mathbb{K}$, $P$ be a rational place of $F$, and $n$ be a non-negative integer.
 Then $n$ is a gap at $P$ if and only if there exists a differential $\omega$ on $F$ such that $\omega$ is regular (that is, the divisor of $F$ is an effective divisor) and $n=v_P(\omega)+1$.
\end{lemma}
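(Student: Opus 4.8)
The plan is to derive the equivalence from the Riemann--Roch theorem together with the duality between Riemann--Roch spaces and spaces of differentials. Write $K$ for a canonical divisor of $F$ and $\ell(D)=\dim_{\mathbb{K}}\mathcal{L}(D)$ for the dimension of the Riemann--Roch space of a divisor $D$. For an integer $m\ge 0$ let $\Omega(mP)$ denote the space of differentials $\omega$ with $\operatorname{div}(\omega)\ge mP$ (together with $0$); since $m\ge 0$, every nonzero element of $\Omega(mP)$ is regular.

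First I would rephrase the gap condition. By definition, $n\ge 1$ is a gap at $P$ exactly when no function has pole divisor equal to $nP$, which is equivalent to $\ell(nP)=\ell((n-1)P)$. As $P$ is rational, Riemann--Roch gives $\ell(mP)=m+1-g+\ell(K-mP)$ for every $m$; subtracting the instances $m=n$ and $m=n-1$ yields
\[
\ell(nP)-\ell((n-1)P)=1+\bigl(\ell(K-nP)-\ell(K-(n-1)P)\bigr).
\]
Hence $n$ is a gap at $P$ if and only if $\ell(K-(n-1)P)=\ell(K-nP)+1$.

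Next I would translate these index-of-speciality terms into differentials. The duality theorem gives $\dim_{\mathbb{K}}\Omega(mP)=\ell(K-mP)$ for all $m\ge 0$, so the condition above reads $\dim\Omega((n-1)P)=\dim\Omega(nP)+1$, i.e.\ the inclusion $\Omega(nP)\subseteq\Omega((n-1)P)$ drops in dimension by exactly one. Since imposing one extra order of vanishing at the single place $P$ lowers the dimension by at most one, this is equivalent to the existence of a nonzero $\omega\in\Omega((n-1)P)\setminus\Omega(nP)$, that is, a regular differential with $v_P(\omega)=n-1$, equivalently $n=v_P(\omega)+1$. Combining the two equivalences proves the lemma; the boundary case $n=0$ is consistent, since $0$ is never a gap and no regular differential can have $v_P(\omega)=-1$.

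The computations here are routine; the one point requiring care is the bookkeeping of conventions, namely verifying the duality isomorphism in the normalization $\dim\Omega(mP)=\ell(K-mP)$ and confirming that passing from $(n-1)P$ to $nP$ cannot decrease the dimension by more than one. Both follow from the standard theory (for instance from the exact-sequence description of $\mathcal{L}(nP)/\mathcal{L}((n-1)P)$ via evaluation of leading coefficients at $P$), so the main effort is conceptual alignment rather than any substantial argument.
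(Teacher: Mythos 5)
Your argument is correct: the reduction of the gap condition to $\ell(K-(n-1)P)=\ell(K-nP)+1$ via Riemann--Roch, the translation into $\dim\Omega((n-1)P)=\dim\Omega(nP)+1$ by duality, and the observation that this drop of exactly one is equivalent to the existence of a regular differential with $v_P(\omega)=n-1$ are all sound, and the boundary case $n=0$ is handled properly. The paper does not prove this lemma at all --- it is quoted directly from \cite[Corollary 14.2.5]{VillaSalvador} --- so there is no in-paper argument to compare against; your proof is the standard one that the cited reference also follows, and nothing further is needed.
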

Places in the same $\aut(F)$-orbit have the same Weierstrass semigroup (\cite[Lemma 3.5.2]{stichtenoth}).

\subsection{Automorphisms of the Hermitian function field}\label{subsec:Hermitian}
Denote by $\mathcal{H}_q:=\mathbb{F}_{q^{2}}(x,y)$ the Hermitian function field over $\mathbb{F}_{q^{2}}$ where $y^{q+1}-x^{q+1}+1=0$. The functions $x$ and $y$ have $q+1$ distinct poles which we denote by $P_{\infty}^j$ with $j=1,\dots,q+1$. The other rational places are in one-to-one correspondence with the pairs $(a,b)$, where $a,b \in \FF_{q^2}$ satisfy $b^{q+1}-a^{q+1}+1=0$; we denote by $P_{(a,b)}$ the common zero of $x-a$ and $y-b$.
From \cite{MZ2016}, the group $\aut(\mathcal{H}_q)$ has a maximal subgroup $M_\ell$ of order $q(q-1)(q+1)^2$ which is isomorphic to $S_\ell \rtimes C_{q+1}$, where $C_{q+1}$ is a cyclic group of order $q+1$ and $S_\ell$ is the commutator subgroup of $M_\ell$, satisfying $S_\ell \cong\SL(2,q)$. An explicit description of $M_\ell$ and $S_\ell$ is given in \cite{MZcomm} in matrix representation:
$$M_\ell=\Bigg\{ \begin{pmatrix} a & b & 0 \\ c & d & 0 \\ 0 & 0 & 1\end{pmatrix} \mid a,b,c,d \in \mathbb{F}_{q^2}, \  d^{q+1}-b^{q+1}=1, \ a^{q+1}-c^{q+1}=1, -a^qb+c^qd=0 \Bigg \},$$
$$S_\ell=\Bigg\{ \begin{pmatrix} a & b & 0 \\ c & d & 0 \\ 0 & 0 & 1\end{pmatrix} \in M_\ell \mid ad-bc=1 \Bigg\} \cong \SL(2,q).$$

We will denote by $\alpha_{a,b,c,d}$ the automorphism of $\mathcal{H}_q$ associated to the matrix in $M_\ell$ with entries $a,b,c,d$; that is, $\alpha_{a,b,c,d}$ is determined by the equations
\[ \alpha_{a,b,c,d}(x)=ax+by, \qquad \alpha_{a,b,c,d}(y)=cx+dy.\]
Note that a direct computation gives, for any $\alpha_{a,b,c,d} \in M_\ell$, 
$$\alpha_{a,b,c,d}(y)^{q+1}-\alpha_{a,b,c,d}(x)^{q+1}+1=y^{q+1}-x^{q+1}+1,$$
so that $\alpha_{a,b,c,d}$ is indeed an automorphism of $\mathcal{H}_q=\mathbb{F}_{q^{2}}(x,y)$.
For any $\alpha_{a,b,c,d} \in M_\ell$, we have
\begin{eqnarray*}
c^{q(q+1)} & = & c^{q(q+1)}(d^{q+1}-b^{q+1}) =  (c^qd)^{q+1}-(bc^q)^{q+1}\\
& = & (a^qb)^{q+1}-(bc^q)^{q+1} =  b^{q+1}(a^{q+1}-c^{q+1})^q =  b^{q+1} 
\end{eqnarray*}
and similarly $a^{q(q+1)}=d^{q+1}$.
Together with $a^qb=c^qd$, this means that for any $\alpha_{a,b,c,d} \in M_\ell$ there exists $\zeta \in \mathbb{F}_{q^2}$ such that $\zeta^{q+1}=1$, $b=\zeta c^q$, and $d=\zeta a^q.$ In particular, $b=c^q$ and $a=d^q$ for every $\alpha_{a,b,c,d} \in S_\ell.$ Thus,
\begin{equation}\label{eq:Ml}
M_\ell=\Bigg\{ \begin{pmatrix} a & \zeta c^q & 0 \\ c & \zeta a^q & 0 \\ 0 & 0 & 1\end{pmatrix} : a,c \in \mathbb{F}_{q^2}, \  a^{q+1}-c^{q+1}=1, \zeta^{q+1}=1 \Bigg \},
\end{equation}
\begin{equation}\label{eq:Sl}
S_\ell=\Bigg\{ \begin{pmatrix} a & c^q & 0 \\ c & a^q & 0 \\ 0 & 0 & 1\end{pmatrix} : a,c \in \mathbb{F}_{q^2}, \  a^{q+1}-c^{q+1}=1 \Bigg\}.
\end{equation}

The group $S_\ell$ has $q(q^2-1)$ elements and hence any of its Sylow $p$-subgroups has order $q$. One possible $p$-Sylow subgroup of $S_\ell$ is the following:
\begin{equation}\label{eq:Eq}
E_q=\Bigg\{ \begin{pmatrix} a & a-1 & 0 \\ -a+1 & -a+2 & 0 \\ 0 & 0 & 1\end{pmatrix} : a \in \mathbb{F}_{q^2}, \  a^{q}+a=2 \Bigg\}.
\end{equation}
Indeed, it is not hard to verify that $E_q$ is elementary abelian, of order $q$, and contained in $S_\ell$; it is enough to choose $c=-a+1$ and use that $a^q+a=2$.

With this new description, we describe the fixed fields ${\rm Fix}(M_\ell)$ and ${\rm Fix}(S_\ell)$ of $M_\ell$ and $S_\ell$.

\begin{proposition}\label{prop:fixedfieldsH_q}
Denote by $\mathcal{H}_q:=\mathbb{F}_{q^{2}}(x,y)$ the Hermitian function field over $\mathbb{F}_{q^{2}}$ where $y^{q+1}-x^{q+1}+1=0$. Let $\rho:=x+y$ and $u:=y(x^{q^2}-x)/(x^{q+1}-1)=(x^{q^2}-x)/y^q$. Then 
\[\mathrm{Fix}(E_q)=\FF_{q^2}(\rho), \quad \mathrm{Fix}(S_\ell)=\FF_{q^2}(u), \quad \text{and} \quad \mathrm{Fix}(M_\ell)=\FF_{q^2}(u^{q+1}).\]
\end{proposition}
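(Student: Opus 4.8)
The overall strategy is the standard index/containment argument: for a non-constant $t\in\mathcal{H}_q$ one has $[\mathcal{H}_q:\mathbb{F}_{q^2}(t)]=\deg(t)_\infty$, while $[\mathcal{H}_q:\mathrm{Fix}(G)]=|G|$ for every $G\le\aut(\mathcal{H}_q)$. So for each pair $(G,t)$ among $(E_q,\rho)$, $(S_\ell,u)$ and $(M_\ell,u^{q+1})$ I will first check $t\in\mathrm{Fix}(G)$, giving $\mathbb{F}_{q^2}(t)\subseteq\mathrm{Fix}(G)$, and then compute $\deg(t)_\infty$ and match it with $|G|$; equality of the degrees forces $\mathbb{F}_{q^2}(t)=\mathrm{Fix}(G)$. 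Every degree computation rests on the description of the $q+1$ places at infinity: homogenizing to $X^{q+1}-Y^{q+1}=Z^{q+1}$, they are the unramified places over $x=\infty$ indexed by $\eta^{q+1}=1$ via $y/x\to\eta$, and at each of them both $x$ and $y$ have a simple pole.

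For $E_q$, substituting $c=1-a$ and $a^q+a=2$ into $\alpha_{a,a-1,-a+1,-a+2}$ shows at once that $x+y$ is fixed, so $\rho\in\mathrm{Fix}(E_q)$. The function $\rho$ can only have poles at the places at infinity, and near the place with $y/x\to\eta$ one has $\rho\sim(1+\eta)x$; as the poles are simple, this leading term is decisive, so the pole survives except at the unique place with $\eta=-1$ (and $(-1)^{q+1}=1$ in every characteristic). Hence $\deg(\rho)_\infty=q=|E_q|$ and $\mathrm{Fix}(E_q)=\mathbb{F}_{q^2}(\rho)$.

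For $S_\ell$, I verify $\sigma(u)=u$ for $\sigma=\alpha_{a,c^q,c,a^q}$ directly. Using $a^{q^2}=a$, $c^{q^2}=c$ one finds $\sigma(x^{q^2}-x)=a(x^{q^2}-x)+c^q(y^{q^2}-y)$ and $\sigma(y)^q=c^qx^q+ay^q$; clearing denominators in $\sigma(u)=u$ and cancelling the common term $ay^q(x^{q^2}-x)$ reduces the claim (for $c\neq0$, the case $c=0$ being immediate) to the single identity $x^q(x^{q^2}-x)=y^q(y^{q^2}-y)$. This is the conceptual heart of the proof, and it holds on $\mathcal{H}_q$ because raising $y^{q+1}=x^{q+1}-1$ to the $q$-th power yields $y^{q^2+q}=x^{q^2+q}-1$, whence $y^{q^2+q}-y^{q+1}=x^{q^2+q}-x^{q+1}$. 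For the degree I note that at each place at infinity the valuation of $u=(x^{q^2}-x)/y^q$ equals $-q^2-q(-1)=-q(q-1)$, contributing pole degree $q(q-1)(q+1)=q(q^2-1)$; the alternative expression $u=y(x^{q^2}-x)/(x^{q+1}-1)$ then shows there are no further poles, since the only other candidates are the places with $x=\zeta$, $\zeta^{q+1}=1$ (equivalently $y=0$), where the numerator and denominator of the fraction vanish to the same order and the remaining factor $y$ makes $u$ vanish. Thus $\deg(u)_\infty=q(q^2-1)=|S_\ell|$ and $\mathrm{Fix}(S_\ell)=\mathbb{F}_{q^2}(u)$.

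Finally, for $M_\ell$ I use that $S_\ell\trianglelefteq M_\ell$ with $M_\ell/S_\ell\cong C_{q+1}$, this quotient being realized by the automorphisms $\tau_\zeta\colon x\mapsto x,\ y\mapsto\zeta y$ attached to $\diag(1,\zeta)$ with $\zeta^{q+1}=1$; since $S_\ell\cap\langle\tau_\zeta\rangle=\{id\}$ and the orders multiply to $q(q^2-1)(q+1)=|M_\ell|$, these generate $M_\ell$. Because $\tau_\zeta(u)=\zeta^{-q}u=\zeta u$, the quotient acts on $\mathrm{Fix}(S_\ell)=\mathbb{F}_{q^2}(u)$ by scaling $u$ by the $(q+1)$-th roots of unity (which lie in $\mathbb{F}_{q^2}$), so $\mathrm{Fix}(M_\ell)=\mathbb{F}_{q^2}(u)^{C_{q+1}}=\mathbb{F}_{q^2}(u^{q+1})$, the index again matching $|M_\ell|$. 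I expect the divisor computation for $u$ to be the main obstacle: one must make sure the apparent poles at the common zeros of $y$ and $x^{q+1}-1$ genuinely cancel, which is exactly what the second expression for $u$ secures.
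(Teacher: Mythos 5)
Your proposal is correct and follows essentially the same route as the paper: establish the containments $\FF_{q^2}(t)\subseteq\mathrm{Fix}(G)$ by invariance checks resting on the curve relation $x^{q+1}=y^{q+1}+1$ (your identity $x^q(x^{q^2}-x)=y^q(y^{q^2}-y)$ is exactly the relation the paper uses to rewrite $(x^{q^2}-x)(c^qx^q+d^qy^q)$ as $y^q(\cdots)$), and then match field degrees against $|G|$ via the pole divisor of $u$. The only cosmetic differences are that for $E_q$ the paper gets the degree from the relation $(x/\rho)^q+(x/\rho)=(\rho^{q+1}+1)/\rho^{q+1}$ rather than from computing $(\rho)_\infty$, and for $M_\ell$ it computes the pole degree of $u^{q+1}$ directly instead of passing through the $C_{q+1}$-action $u\mapsto\zeta u$ on $\FF_{q^2}(u)$.
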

\begin{proof}
Clearly $E_q$ fixes $\rho$, whence $\FF_{q^2}(\rho) \subseteq \mathrm{Fix}(E_q)$. On the other hand, by direct computation, $(x/\rho)^q+(x/\rho)=(\rho^{q+1}+1)/\rho^{q+1}$.
Since $|E_q|=q$, this implies that $\FF_{q^2}(\rho) = \mathrm{Fix}(E_q)$. 
To prove the rest of the claim, we start by investigating the effect of applying an automorphism from $M_\ell$ on $u$. We have
\begin{equation}\label{eq:alpha}\alpha_{a,b,c,d}(u)=(cx+dy) \bigg( \frac{ax^{q^2}+by^{q^2}-ax-by}{(cx+dy)^{q+1}}\bigg)=\bigg( \frac{ax^{q^2}+by^{q^2}-ax-by}{c^qx^q+d^qy^q}\bigg).\end{equation}
Observe that \begin{align*}
(x^{q^2}-x)(c^qx^q+d^qy^q)&=c^qx^{q^2+q}+d^qx^{q^2}y^q-c^qx^{q+1}-d^qy^qx\\
&=c^q(y^{q+1}+1)^q+d^qx^{q^2}y^q-c^q(y^{q+1}+1)-d^qy^qx\\
&=c^qy^{q^2+q}+d^qx^{q^2}y^q-c^qy^{q+1}-d^qy^qx\\
&=y^q(c^qy^{q^2}+d^qx^{q^2}-c^qy-d^qx).
\end{align*}
Combining this with Equation \eqref{eq:alpha}, we see that
$$\frac{\alpha_{a,b,c,d}(u)}{u}=\frac{y^q(ax^{q^2}+by^{q^2}-ax-by)}{(x^{q^2}-x)(c^qx^q+d^qy^q)}=\frac{ax^{q^2}+by^{q^2}-ax-by}{c^qy^{q^2}+d^qx^{q^2}-c^qy-d^qx}=\zeta,$$
where in the last equality we used that $b=\zeta c^q$ and $a=a^{q^2}=d^q/\zeta^q=\zeta d^q.$ In particular Equation \eqref{eq:Sl} now implies that $\alpha_{a,b,c,d}(u)=u$ if and only if $\alpha_{a,b,c,d} \in S_\ell$. Hence $u \in \mathrm{Fix}(S_\ell)$ and $u^{q+1} \in \mathrm{Fix}(M_\ell)$. On the other hand, computing the divisor of $u$ in $\mathcal{H}_q$, we find that
\begin{equation}\label{eq:div_u}
   (u)_{\mathcal{H}_q}=  \left( y \frac{x^{q^2} - x}{x^{q+1} - 1} \right)_{\mathcal{H}_q} = \sum_{a^{q+1} = 1} P_{(a,0)} + \sum_{\substack{a \in \Fqq \\ a^{q+1} \neq 1}} \sum_{b^{q+1} = a^{q+1} - 1} P_{(a,b)} - (q^2-q) \cdot \sum_{j=1}^{q+1} P_{\infty}^j.
\end{equation}
Thus, the pole divisor of $u$ has degree $|S_\ell|$, whence $[\FF_{q^2}(x,y):\FF_{q^2}(u)]=|S_\ell|$ and $[\FF_{q^2}(x,y):\FF_{q^2}(u^{q+1})]=(q+1)|S_\ell|=|M_\ell|$. Then $\Fqq(u)=\mathrm{Fix}(S_\ell)$ and $\Fqq(u^{q+1})=\mathrm{Fix}(M_\ell)$.
\end{proof}

In later sections we will work with function fields with constant field $\FF_{q^{2n}}$, with $n\ge 3$ an odd integer. We will therefore on occasion work with the function field $\mathcal{H}_q^{(n)}:=\FF_{q^{2n}}\mathcal{H}_q=\FF_{q^{2n}}(x,y)$ obtained from $\mathcal{H}_q$ by extending its constant field from $\Fqq$ to $\FF_{q^{2n}}$.

\subsection{The GGS function field and its subfields $\mathcal{Y}_{n,s}$}\label{subsec:Yns}

Let $n \ge 3$ be an odd integer. The G\"uneri-Garcia-Stichtenoth (GGS) maximal function field $\mathcal{C}_n$ over $\mathbb{F}_{q^{2n}}$ was introduced in \cite{GGS} as a generalization of the Giulietti-Korchmaros (GK) maximal function field over $\mathbb{F}_{q^6}$. More precisely, writing $m=(q^n+1)/(q+1)$, one has $\mathcal{C}_n=\mathbb{F}_{q^{2n}}(x,y,z)$, where
\begin{equation*} 
z^{m}=y^{q^2}-y,\qquad y^{q+1}=x^q+x.
\end{equation*}
A family of Galois subfields $\mathcal{Y}_{n,s}$ of $\mathcal{C}_n$ was introduced and studied by Tafazolian, Teh\'eran-Herrera and Torres in \cite{TTT}. here we list a few facts on $\mathcal{Y}_{n,s}$ from \cite{TTT}, for future use.

Let $s$ be a divisor of $m=\frac{q^n+1}{q+1}$. The function field $\mathcal{Y}_{n,s}$ is defined as $\FF_{q^{2n}}(x,y,z)$ where
\begin{equation} \label{Yns}
 \left\{ \begin{array}{rcl}
z^{m/s}=y^{q^2}-y,\\
y^{q+1}=x^q+x.
\end{array}\right.
\end{equation}
It is an $\FF_{q^{2n}}$-maximal function field and it has genus 
\begin{equation}\label{eq:genusYns}
g(\mathcal{Y}_{n,s})=\dfrac{q^{n+2}-q^n - sq^3 + q^2+s-1}{2s}.
\end{equation}
The GGS function field is included in this family, since $\mathcal{Y}_{n,1}=\mathcal{C}_n$.
The following result concerning the automorphism groups of $\mathcal{Y}_{n,s}$ is from \cite{MTZ1}. 
\begin{theorem}\label{th:TMZ1} 
Let $q$ be a prime power, $n \geq 3$ odd, $m:=(q^n+1)/(q+1)$, and $s$ a divisor of $m$ with $s\ne m$.
If $3\nmid n$ or $\frac{m}{s}\nmid(q^2-q+1)$, then ${\rm Aut}(\mathcal{Y}_{n,s})$ has order $q^3(q^2-1)m/s$ and is isomorphic to $S_{q^3}\rtimes C_{(q^2-1)\frac{m}{s}}$.
    If $3\mid n$ and $\frac{m}{s}\mid (q^2-q+1)$, then ${\rm Aut}(\mathcal{Y}_{n,s})$ has order $(q^3+1)q^3(q^2-1)m/s$ and is isomorphic to ${\PGU}(3,q)\rtimes C_{m/s}$. 
\end{theorem}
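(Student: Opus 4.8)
The plan is to determine $\aut(\mathcal{Y}_{n,s})$ by first building a concrete subgroup that fixes the place at infinity, and then deciding, under the stated arithmetic hypotheses, whether the whole group fixes that place or whether extra automorphisms appear. Throughout write $p$ for the characteristic of $\mathbb{F}_{q^{2n}}$ and let $P_\infty$ be the place of $\mathcal{Y}_{n,s}$ that is the common pole of $x$, $y$ and $z$; note $m/s>1$ since $s\neq m$.

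I would first construct the subgroup $\mathcal{G}=S_{q^3}\rtimes C_{(q^2-1)m/s}$. The automorphisms of the Hermitian base $y^{q+1}=x^q+x$ fixing its point at infinity form a group $S_{q^3}\rtimes C_{q^2-1}$ of order $q^3(q^2-1)$, with Sylow $p$-subgroup $S_{q^3}$ of order $q^3$ (this is the stabiliser of a point in $\PGU(3,q)$, cf. Section~\ref{subsec:Hermitian}). Setting $w=y^{q^2}-y$ so that the defining relation reads $z^{m/s}=w$, I would check which of these automorphisms $\sigma$ lift to $\mathcal{Y}_{n,s}$: this is the Kummer lifting condition that $\sigma(w)/w$ be an $(m/s)$-th power times a constant, equivalently that $\sigma$ preserve the divisor class of $w$ modulo $m/s$. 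Adjoining the maps $z\mapsto\zeta z$ with $\zeta^{m/s}=1$ fixing $x$ and $y$, the lifts generate a subgroup of $\aut(\mathcal{Y}_{n,s})_{P_\infty}$ of order $q^3(q^2-1)m/s$ whose cyclic complement has order $(q^2-1)m/s$.

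Next I would show $\mathcal{G}$ is the entire stabiliser $\aut(\mathcal{Y}_{n,s})_{P_\infty}$. By Lemma~\ref{highnormal} this stabiliser has the form $S\rtimes H$ with $S$ a $p$-group and $H$ cyclic; since $\mathcal{Y}_{n,s}$ is maximal, Corollary~\ref{1actionp2} gives that $S$ fixes no place other than $P_\infty$. Computing the different exponent of $P_\infty$ via Hilbert's formula \eqref{eq1bis} and inserting it, together with the genus \eqref{eq:genusYns}, into the Hurwitz genus formula \eqref{eq1} for the extension $\mathcal{Y}_{n,s}/\Fix(S\rtimes H)$ should force $|S|=q^3$ and $|H|=(q^2-1)m/s$, so that $\aut(\mathcal{Y}_{n,s})_{P_\infty}=\mathcal{G}$.

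The crux is to decide whether $\aut(\mathcal{Y}_{n,s})$ fixes $P_\infty$. A direct check from \eqref{eq:genusYns} gives $|\mathcal{G}|>84(g(\mathcal{Y}_{n,s})-1)$, so Theorem~\ref{Hurwitz}(iv) applies to any overgroup. If $\aut(\mathcal{Y}_{n,s})$ fixes $P_\infty$, then $\aut(\mathcal{Y}_{n,s})=\mathcal{G}$, the generic alternative. If not, let $o$ be its orbit on $P_\infty$; then $|o|>1$ and, by Corollary~\ref{1actionp2}, the members of $o$ are the distinct fixed places of the conjugate Sylow $p$-subgroups. Here $o$ is a non-tame short orbit, and combining the divisibility of its length from Theorem~\ref{Hurwitz}(iv) with orbit--stabiliser ($|o|=|\aut(\mathcal{Y}_{n,s})|/|\mathcal{G}|$) should force $|o|=q^3+1$ and identify the group induced on $o$ as $\PGU(3,q)$ acting $2$-transitively with point stabiliser $S_{q^3}\rtimes C_{q^2-1}$, whence $\aut(\mathcal{Y}_{n,s})=\PGU(3,q)\rtimes C_{m/s}$. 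The hard part will be this non-fixing analysis: proving that a moved $P_\infty$ forces exactly $|o|=q^3+1$, identifying the induced degree-$(q^3+1)$ group as precisely $\PGU(3,q)$ (neither a proper subgroup nor a larger overgroup), and matching this geometric dichotomy to the arithmetic one. Concretely, $3\mid n$ ensures $q^2-q+1\mid m$, which produces a GK-type intermediate subfield carrying the $\PGU(3,q)$-action, while $m/s\mid q^2-q+1$ is exactly the Kummer lifting condition letting this action extend to $\mathcal{Y}_{n,s}$; ruling out, in the generic case, any Theorem~\ref{Hurwitz}(iv) configuration that would move $P_\infty$ is where \eqref{eq1} and Corollary~\ref{1actionp2} must be pushed hardest.
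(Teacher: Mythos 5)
First, a point of context: the paper does not prove Theorem~\ref{th:TMZ1} itself; it is quoted from \cite{MTZ1}. The fair comparison is therefore with the paper's proof of the analogous statement for the BM subfields, Theorem~\ref{mainYtilde}, which follows exactly the skeleton you propose: build the candidate group, show it is the full stabiliser of the distinguished place(s), and then decide whether the full automorphism group moves them, with the dichotomy governed by $3\mid n$ and $\frac{m}{s}\mid(q^2-q+1)$. Your outline of that skeleton is correct, and your observation that $|\mathcal{G}|>84(g-1)$ puts you into Theorem~\ref{Hurwitz}(iv) is the right entry point.

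However, the two steps you flag as ``should force'' are precisely where the entire difficulty lies, and your plan does not contain the ideas needed to carry them out. (a) To rule out, in the generic case, a configuration in which the orbit $o$ of $P_\infty$ strictly contains $\{P_\infty\}\cup\{\text{zeros of }z\}$, the Hurwitz formula and Corollary~\ref{1actionp2} alone are not enough: in the paper's Proposition~\ref{normalizzanoTilde} this requires a long case analysis on the tame/non-tame short orbit structure \emph{plus} an explicit Weierstrass-gap computation (constructing a regular differential $f\,dz$ via Lemma~\ref{gapsdiff} to show that a specific non-gap at $P_\infty$ is a gap at the zeros of $z$). Your proposal never invokes Weierstrass semigroups or regular differentials, so the case in which $o$ absorbs a long $\mathcal{G}$-orbit, or coincides with $\{P_\infty\}\cup\{\text{zeros of }z\}$ outside the exceptional arithmetic range, is not actually excluded. (b) Your claim that ``$m/s\mid q^2-q+1$ is exactly the Kummer lifting condition'' is the right intuition but is not a proof: the paper's Proposition~\ref{fineTILDE} establishes the corresponding fact by computing the order of the divisor class of (zeros of $z$ at $x=0$) $-$ (poles of $z$) in the Picard group and then ruling out $d<m/s$ via the known gap sequence at infinity of the ambient curve (\cite[Theorem 1.1]{MPL}). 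Without an argument of this kind you cannot show that the $\PGU(3,q)$ extension \emph{fails} to exist when $3\nmid n$ or $\frac{m}{s}\nmid(q^2-q+1)$, which is the content of the first half of the theorem. Finally, identifying the induced degree-$(q^3+1)$ group as $\PGU(3,q)$ should go through showing $C_{m/s}$ is normal and that the quotient acts faithfully on the Hermitian function field $\mathcal{H}_q$ below (as in Lemma~\ref{normactionTILDE}), together with the maximality of the stabiliser of the relevant point set in $\PGU(3,q)$; an order count alone, as in your sketch, does not pin down the group.
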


\section{The function field $\tilde{\mathcal{Y}}_{n,s}$ and its automorphism group}\label{sec:Y}

Through the rest of the paper, let $q$ be a power of a prime $p$, $n \ge 3$ be an odd integer, and $m:=(q^n+1)/(q+1)$. The so-called BM function field $\tilde{C}_n:=\FF_{q^{2n}}(x,y,z)$, where
\begin{equation*}
z^{m}=y\bigg( \frac{x^{q^2}-x}{x^{q+1}-1} \bigg) \quad \text{and} \quad y^{q+1}=x^{q+1}-1,
\end{equation*}
is an $\mathbb{F}_{q^{2n}}$-maximal second generalization, given in \cite{BM}, of the GK maximal function field. 
In this section we construct the analogue $\tilde{\mathcal{Y}}_{n,s}$ of the function field $\mathcal{Y}_{n,s}$ above, as s Galois subfield of the BM function field $\tilde{C}_n$ instead of the GGS function field $\mathcal{C}_n$.
Computing its genus and automorphism group will allows us to provide new examples of $\mathbb{F}_{q^{2n}}$-maximal function fields that are not isomorphic to subfields of $\mathcal{H}_{q^n}$.
The non-isomorphism property will follow immediately from the value of the genus of $\tilde{\mathcal{Y}}_{n,s}$; while $\tilde{\mathcal{Y}}_{n,s}$ being a new example, will follow from the structure of its automorphism group, which is an invariant under isomorphism and hence will allow us to distinguish $\tilde{\mathcal{Y}}_{n,s}$ from the known examples with the same genus.

\subsection{The function field $\tilde{\mathcal{Y}}_{n,s}$ and its automorphism group.}

Let $s$ be a divisor of $m=(q^n+1)/(q+1)$.
The function field $\tilde{\mathcal{Y}}_{n,s}$ is defined as $\FF_{q^{2n}}(x,y,z)$ with
\begin{equation} \label{tildeYns}
\left\{ \begin{array}{rcl}
z^{m/s}=y\bigg( \frac{x^{q^2}-x}{x^{q+1}-1} \bigg),\\
y^{q+1}=x^{q+1}-1.
\end{array}\right.
\end{equation}
The function field $\tilde{\mathcal{Y}}_{n,s}$ is $\FF_{q^{2n}}$-maximal as a Galois subfield of the BM function field $\tilde{\mathcal{C}}_n$, and generalizes it in the sense that $\mathcal{\mathcal{Y}}_{n,1}= \tilde{\mathcal{C}}_n$. The Galois group of $\tilde{\mathcal{C}}_n/\tilde{\mathcal{Y}}_{n,s}$ is cyclic of order $s$ and is by the automorphism $(x,y,z) \mapsto (x,y,\lambda z)$ of $\tilde{\mathcal{C}}_n$, where $\lambda$ is a primitive $s$-th root of unity.
The genus of $\tilde{\mathcal{Y}}_{n,s}$ turns out to be the same genus as the genus of ${\mathcal{Y}}_{n,s}$.

\begin{proposition} \label{genusYtilde}
We have $g(\tilde{\mathcal{Y}}_{n,s})=g(\mathcal{Y}_{n,s})=\dfrac{q^{n+2}-q^n - sq^3 + q^2+s-1}{2s}.$
\end{proposition}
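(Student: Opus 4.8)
The plan is to realize $\tilde{\mathcal{Y}}_{n,s}$ as a tame cyclic Kummer cover of the Hermitian function field and then apply the Hurwitz genus formula \eqref{eq1}. Writing $N:=m/s$, the defining equations \eqref{tildeYns} exhibit $\tilde{\mathcal{Y}}_{n,s}=\mathbb{F}_{q^{2n}}(x,y,z)$ as the extension $z^N=u$ of $\mathcal{H}_q^{(n)}=\mathbb{F}_{q^{2n}}(x,y)$, where $y^{q+1}=x^{q+1}-1$ and $u=y(x^{q^2}-x)/(x^{q+1}-1)$ is precisely the function appearing in Proposition \ref{prop:fixedfieldsH_q}. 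Since the genus is invariant under constant field extension and may be computed over the algebraic closure $\mathbb{K}=\bar{\mathbb{F}}_q$, where the required $N$-th roots of unity are available (in fact they already lie in $\mathbb{F}_{q^{2n}}$, as $N\mid q^n+1\mid q^{2n}-1$), I would work there. As $u$ has a simple zero by \eqref{eq:div_u}, it is not a proper power, so $[\tilde{\mathcal{Y}}_{n,s}:\mathcal{H}_q^{(n)}]=N$ and the cover is cyclic of degree $N$.

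Next I would read off the ramification from the divisor \eqref{eq:div_u}. There $u$ has $q(q^2-1)$ simple zeros (the rational places $P_{(a,0)}$ with $a^{q+1}=1$ and $P_{(a,b)}$ with $a^{q+1}\ne 1$) and $q+1$ poles $P_\infty^j$, each of order $q^2-q=q(q-1)$; at every other place $v_P(u)=0$. In the Kummer cover a place $P$ ramifies exactly when $N\nmid v_P(u)$, with ramification index $N/\gcd(N,v_P(u))$. At the simple zeros this gives $e=N$ at once. The crucial arithmetic input is that $\gcd(N,q(q-1))=1$: since $n$ is odd one has $m=q^{n-1}-q^{n-2}+\cdots+1\equiv 1\pmod q$ and likewise $m\equiv 1\pmod{q-1}$, so $\gcd(m,q(q-1))=1$, and a fortiori $\gcd(N,q(q-1))=1$ for the divisor $N$ of $m$. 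Consequently the poles, where $v_P(u)=-q(q-1)$, are also totally ramified, and no further place ramifies. In particular $\gcd(N,p)=1$, so the cover is tame.

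It then remains to assemble the Hurwitz formula. All $q(q^2-1)+(q+1)=q^3+1$ ramified places are totally ramified, each carrying a single place of $\tilde{\mathcal{Y}}_{n,s}$ above it with tame different exponent $d_Q=N-1$. Using $g(\mathcal{H}_q^{(n)})=q(q-1)/2$, equation \eqref{eq1} becomes
$$2g(\tilde{\mathcal{Y}}_{n,s})-2=N\big(q(q-1)-2\big)+(q^3+1)(N-1).$$
Finally I would simplify, factoring $q^3+q^2-q-1=(q-1)(q+1)^2$ and using $N(q+1)=(q^n+1)/s$ to rewrite $N(q-1)(q+1)^2=(q^n+1)(q^2-1)/s$; expanding $(q^n+1)(q^2-1)=q^{n+2}-q^n+q^2-1$ then yields $g(\tilde{\mathcal{Y}}_{n,s})=\big(q^{n+2}-q^n-sq^3+q^2+s-1\big)/(2s)$, which coincides with the value \eqref{eq:genusYns} of $g(\mathcal{Y}_{n,s})$ and proves the claim.

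The step I expect to require the most care is the coprimality $\gcd(N,q(q-1))=1$: it is exactly this fact, rather than the mere shape of the divisor of $u$, that forces the poles to be totally ramified and hence produces the clean count of $q^3+1$ totally ramified places. Everything downstream is a routine simplification.
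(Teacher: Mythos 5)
Your proposal is correct and follows essentially the same route as the paper: both realize $\tilde{\mathcal{Y}}_{n,s}$ as the Kummer extension $z^{m/s}=u$ of $\mathcal{H}_q^{(n)}$, use Equation \eqref{eq:div_u} together with $\gcd(m/s,\,q^2-q)=1$ to conclude that exactly the $q^3+1$ places in the support of $(u)$ are totally ramified, and then apply the Hurwitz genus formula. Your extra remarks (the simple zero guaranteeing the degree is exactly $m/s$, and the explicit verification that $m\equiv 1 \pmod{q}$ and $\pmod{q-1}$) only make explicit what the paper leaves implicit.
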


\begin{proof}
 Let $M:= m/s = (q^n+1)/(s(q+1))$. The Hermitian function field $\mathcal{H}_q = \Fqq(x,y)$ with $y^{q+1} = x^{q+1} - 1$ is a subfield of $\tilde{\mathcal{Y}}_{n,s}$. 
In the constant field extension $\mathcal{H}_q^{(n)} = \FF_{q^{2n}}\mathcal{H}_q$, the $q^3+1$ rational places of $\mathcal{H}_q$ extend uniquely to $q^3+1$ rational places of $\mathcal{H}_q^{(n)}$; we will use the same notation for them as in Subsection \ref{subsec:Hermitian}. This convention yields that Equation \eqref{eq:div_u} is also valid over $\mathcal{H}_q^{(n)}$. Equation \eqref{eq:div_u} and the defining equation of $\tilde{\mathcal{Y}}_{n,s}$, imply that the extension $\tilde{\mathcal{Y}}_{n,s}$ is a Kummer extension of $\mathcal{H}_q^{(n)}$ of degree $M$. As $q^2-q$ is relatively prime to $m$ we also have $\gcd(M, q^2-q) = 1$, showing that all places in the support of the divisor of $u$ are totally ramified under $\tilde{\mathcal{Y}}_{n,s}$. Using that $g(\mathcal{H}_q^{(n)}) = q(q-1)/2$, the genus of $\tilde{\mathcal{Y}}_{n,s}$ and the claim then follow directly from the Hurwitz genus formula and Equation \eqref{eq:genusYns}:
\begin{align*}
     g(\tilde{\mathcal{Y}}_{n,s}) &= 1 + M \left( \frac{q(q-1)}{2} - 1 \right) + \frac{1}{2}(M-1)(q^3 +1) \\
     &= \frac{q^{n+2} - q^n - sq^3 + q^2 + s - 1}{2s} = g(\mathcal{Y}_{n,s}).
\end{align*}
\end{proof}

From the proof of Proposition \ref{genusYtilde}, one can immediately deduce that every $\FF_{q^2}$-rational place $P$ of $\mathcal{H}_q$ is lying under a unique place of $\tilde{\mathcal{Y}}_{n,s}$. In case $P=P_\infty^j$, we denote such place by $\tilde{P}_\infty^j$. In case $P=P_{(a,b)}$ is an $\Fqq$-rational place of $\mathcal H_q$, we denote such place with $\tilde{P}_{(a,b,0)}$, since Equation \eqref{tildeYns} implies that the $z$-coordinate of that place is zero (i.e. $\tilde{P}_{(a,b,0)}$ is a zero of $z$). We define the two following sets of places of $\tilde{\mathcal{Y}}_{n,s}$:
\begin{equation}\label{eq:orbit_infzero}
O_\infty:=\{\tilde{P}_\infty^1,\ldots,\tilde{P}_\infty^{q+1}\},\qquad
O:=\{\tilde{P}_{(a,b,0)} \mid a,b \in \Fqq, \ b^{q+1}-a^{q+1}+1=0\}.
\end{equation}
Note that $|O_\infty|=q+1$ and $|O|=q^3-q$. 

\begin{theorem} \label{lifttildeY}
The group $\aut(\tilde{\mathcal{Y}}_{n,s})$ has a subgroup $G \cong \SL(2,q) \rtimes C_{(q^n+1)/s}$. The group $G$ acts on the places of $\tilde{\mathcal{Y}}_{n,s}$ with two short orbits, namely $O_\infty$ and $O$ defined in Equation \eqref{eq:orbit_infzero}.
\end{theorem}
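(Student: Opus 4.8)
The plan is to lift the known automorphisms of the Hermitian function field through the Kummer extension $\tilde{\mathcal{Y}}_{n,s}/\mathcal{H}_q^{(n)}$ established in the proof of Proposition \ref{genusYtilde}, and then to add a cyclic automorphism coming from the Kummer structure itself. Recall from Subsection \ref{subsec:Hermitian} that $S_\ell \cong \SL(2,q)$ is the subgroup of $\aut(\mathcal{H}_q)$ fixing the function $u=y(x^{q^2}-x)/(x^{q+1}-1)$, which is precisely the element whose $(m/s)$-th root defines $\tilde{\mathcal{Y}}_{n,s}$. Because every $\alpha_{a,b,c,d}\in S_\ell$ fixes $u$ and $\tilde{\mathcal{Y}}_{n,s}=\mathcal{H}_q^{(n)}(z)$ with $z^{m/s}=u$, each such $\alpha$ extends to an automorphism of $\tilde{\mathcal{Y}}_{n,s}$ fixing $z$ (sending $z\mapsto z$), giving an embedding $S_\ell\hookrightarrow\aut(\tilde{\mathcal{Y}}_{n,s})$. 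This produces the $\SL(2,q)$ factor.

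Next I would produce the cyclic factor $C_{(q^n+1)/s}$. Part of it is immediate: the map $z\mapsto\lambda z$ with $\lambda$ a primitive $(m/s)$-th root of unity is an automorphism fixing $\mathcal{H}_q^{(n)}$, giving a cyclic group of order $m/s=(q^n+1)/(s(q+1))$. To obtain the full order $(q^n+1)/s$, I would combine this with a suitable element of $\aut(\mathcal{H}_q)$ lying in $M_\ell$ but outside $S_\ell$: by the description in Equation \eqref{eq:Ml}, such an automorphism scales $u$ by a root of unity $\zeta$ with $\zeta^{q+1}=1$, as computed in the proof of Proposition \ref{prop:fixedfieldsH_q} where $\alpha_{a,b,c,d}(u)=\zeta u$. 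Lifting such an $\alpha$ together with an appropriate rescaling of $z$ (so that $z^{m/s}=u$ is respected) yields an automorphism of $\tilde{\mathcal{Y}}_{n,s}$ whose order involves the extra factor $q+1$; assembling these cyclic pieces gives $C_{(q^n+1)/s}$. I would then check that the $\SL(2,q)$ factor is normalized by this cyclic group so that the product is the semidirect product $G\cong\SL(2,q)\rtimes C_{(q^n+1)/s}$ claimed, verifying the order equals $q(q^2-1)\cdot(q^n+1)/s$.

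Finally, for the orbit structure I would transfer the action of $S_\ell$ on the rational places of $\mathcal{H}_q$ up to $\tilde{\mathcal{Y}}_{n,s}$. From Equation \eqref{eq:div_u}, the pole locus of $u$ is $\{P_\infty^1,\dots,P_\infty^{q+1}\}$ and its zero locus is exactly the $q^3-q$ affine places $P_{(a,b)}$ with $b^{q+1}-a^{q+1}+1=0$, all totally ramified in $\tilde{\mathcal{Y}}_{n,s}$ as shown in Proposition \ref{genusYtilde}. These are the two sets $O_\infty$ and $O$ of Equation \eqref{eq:orbit_infzero}. I would argue that $S_\ell$ already acts transitively on each of these two Hermitian orbits --- $S_\ell\cong\SL(2,q)$ acts transitively on the $q+1$ points at infinity, and transitively on the $q^3-q$ zeros of $u$ --- and that total ramification makes the induced action on $\tilde{\mathcal{Y}}_{n,s}$ transitive on the unique places $\tilde{P}_\infty^j$ and $\tilde{P}_{(a,b,0)}$ above them. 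Since these are the ramified places of the Kummer extension, they constitute short orbits of $G$, and one checks there are no others.

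The main obstacle I anticipate is the second paragraph: pinning down the precise cyclic automorphism of order $(q^n+1)/s$ and confirming it is well defined on $\tilde{\mathcal{Y}}_{n,s}$. One must simultaneously track how an element of $M_\ell\setminus S_\ell$ scales $u$ by $\zeta$ (with $\zeta^{q+1}=1$) and choose a compatible scaling of $z$ so that the defining relation $z^{m/s}=u$ is preserved; getting the order exactly right (rather than a divisor of it) and confirming the semidirect-product action of this cyclic group on $\SL(2,q)$ is the delicate bookkeeping. Verifying transitivity of $\SL(2,q)$ on the $q^3-q$ zeros of $u$ may also require a short argument, but it should follow from the standard two-transitive action of $\SL(2,q)$ underlying the Hermitian geometry.
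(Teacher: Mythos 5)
Your proposal is correct and follows essentially the same route as the paper: both lift the maximal subgroup $M_\ell\leq\aut(\mathcal H_q)$ through the Kummer extension $z^{m/s}=u$ using the relation $\alpha_{a,b,c,d}(u)=\zeta u$ with $\zeta^{q+1}=1$, choosing $\xi$ with $\xi^{m/s}=\zeta$ (so $\xi^{(q^n+1)/s}=1$, whence $\xi\in\FF_{q^{2n}}$), and read off the two short orbits from the ramified places above the zeros and poles of $u$. The only cosmetic difference is that you assemble $G$ as the lifted $S_\ell$ (fixing $z$) together with a separately constructed cyclic complement of order $(q^n+1)/s$, whereas the paper defines all lifts $\alpha_{a,b,c,d,\xi}$ of $M_\ell$ at once and identifies the resulting group as $\SL(2,q)\rtimes C_{(q^n+1)/s}$; the orbit claim is left to direct computation in both treatments.
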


\begin{proof}
Let $\alpha_{a,b,c,d} \in M_\ell$ be an automorphism of $\mathcal{H}_q$, where $b=\zeta c^q$ and $d=\zeta a^q$ for a $(q+1)$-st root of unity $\zeta$. As in the proof of Proposition \ref{prop:fixedfieldsH_q}, we have $\alpha_{a,b,c,d}(u)=\zeta u$.
Since in $\tilde{\mathcal{Y}}_{n,s}$ we have $z^{m/s}=u$ and $\alpha_{a,b,c,d}(u)=\zeta u$, we conclude that for any $\alpha_{a,b,c,d} \in M_\ell$ and any $\xi \in \mathbb{F}_{q^{2n}}$ such that $\xi^{m/s}=\zeta$, the map $\alpha_{a,b,c,d,\xi}: (x,y,z) \mapsto (ax+by,cy+dy,\xi z)$ is in $\aut(\tilde{\mathcal{Y}}_{n,s})$. Note that since $\zeta^{q+1}=1$, we have $\xi^{(q^n+1)/s}=1,$ implying that indeed $\xi \in \mathbb{F}_{q^{2n}}.$ Hence the groups
\begin{equation}\label{eq:groupGH}
G:=\{\alpha_{a,\zeta c^q,c,\zeta a^q,\xi} \mid \alpha_{a,\zeta c^q,c,\zeta a^q} \in M_\ell, \ \xi^{m/s}=\zeta \},\quad
H:=\{\alpha_{a,c^q,c,a^q,\xi} \mid \alpha_{a,c^q,c,a^q} \in S_\ell, \ \xi^{m/s}=1 \}
\end{equation}
are automorphism groups of $\tilde{\mathcal{Y}}_{n,s}$ with
$G \cong \SL(2,q) \rtimes C_{(q^n+1)/s}$ and $H \cong S_\ell \times C_{m/s} \cong \SL(2,q) \times C_{m/s}$. The claims on the short orbits of $G$ follows by direct computation using the matrix representation of the automorphisms.
\end{proof}

The subgroup $E_q$ of $S_\ell$ from Equation \eqref{eq:Eq} can be viewed as a subgroup of $S_\ell \rtimes C_{m/s}$ as well. Then $E_q$ fixes both $\rho=x+y$ and $z$ when acting on $\tilde{\mathcal{Y}}_{n,s}$. The group $E_q$ fixes one of the poles of $z$, and acts with long orbits on the other places. Relabeling if needed, we can assume that $E_q$ fixes $\tilde{P}_\infty^{1}$.

\begin{lemma} \label{contr1TILDE}
Denote with $E_q$ the elementary abelian $p$-subgroup of order $q$ in $\SL(2,q)$ fixing $\tilde{P}_{\infty}^1$. Let $\alpha \in E_q \setminus \{\mathrm{id}\}$. Then $i(\alpha)=(q^n+1)/s+1$.
 \end{lemma}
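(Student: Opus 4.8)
The plan is to reduce $i(\alpha)$, which by definition counts the ramification groups containing $\alpha$, to a single local computation. Since $E_q$ is a $p$-group acting on the $\FF_{q^{2n}}$-maximal function field $\tilde{\mathcal{Y}}_{n,s}$, which therefore has $p$-rank zero, Corollary \ref{1actionp2} together with \cite[Lemma 11.129]{HKT} guarantees that the nontrivial element $\alpha$ fixes exactly one place; as $E_q$ fixes $\tilde{P}_{\infty}^1$ by assumption, that place is $P:=\tilde{P}_{\infty}^1$. Hence $\alpha$ can only lie in ramification groups attached to $P$. Writing $t$ for a local parameter at $P$, the condition $\alpha\in G_P^{(i)}$ is $v_P(\alpha(t)-t)\ge i+1$, so $\alpha$ lies in exactly the groups $G_P^{(0)},\dots,G_P^{(v_P(\alpha(t)-t)-1)}$. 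Thus $i(\alpha)=i_P(\alpha):=v_P(\alpha(t)-t)$, and it remains to evaluate this one valuation.

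To compute $i_P(\alpha)$ I would use a convenient function instead of an explicit uniformizer. Expanding $\alpha(t)=t+\gamma t^{\,i_P(\alpha)}+\cdots$ and any $f$ in powers of $t$, one gets the standard leading-term identity $v_P(\alpha(f)-f)=v_P(f)+i_P(\alpha)-1$ valid whenever $p\nmid v_P(f)$ (the coefficient of the leading term is a nonzero multiple of $v_P(f)$). I would apply this with $f=x$. Writing $\alpha=\alpha_{a,\,a-1,\,-a+1,\,-a+2}$ from Equation \eqref{eq:Eq} for some $a\in\FF_{q^2}$ with $a\ne1$, a direct computation gives $\alpha(x)-x=(a-1)x+(a-1)y=(a-1)\rho$, where $\rho=x+y$. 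Consequently $i_P(\alpha)=v_P(\alpha(x)-x)-v_P(x)+1=v_P(\rho)-v_P(x)+1$.

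The two valuations are then evaluated through the base field. The place $P$ lies over $P_{\infty}^1$ of $\mathcal{H}_q^{(n)}$, and the Kummer extension $\tilde{\mathcal{Y}}_{n,s}/\mathcal{H}_q^{(n)}$ of degree $M=m/s$ is totally ramified there, so $v_P=M\cdot v_{P_{\infty}^1}$ on functions of $\mathcal{H}_q^{(n)}$. Since $x$ has a simple pole at $P_{\infty}^1$ we get $v_P(x)=-M$, and since $p\nmid q^n+1$ we have $p\nmid M$, which validates the leading-term formula. For $\rho$ I would use $\prod_{\omega^{q+1}=1}(y-\omega x)=y^{q+1}-x^{q+1}=-1$: at $P_{\infty}^1$ the direction fixed by $E_q$ is $\omega=-1$ (consistently with $\rho=y-(-1)x$ being fixed), so the factor $y+x=\rho$ carries a zero of order $q$ while each of the remaining $q$ factors contributes a simple pole, giving $v_{P_{\infty}^1}(\rho)=q$ and hence $v_P(\rho)=Mq$. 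Substituting, $i_P(\alpha)=Mq-(-M)+1=M(q+1)+1=(q^n+1)/s+1$, as claimed.

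I expect the main obstacle to be the correct evaluation of $v_P(\rho)$: one must identify the $E_q$-fixed direction $\omega=-1$ and recognize that $\rho$ \emph{vanishes} to order $q$ at $P_{\infty}^1$ rather than having a pole, since a sign error here alters the final answer. A second point requiring explicit verification is the coprimality $p\nmid M$, which is exactly what ensures that the leading term of $\alpha(x)-x$ does not cancel and hence that the identity $v_P(\alpha(x)-x)=v_P(x)+i_P(\alpha)-1$ holds with equality.
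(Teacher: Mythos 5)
Your proof is correct, but it follows a genuinely different route from the paper. The paper never computes $i(\alpha)$ locally: it determines ${\rm Fix}(E_q)=\mathbb{F}_{q^{2n}}(\rho,z)$, computes its genus $(m/s-1)(q^2-1)/2$ via Kummer theory, invokes \cite{MZ2016} to argue that all nontrivial elements of $E_q$ are contained in the same number of ramification groups (so that $i(\alpha)$ is a common value $A$), and then extracts $A$ from the Hurwitz genus formula for the degree-$q$ extension $\tilde{\mathcal{Y}}_{n,s}/\mathbb{F}_{q^{2n}}(\rho,z)$ --- an averaging argument. You instead localize at the unique fixed place $\tilde{P}_\infty^1$ (justified by Corollary \ref{1actionp2} and the $p$-rank zero property) and evaluate $v_P(\alpha(t)-t)$ directly through the standard identity $v_P(\alpha(f)-f)=v_P(f)+i_P(\alpha)-1$ for $p\nmid v_P(f)$, applied to $f=x$ with $\alpha(x)-x=(a-1)\rho$; the needed valuations $v_P(x)=-m/s$ and $v_P(\rho)=qm/s$ agree with the divisors the paper records (e.g.\ Equation \eqref{eq:div_rho}), and the coprimality $p\nmid m/s$ holds since $m\mid q^n+1$. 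Your computation $M q+M+1=(q^n+1)/s+1$ matches the paper's answer. The trade-off: your argument is more self-contained and elementary --- it avoids both the genus computation of the fixed field and the external uniformity result from \cite{MZ2016}, and it pins down the ramification jump for each element individually --- while the paper's argument recycles quantities (the genera of $\tilde{\mathcal{Y}}_{n,s}$ and of ${\rm Fix}(E_q)$) that it needs elsewhere anyway and only requires the total degree of the different.
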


\begin{proof}
Note that $\alpha$ is of prime order $p$, since $E_q$ is elementary abelian. Note also, in view of Proposition \ref{prop:fixedfieldsH_q}, that ${\rm Fix}(E_q)=\mathbb{F}_{q^{2n}}(\rho,z)$, where $\rho=x+y$. Indeed, $\mathbb{F}_{q^{2n}}(x,y,z) / \mathbb{F}_{q^{2n}}(\rho,z)$ is an extension of degree $q=|E_q|$ and both $\rho$ and $z$ are fixed by $E_q$. By direct computation, we have $z^{m/s}=(\rho^{q^2-1}-1)/\rho^{q-1}$. Using the theory of Kummer extensions, this implies that $\mathbb{F}_{q^{2n}}(\rho,z)$ has genus $g(\mathbb{F}_{q^{2n}}(\rho,z))=(m/s-1)(q^2-1)/2$.

Let $\hat{E_q}$ be the automorphism group of $\mathbb{F}_{q^{2n}}(x,y)={\rm Fix}(C_{m/s})$ induced by $E_q$. Let $\hat\alpha,\hat\beta$ any two elements in $\hat E_q\setminus\{\mathrm{id}\}$. Then $\hat\alpha$ and $\hat\beta$ are contained in the same number of ramification groups 
of $\hat{E}_q$ at $P_\infty^1$; see \cite{MZ2016}. Therefore, any two elements $\alpha,\beta\in E_q\setminus\{\mathrm{id}\}$ are contained in the same number of ramification groups of $E_q$ at $\tilde{P}_\infty^1$. Recalling that $E_q$ acts with long orbits out of $\tilde{P}_\infty^1$, this implies that 
all elements $\alpha \in E_q \setminus \{\mathrm{id}\}$ give the same contribution $i(\alpha)=:A$ to the different divisor of the extension $\mathbb{F}_{q^{2n}}(x,y,z) / \mathbb{F}_{q^{2n}}(\rho,z)$. Since $g(\mathbb{F}_{q^{2n}}(\rho,z))=(m/s-1)(q^2-1)/2$ we get from the Hurwitz genus formula
\begin{align*}
2g(\tilde{\mathcal{Y}}_{n,s})-2&=\frac{(q^2-1)(q^n+1)}{s}-(q^3+1)=|E_q|\bigg( 2g(\mathbb{F}_{q^{2n}}(\rho,z))-2\bigg)+\sum_{\alpha \in E_q \setminus \{0\}}i(\alpha)\\
&=q\bigg( (q^2-1)(m/s-1)-2\bigg)+A(q-1),
\end{align*}
from which one gets the claim $A=(q^n+1)/s+1$.
\end{proof}

Our aim is to prove for $\tilde{\mathcal{Y}}_{n,s}$ the analogue \ref{mainYtilde} of Theorem \ref{th:TMZ1}.

\begin{theorem} \label{mainYtilde}
    If $3\nmid n$ or $\frac{m}{s}\nmid(q^2-q+1)$, then the full automorphism group ${\rm Aut}(\tilde{\mathcal{Y}}_{n,s})$ of $\tilde{\mathcal{Y}}_{n,s}$ has order $q(q^2-1)(q+1)m/s$ and is equal to $G \cong \SL(2,q) \rtimes C_{(q^n+1)/s}$.
    
    If $3\mid n$ and $\frac{m}{s}\mid (q^2-q+1)$, then ${\rm Aut}(\tilde{\mathcal{Y}}_{n,s})$ has order $(q^3+1)q^3(q^2-1)m/s$ and is an extension of a normal subgroup isomorphic to ${\PGU}(3,q)$ by a cyclic group of order $m/s$. 
\end{theorem}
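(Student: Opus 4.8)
The plan is to determine $\bar G:=\aut(\tilde{\mathcal{Y}}_{n,s})$ by relating it to the Hermitian subfield $\mathcal{H}_q^{(n)}$. Write $C:=C_{m/s}=\{\,(x,y,z)\mapsto(x,y,\lambda z)\mid\lambda^{m/s}=1\,\}$ for the Galois group of $\tilde{\mathcal{Y}}_{n,s}/\mathcal{H}_q^{(n)}$; by Theorem \ref{lifttildeY} it is cyclic of order $m/s$, central in $G$ (it scales only $z$), with $\mathrm{Fix}(C)=\mathcal{H}_q^{(n)}$ and $G/C\cong M_\ell$. The central reduction is: once $C\trianglelefteq\bar G$ is known, $\bar G$ acts on $\mathrm{Fix}(C)=\mathcal{H}_q^{(n)}$, giving an injection $\bar G/C\hookrightarrow\aut(\mathcal{H}_q)\cong\PGU(3,q)$ whose image contains $M_\ell$. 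Since $M_\ell$ is maximal in $\PGU(3,q)$ (Subsection \ref{subsec:Hermitian}), the image is either $M_\ell$ — whence $\bar G=G$ and the first assertion — or all of $\PGU(3,q)$, whence $|\bar G|=(q^3+1)q^3(q^2-1)m/s$; since $C$ is central in $G$, a standard analysis of the resulting extension then exhibits the normal subgroup $\PGU(3,q)$ with cyclic quotient of order $m/s$ required by the statement.

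First I would prove the normality $C\trianglelefteq\bar G$. The clean route is to show that the set $\Omega:=O\cup O_\infty$ of the $q^3+1$ places lying over the $\mathbb{F}_{q^2}$-rational places of $\mathcal{H}_q$ is $\bar G$-invariant, and then to identify $C$ with the pointwise stabiliser $\ker(\bar G\to\mathrm{Sym}(\Omega))$. The second part is a short divisor computation: if $\kappa\in\bar G$ fixes every place of $\Omega$, then it fixes the supports of $\mathrm{div}(x)$, $\mathrm{div}(y)$, $\mathrm{div}(z)$ (all contained in $\Omega$), so $\kappa(x)=c'x$, $\kappa(y)=c''y$, $\kappa(z)=cz$; imposing the defining equations \eqref{tildeYns} gives that $c',c''$ are $(q+1)$-st roots of unity, and requiring that a place $\tilde P_{(a,b,0)}$ with $a,b\ne0$ be fixed forces $c'=c''=1$, i.e. $\kappa\in C$. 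The first part, the $\bar G$-invariance of $\Omega$, is where the tools of Section \ref{sec:prel} enter: using $|G|>84(g-1)$ (valid for all but finitely many small $q$) and Theorem \ref{Hurwitz}$(iv)$, together with Corollary \ref{1actionp2} (a Sylow $p$-subgroup $S\ge E_q$ fixes exactly the place $\tilde P_\infty^1$) and the depth computation of Lemma \ref{contr1TILDE}, one controls the short orbits of $\bar G$ and shows that $\Omega$ is a union of at most two of them, pinned down intrinsically (e.g. by the Weierstrass semigroups of its places) as $\bar G$-invariant.

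With $C\trianglelefteq\bar G$ established, deciding the two alternatives amounts to asking whether all of $\PGU(3,q)=\aut(\mathcal{H}_q^{(n)})$ lifts through the tame Kummer cover $z^{m/s}=u$. This is a divisor-class criterion: from \eqref{eq:div_u} one has $\mathrm{div}(u)=D_{\mathrm{all}}-(q^2-q+1)\sum_{j=1}^{q+1}P_\infty^j$, where $D_{\mathrm{all}}$, the sum of all $q^3+1$ rational Hermitian places, is $\PGU(3,q)$-invariant. Hence the class of $\mathrm{div}(u)$ modulo $m/s$ is $\PGU(3,q)$-invariant — so the whole group lifts, $\mathbb{F}_{q^{2n}}$ containing the needed roots of unity — exactly when $m/s\mid q^2-q+1$. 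Finally, a congruence analysis of $\gcd(q^2-q+1,m)$ shows that, for $s\ne m$, this divisibility can occur only when $3\mid n$: indeed for $3\nmid n$ one has $q^2-q+1\nmid q^n+1$ and $\gcd(q^2-q+1,m)=1$, so $m/s\mid q^2-q+1$ would force $m/s=1$. This reproduces exactly the two regimes of the statement, with orders $q(q^2-1)(q+1)m/s=|G|$ and $(q^3+1)q^3(q^2-1)m/s$ respectively.

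The step I expect to be the genuine obstacle is the $\bar G$-invariance of $\Omega$, equivalently the normality of $C$: one must rule out that $\bar G$ carries a place of $\Omega$ outside it, which requires tight control of the non-tame short orbit through Lemma \ref{contr1TILDE}, Theorem \ref{Hurwitz}$(iv)$ and the exact value of $g$, and ultimately an intrinsic description of the places lying over the Hermitian points. A secondary, more routine difficulty is the finitely many small $q$ for which $|G|>84(g-1)$ fails; these can be handled directly, using Theorem \ref{thm:stichtenoth_henn} to exclude the four extremal families (the genus of $\tilde{\mathcal{Y}}_{n,s}$ matches none of them for $n\ge5$) and thereby bounding $|\bar G|$.
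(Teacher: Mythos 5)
Your high-level architecture coincides with the paper's: show that $\aut(\tilde{\mathcal{Y}}_{n,s})$ preserves $O\cup O_\infty$, deduce normality of $C_{m/s}$, pass to the quotient acting on $\mathcal{H}_q^{(n)}$, and use maximality of $M_\ell$ in $\PGU(3,q)$ to get the dichotomy; your identification of the pointwise stabiliser of $\Omega$ with $C_{m/s}$ via divisors of $x,y,z$ is a correct (and slightly more direct) variant of the paper's Lemma \ref{normactionTILDE}. However, both load-bearing steps are missing, and one of your proposed tools is quantitatively unsound. For the invariance of $\Omega$ you invoke ``$|G|>84(g-1)$, valid for all but finitely many small $q$''. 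This is false in the relevant sense: $|G|=q(q^2-1)(q^n+1)/s$ while $84(g-1)=42\bigl((q^2-1)(q^n+1)/s-(q^3+1)\bigr)$, so the inequality requires roughly $q>42$; for each $q\lesssim 41$ it fails for \emph{all} $n$ and $s$ --- infinitely many cases, which cannot be ``handled directly'', and Theorem \ref{thm:stichtenoth_henn} only excludes $|\aut|\geq 8g^3$, which is far too weak to pin down the group. The paper never uses $|G|>84(g-1)$: it assumes for contradiction that the orbit $\Sigma\supseteq O_\infty$ meets a place outside $O\cup O_\infty$, so that $\Sigma$ contains a long $G$-orbit and $|\aut(\tilde{\mathcal{Y}}_{n,s})|\geq |G_{\tilde P_\infty^1}|\,(q+1+|G|)>84(g-1)$ for every $q$ (Remark \ref{rem1}); even then, reaching a contradiction requires the long case analysis of Proposition \ref{normalizzanoTilde}, whose decisive ingredient is an explicit construction of functions and regular differentials showing that $Aqm/s+q^2-q$ is a non-gap at $\tilde P_\infty^1$ but a gap at every place of $O$. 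None of this is supplied by your sketch.

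The second gap is the lifting criterion. You assert that $\PGU(3,q)$ lifts through the Kummer cover ``exactly when $m/s\mid q^2-q+1$'', but the argument you give (invariance of the class of $\mathrm{div}(u)$) only yields that the order $d$ of the class of $\sum_{b^{q+1}=-1}\tilde P_{(0,b,0)}-\sum_j\tilde P_\infty^j$ in $\mathrm{Pic}(\tilde{\mathcal{Y}}_{n,s})$ divides $\gcd(m/s,\,q^2-q+1)$ when a lift of $\tau$ exists. To conclude you must rule out $d<m/s$, which is exactly where the paper has to work: it shows $qm/s+d\in H(\tilde P_\infty^1)$ would force, via the known Weierstrass semigroup of the BM function field at $\tilde Q_\infty^{(1)}$ from \cite{MPL}, that $q^2-q$ divides $sd$, contradicting $\gcd(m,q)=1$. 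Without that step the dichotomy is not established. (As a minor point, the paper obtains the second alternative of the theorem not from an abstract extension analysis but from the observation that $\tilde{\mathcal{Y}}_{n,s}\cong\mathcal{Y}_{n,s}$ when $3\mid n$ and $m/s\mid q^2-q+1$, and then quotes Theorem \ref{th:TMZ1}.)
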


We start with a small remark that will set our assumptions for the rest of the section.

\begin{remark} \label{aspetta}
In order to prove Theorem \ref{mainYtilde} we only have to analyze the case $3\nmid n$ or $\frac{m}{s}\nmid(q^2-q+1)$. Indeed if $3\mid n$ and $\frac{m}{s}\mid (q^2-q+1)$ then the function fields $\tilde{\mathcal{Y}}_{n,s}$ and ${\mathcal{Y}}_{n,s}$ are isomorphic. This follows from the fact that $\mathcal{C}_3$ and $\tilde{\mathcal{C}}_3$ are both isomorphic to the GK function field, and if $\frac{m}{s}\mid (q^2-q+1)$ then $\tilde{\mathcal{Y}}_{n,s}$ (resp. ${\mathcal{Y}}_{n,s}$) is a Galois subfield of $\tilde{\mathcal{C}}_3$ (resp. $\mathcal{C}_3$) seen as a maximal function field not only over $\mathbb{F}_{q^6}$, but over $\mathbb{F}_{q^{2n}}$.
\end{remark}

According to Remark \ref{aspetta}, we can assume that either $3\nmid n$ or $\frac{m}{s}\nmid(q^2-q+1)$, with the aim of proving that $G$ is the full automorphism group of $\tilde{\mathcal{Y}}_{n,s}$. Hence the condition $3\nmid n$ or $\frac{m}{s}\nmid(q^2-q+1)$ will be assumed from now on.

We denote by $C_{m/s}$ the Galois group of the extension $\mathbb{F}_{q^{2n}}(x,y,z) / \mathbb{F}_{q^{2n}}(x,y)$, which is generated by the automorphism $\tau:(x,y,z) \mapsto (x,y,\xi z)$, where $\xi$ is a primitive $(m/s)$-th root of unity. 
Now recall the notation $O_\infty$ and $O$ from Equation \eqref{eq:orbit_infzero}. Understanding the action of the automorphisms of $\tilde{\mathcal{Y}}_{n,s}$ on $O_\infty$ and $O$ is the key to prove Theorem \ref{mainYtilde}.

\begin{lemma} \label{normactionTILDE}
The group $C_{m/s}$ is normal in $\aut(\tilde{\mathcal{Y}}_{n,s})$ if and only if $\aut(\tilde{\mathcal{Y}}_{n,s})$ acts on the set $O \cup O_\infty$. Furthermore, if $C_{m/s}$ is normal in $\aut(\mathcal{Y}_{n,s})$ then the quotient group $\aut(\tilde{\mathcal{Y}}_{n,s})/C_{m/s}$ is an automorphism group of the Hermitian function field $\mathcal{H}_q^{(n)}=\mathbb{F}_{q^{2n}}(x,y)$ and either $\aut(\tilde{\mathcal{Y}}_{n,s})=G$ or $|\aut(\tilde{\mathcal{Y}}_{n,s})/C_{m/s}|=q^3(q^2-1)(q^3+1)$.
In the latter case $\aut(\tilde{\mathcal{Y}}_{n,s})/C_{m/s}$ is isomorphic to $\PGU(3,q)$ and acts as $\PGU(3,q)$ on the Hermitian function field.
\end{lemma}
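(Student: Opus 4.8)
The plan is to translate the normality of $C_{m/s}$ into a statement about the subfield $\mathcal{H}_q^{(n)}=\mathrm{Fix}(C_{m/s})$, and then into one about the ramification locus of the Kummer cover $\tilde{\mathcal{Y}}_{n,s}/\mathcal{H}_q^{(n)}$. First I would record two preliminary facts. By Equation \eqref{eq:div_u} together with $\gcd(m/s,q^2-q)=1$, the set $O\cup O_\infty$ is exactly the set of places of $\tilde{\mathcal{Y}}_{n,s}$ that are totally ramified over $\mathcal{H}_q^{(n)}$, so $C_{m/s}=\mathrm{Gal}(\tilde{\mathcal{Y}}_{n,s}/\mathcal{H}_q^{(n)})$ fixes each place of $O\cup O_\infty$. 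Second, by Galois theory $C_{m/s}$ is normal in $\aut(\tilde{\mathcal{Y}}_{n,s})$ if and only if every $g$ sends $\mathrm{Fix}(C_{m/s})=\mathcal{H}_q^{(n)}$ to itself: indeed $\mathrm{Fix}(gC_{m/s}g^{-1})=g(\mathcal{H}_q^{(n)})$, and equality of orders turns the inclusion $gC_{m/s}g^{-1}\subseteq C_{m/s}$ into an equality. This reformulation drives both implications.

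For the forward implication I would argue that if every $g$ preserves $\mathcal{H}_q^{(n)}$, then $g$ induces an automorphism of $\mathcal{H}_q^{(n)}$ and hence preserves ramification indices over $\mathcal{H}_q^{(n)}$; in particular it sends totally ramified places to totally ramified places, so it permutes $O\cup O_\infty$. Thus $\aut(\tilde{\mathcal{Y}}_{n,s})$ acts on $O\cup O_\infty$.

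The reverse implication is the heart of the lemma. Assume $\aut(\tilde{\mathcal{Y}}_{n,s})$ acts on $O\cup O_\infty$ and let $K$ be the pointwise stabilizer of $O\cup O_\infty$; as the kernel of the induced permutation action it is normal in $\aut(\tilde{\mathcal{Y}}_{n,s})$, and $C_{m/s}\le K$ by the remark above. The plan is to show that $K$ is cyclic, for then $C_{m/s}$, being the unique subgroup of $K$ of order $m/s$, is characteristic in $K$ and therefore normal in $\aut(\tilde{\mathcal{Y}}_{n,s})$. To prove cyclicity, note that $\mathrm{div}(z)$ is supported on $O\cup O_\infty$ (since $z^{m/s}=u$ and the divisor of $u$ in Equation \eqref{eq:div_u} is supported on the places lying below $O\cup O_\infty$); hence any $g\in K$ fixes $\mathrm{div}(z)$ and satisfies $g(z)=c_g z$ for some $c_g\in\FF_{q^{2n}}^{\times}$. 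The assignment $g\mapsto c_g$ is a homomorphism $\chi\colon K\to\FF_{q^{2n}}^{\times}$, so it is enough to show $\chi$ is injective. Its kernel consists of the $g\in K$ with $g(z)=z$, i.e. the $g$ fixing $\FF_{q^{2n}}(z)$ pointwise. Here I would identify $\FF_{q^{2n}}(z)$ with the fixed field of the lift of $S_\ell$ (the copy with $\xi=1$, which fixes $z$): using $\mathrm{Fix}(S_\ell)=\FF_{q^2}(u)$ from Proposition \ref{prop:fixedfieldsH_q} one gets the degree count $[\tilde{\mathcal{Y}}_{n,s}:\FF_{q^{2n}}(z)]=|S_\ell|$, so $\tilde{\mathcal{Y}}_{n,s}/\FF_{q^{2n}}(z)$ is Galois with group $S_\ell$ and $\ker\chi\le S_\ell$. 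Since the lift of $S_\ell$ maps isomorphically to the subgroup $S_\ell\cong\SL(2,q)$ of $\aut(\mathcal{H}_q^{(n)})=\PGU(3,q)$, and no nontrivial element of $\PGU(3,q)$ fixes all $q^3+1$ rational points of the Hermitian curve, the only element of $S_\ell$ fixing $O\cup O_\infty$ pointwise is the identity. Hence $\ker\chi$ is trivial and $K$ is cyclic. I expect this injectivity step to be the main obstacle: it is what forces one to pin down $\{g:g(z)=z\}$ exactly as $S_\ell$ and to invoke faithfulness of the $\SL(2,q)$-action on the rational points.

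Finally, for the "furthermore" part I would assume $C_{m/s}$ normal and restrict each $g$ to $\mathcal{H}_q^{(n)}$. The resulting homomorphism $\aut(\tilde{\mathcal{Y}}_{n,s})\to\aut(\mathcal{H}_q^{(n)})=\PGU(3,q)$ has kernel exactly $\mathrm{Gal}(\tilde{\mathcal{Y}}_{n,s}/\mathcal{H}_q^{(n)})=C_{m/s}$, so $\aut(\tilde{\mathcal{Y}}_{n,s})/C_{m/s}$ embeds into $\PGU(3,q)$ and contains the image $G/C_{m/s}\cong M_\ell$ (of order $q(q^2-1)(q+1)=|M_\ell|$, by Theorem \ref{lifttildeY}). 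Since $M_\ell$ is a maximal subgroup of $\PGU(3,q)$, this quotient is either $M_\ell$ or all of $\PGU(3,q)$. In the first case $|\aut(\tilde{\mathcal{Y}}_{n,s})/C_{m/s}|=|M_\ell|=|G/C_{m/s}|$ together with $C_{m/s}\le G\le\aut(\tilde{\mathcal{Y}}_{n,s})$ forces $\aut(\tilde{\mathcal{Y}}_{n,s})=G$; in the second case $|\aut(\tilde{\mathcal{Y}}_{n,s})/C_{m/s}|=|\PGU(3,q)|=q^3(q^2-1)(q^3+1)$ and the quotient is isomorphic to $\PGU(3,q)$ acting as $\PGU(3,q)$ on $\mathcal{H}_q^{(n)}$, as claimed.
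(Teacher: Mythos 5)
Your proposal is correct, and its overall architecture coincides with the paper's: both directions hinge on the observation that $O\cup O_\infty$ is exactly the set of places fixed by $C_{m/s}$ (equivalently, totally ramified over $\mathcal{H}_q^{(n)}$), the reverse implication passes through the pointwise stabilizer $K$ of $O\cup O_\infty$ and the fact that $C_{m/s}$ is characteristic in a cyclic normal subgroup, and the "furthermore" part is identical (faithful action of the quotient on $\mathcal{H}_q^{(n)}$ plus maximality of the subgroup of $\PGU(3,q)$ of order $q(q-1)(q+1)^2$ stabilizing the $q+1$ places under $O_\infty$). The one genuinely different ingredient is your proof that $K$ is cyclic. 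The paper argues that $|K|$ is prime to $p$ because the function field has $p$-rank zero and $|O\cup O_\infty|>1$ (so a nontrivial $p$-element cannot fix more than one place), and then invokes the structure of place stabilizers (Lemma \ref{highnormal}) to conclude cyclicity. You instead observe that $\divv(z)$ is supported on $O\cup O_\infty$, so every $g\in K$ satisfies $g(z)=c_gz$, giving a homomorphism $\chi\colon K\to\FF_{q^{2n}}^\times$; you identify $\ker\chi$ with a subgroup of the Galois group $\SL(2,q)$ of $\tilde{\mathcal{Y}}_{n,s}/\FF_{q^{2n}}(z)$ and kill it by faithfulness of the $\PGU(3,q)$-action on the rational places of the Hermitian function field. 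Both arguments are sound; yours is self-contained and avoids the $p$-rank-zero machinery, at the cost of the extra degree count $[\tilde{\mathcal{Y}}_{n,s}:\FF_{q^{2n}}(z)]=|S_\ell|$, while the paper's is shorter given its preliminaries and is reused verbatim in Lemma \ref{stabinfYtilda} for the stabilizer of $O_\infty$ alone, where your divisor-of-$z$ trick would not apply directly.
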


\begin{proof}
Recall that $O \cup O_\infty$ consists exactly of the fixed places of $C_{m/s}$ on $\tilde{\mathcal{Y}}_{n,s}$, and $C_{m/s}$ has no other short orbits. 
This implies that, if $C_{m/s}$ is normal in $\aut(\tilde{\mathcal{Y}}_{n,s})$, then $\aut(\tilde{\mathcal{Y}}_{n,s})$ acts on $O \cup O_\infty$. Indeed if $\alpha \in \aut(\tilde{\mathcal{Y}}_{n,s})$, $\beta \in C_{m/s}$ and $\tilde{P} \in O \cup O_\infty$ then $\alpha^{-1} \circ \beta \circ \alpha=\beta^{\prime} \in C_{m/s}$ and $\tilde{P}=\beta^\prime(\tilde{P})=\alpha^{-1}(\beta(\alpha(\tilde{P})))$, which gives $\alpha(\tilde{P})=\beta(\alpha(\tilde{P}))$ and so $\alpha(\tilde{P}) \in O \cup O_\infty$.

Conversely, suppose that $\aut(\tilde{\mathcal{Y}}_{n,s})$ acts on the set $O \cup O_\infty$ and denote with $T$ the subgroup 
$$T=\{\sigma \in \aut(\tilde{\mathcal{Y}}_{n,s}) \mid \sigma(\tilde{P})=\tilde{P}, \ \textrm{for \ all} \ \tilde{P} \in O \cup O_\infty \}.$$
Then $T$ is a normal subgroup of $\aut(\tilde{\mathcal{Y}}_{n,s})$. Indeed let $g \in \aut(\tilde{\mathcal{Y}}_{n,s})$ and $\sigma \in T$. Then for all $\tilde{P} \in O \cup O_\infty$ it holds that $g(\tilde{P}) \in O \cup O_\infty$,
so that $\sigma(g(\tilde{P}))=g(\tilde{P})$. Hence the conjugate $g^{-1}\sigma g$ of $\sigma$ is such that $g^{-1}\sigma g(\tilde{P})=g^{-1}\sigma(g(\tilde{P}))=\tilde{P}$ for all $\tilde{P} \in O \cup O_\infty$, that is, $g \sigma g^{-1} \in T$. Note also that the order of $T$ is coprime with $p$ because $|O \cup O_\infty|>1$ and the function field has $p$-rank zero, see \cite[Lemma 11.129]{HKT}.  This implies that $T$ is a cyclic group by Lemma \ref{highnormal}. Also $C_{m/s} \subseteq T$ is a characteristic subgroup of $T$ and thus a normal subgroup of $\Aut(\tilde{\mathcal{Y}}_{n,s})$. 

To prove the second part of the claim, recall that the fixed field of $C_{m/s}$ is the Hermitian function field $\mathbb{F}_{q^{2n}}(x,y)$ and $\aut(\mathbb{F}_{q^{2n}}(x,y)) = \PGU(3,q)$, see \cite[Proposition 11.30]{HKT}.
Thus, $\aut(\tilde{\mathcal{Y}}_{n,s})/C_{m/s}$ acts faithfully on the places below $O\cup O_{\infty}$ as a subgroup of $\PGU(3,q)$ and contains $G/C_{m/s}$, which is a group of order $q(q-1)(q+1)^2$ acting on the $q+1$ places below $O_\infty$.
The subgroup of $\PGU(3,q)$ acting on such $q+1$ places is a maximal subgroup of order $q(q-1)(q+1)^2$, see \cite[Theorem A.10]{HKT}, and hence we get that either $\aut(\tilde{\mathcal{Y}}_{n,s})/C_{m/s}=G/C_{m/s}$ (and so $O_\infty$ and $O$ are two distinct orbits of $\aut(\tilde{\mathcal{Y}}_{n,s})$) or $\aut(\tilde{\mathcal{Y}}_{n,s})/C_{m/s}=\PGU(3,q)$.
\end{proof}

Next we make it more precise when the second case in Lemma \ref{normactionTILDE} occurs.

\begin{proposition} \label{fineTILDE}
Assume that the group $C_{m/s}$ is normal in $\aut(\tilde{\mathcal{Y}}_{n,s})$. Then
$\aut(\tilde{\mathcal{Y}}_{n,s})/C_{m/s}$ is isomorphic to $\PGU(3,q)$ if and only if $3 \mid n$ and $\frac{m}{s}\mid(q^2-q+1)$. 
\end{proposition}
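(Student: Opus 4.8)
The plan is to reduce the equivalence to a single divisibility condition on $N:=m/s$ through Kummer theory for the degree-$N$ cover $\tilde{\mathcal{Y}}_{n,s}/\mathcal{H}_q^{(n)}$, and then to translate that condition arithmetically. Throughout I work over the algebraic closure $\mathbb{K}$, as in the definition of $\aut$. The key preliminary is to rewrite Equation \eqref{eq:div_u} as
\[ (u) = D_{\mathrm{rat}} - (q^2-q+1)\,D_\infty, \]
where $D_\infty=\sum_{j=1}^{q+1}P_\infty^j$ is the divisor cut out on $\mathcal{H}_q$ by the line at infinity and $D_{\mathrm{rat}}$ is the reduced sum of all $q^3+1$ rational places; this holds since $D_{\mathrm{aff}}=D_{\mathrm{rat}}-D_\infty$. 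Because $\PGU(3,q)=\aut(\mathcal{H}_q^{(n)})$ permutes the rational places, $D_{\mathrm{rat}}$ is $\PGU(3,q)$-invariant, so for each $\sigma\in\PGU(3,q)$ one has $(\sigma(u))=D_{\mathrm{rat}}-(q^2-q+1)\,\sigma(D_\infty)$ and hence
\[ \bigl(\sigma(u)/u\bigr) = (q^2-q+1)\bigl(D_\infty-\sigma(D_\infty)\bigr). \]
By Kummer theory, $\sigma$ extends to an automorphism of $\tilde{\mathcal{Y}}_{n,s}=\mathcal{H}_q^{(n)}(z)$, $z^N=u$, if and only if $\sigma(u)\equiv u^a$ modulo $N$-th powers for some $a$ coprime to $N$, that is, $(\sigma(u))-a(u)$ equals $N$ times a principal divisor.

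Now recall that $\PGU(3,q)$ is transitive on the $q^2(q^2-q+1)$ secant lines of $\mathcal{H}_q$, with secant-stabilizer exactly the image $M_\ell$ of $G$ in $\PGU(3,q)$. First I would prove necessity: if $\aut(\tilde{\mathcal{Y}}_{n,s})/C_{m/s}\cong\PGU(3,q)$ then every $\sigma\in\PGU(3,q)$ lifts, so I apply the criterion to a $\sigma$ sending the line at infinity to a secant meeting $\mathcal{H}_q$ in $q+1$ affine places disjoint from $O_\infty$ (two distinct lines share one plane point, which genericity places off the curve). Comparing coefficients in $(\sigma(u))-a(u)=(1-a)D_{\mathrm{rat}}-(q^2-q+1)\sigma(D_\infty)+a(q^2-q+1)D_\infty$ over the three types of rational places forces $a\equiv1\pmod N$ and then $N\mid q^2-q+1$. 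For sufficiency, suppose $N\mid q^2-q+1$; then for every $\sigma$ the divisor $\tfrac{1}{N}\bigl(\sigma(u)/u\bigr)=\tfrac{q^2-q+1}{N}\bigl(D_\infty-\sigma(D_\infty)\bigr)$ is principal, because the difference of two line sections of a plane curve is the divisor of the ratio of the two linear forms. Hence $\sigma(u)/u$ is an $N$-th power, every $\sigma$ lifts, and by Lemma \ref{normactionTILDE} the quotient $\aut(\tilde{\mathcal{Y}}_{n,s})/C_{m/s}$ is all of $\PGU(3,q)$. Thus $\aut(\tilde{\mathcal{Y}}_{n,s})/C_{m/s}\cong\PGU(3,q)$ if and only if $N\mid q^2-q+1$.

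It then remains to match $N\mid q^2-q+1$ with the stated conditions. As $N\mid m$, this is equivalent to $N\mid\gcd(m,q^2-q+1)$; writing $q^2-q+1=(q^3+1)/(q+1)$ and using $\gcd(q^n+1,q^3+1)=q^{\gcd(n,3)}+1$ for odd $n$, division by $q+1$ gives $\gcd(m,q^2-q+1)=q^2-q+1$ when $3\mid n$ and $\gcd(m,q^2-q+1)=1$ when $3\nmid n$. Therefore, in the nondegenerate case $N=m/s>1$, the condition $N\mid q^2-q+1$ holds exactly when $3\mid n$ and $\frac{m}{s}\mid q^2-q+1$, which is the claim. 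I expect the main obstacle to be the sufficiency half of the middle step: passing from ``$N$ divides every coefficient of $(\sigma(u))-a(u)$'' to ``$(\sigma(u))-a(u)$ is $N$ times a \emph{principal} divisor'', which genuinely requires the geometric fact that differences of line sections are principal rather than merely $N$-divisible as divisors; a secondary point is confirming that a $\sigma$ with $D_\infty$ and $\sigma(D_\infty)$ of disjoint support really exists.
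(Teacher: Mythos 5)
Your proof is correct, but it follows a genuinely different route from the paper's for the key (necessity) direction. The paper reduces to lifting the single extra generator $\tau$ of $\PGU(3,q)$ beyond the line-stabilizer, computes the divisors of $z/\tilde{\tau}(z)$ and of $x$, deduces that the order $d$ of the class of $\sum_{b^{q+1}=-1}\tilde{P}_{(0,b,0)}-\sum_j\tilde{P}_\infty^j$ in the Picard group divides $\gcd(m/s,\,q^2-q+1)$, and then rules out $d<m/s$ by a Weierstrass-semigroup argument at a place of the BM field $\tilde{\mathcal{C}}_n$, citing the external description of the nongaps from \cite{MPL}. You instead apply the Kummer lifting criterion directly to the generator $u$ of the degree-$N$ cyclic extension: writing $(u)=D_{\mathrm{rat}}-(q^2-q+1)D_\infty$ with $D_{\mathrm{rat}}$ a $\PGU(3,q)$-invariant divisor, the congruence $(\sigma(u))-a(u)\equiv 0 \pmod{N}$ coefficient-by-coefficient at the three types of rational places forces $a\equiv 1$ and then $N\mid q^2-q+1$; this replaces the Picard-order and semigroup input with an elementary divisor computation and needs no reference to $\tilde{\mathcal{C}}_n$. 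Your sufficiency direction is also more self-contained than the paper's (which essentially defers to the known isomorphism with $\mathcal{Y}_{n,s}$ via Remark \ref{aspetta}): the principality of $\frac{q^2-q+1}{N}\bigl(D_\infty-\sigma(D_\infty)\bigr)$ as the divisor of a power of a ratio of linear forms gives the lift explicitly, and over the algebraically closed constant field the leftover constant is an $N$-th power, so every $\sigma$ lifts with $a=1$. Two small points you should make explicit: the existence of a rational place outside $\mathrm{supp}(D_\infty)\cup\mathrm{supp}(\sigma(D_\infty))$ follows from $q^3+1>2(q+1)$, and you do not actually need the two line sections to have disjoint supports---any $\sigma$ not stabilizing the line at infinity already yields a place lying in exactly one of the two supports; finally, the degenerate case $m/s=1$ must be excluded, as you note, which is consistent with the paper's standing assumption $s\neq m$.
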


\begin{proof}
If $3 \mid n$ and $\frac{m}{s} \mid q^2-q+1$, then Lemma \ref{normactionTILDE} implies that $\aut(\tilde{\mathcal{Y}}_{n,s})/C_{m/s}$ is isomorphic to $\PGU(3,q)$.
Conversely, suppose that $\aut(\tilde{\mathcal{Y}}_{n,s})/C_{m/s}$ is isomorphic to $\PGU(3,q)$; we have to shown that $3 \mid n$ and $\frac{m}{s}\mid(q^2-q+1)$.

The automorphism group $\PGU(3,q)$ of the Hermitian function field $\mathcal{H}_q$ is $2$-transitive on
the rational places of $\mathcal{H}_q$ over $\mathbb{F}_{q^2}$, and hence can be generated as $\PGU(3, q) = \langle \PGU(3,q)_{P_\infty^1}, \tau\rangle$,
where $\tau(x) = 1/x$ and $\tau(y) = \alpha y/x$ with $\alpha^{q+1}=-1$; see also \cite[Page 664, item 9]{HKT}. 
Since $C_{m/s}$ is normal in $\aut(\tilde{\mathcal{Y}}_{n,s})$, we apply Lemma \ref{normactionTILDE}. From $G/C_{m/s} = M_\ell$ we get that $\aut(\tilde{\mathcal{Y}}_{n,s})/C_{m/s}$ is isomorphic to $\PGU(3,q)$ if and only if $\tau$ can be lifted to an automorphism $\tilde{\tau}$ of $\tilde{\mathcal{Y}}_{n,s}$. Therefore, such $\tilde{\tau}$ exists. Its action on the places in $O \cup O_\infty$ is the same as the action of $\tau$ on the corresponding places of $\mathcal{H}_q^{(n)}$ below the places in $O \cup O_\infty$, since these are precisely the totally ramified places in the extension $\aut(\tilde{\mathcal{Y}}_{n,s})/\mathcal{H}_q^{(n)}$.
Note that
\begin{equation}\label{eq:div_z}
(z)_{\tilde{\mathcal{Y}}_{n,s}}=\sum_{\substack{a \in \Fqq}} \sum_{b^{q+1} = a^{q+1} - 1} \tilde{P}_{(a,b,0)} - (q^2-q) \cdot \sum_{j=1}^{q+1} \tilde{P}_{\infty}^j,
\end{equation}
where $\tilde{P}_{(a,b,0)}$ denotes the place above $P_{(a,b)}$ in $\tilde{\mathcal{Y}}_{n,s}/\mathcal{H}_q^{(n)}$.
Hence 
$$(\tilde{\tau}(z))_{\tilde{\mathcal{Y}}_{n,s}}=\sum_{\substack{a \in \Fqq \\ a \ne 0}} \sum_{b^{q+1} = a^{q+1} - 1} \tilde{P}_{(a,b,0)} +\sum_{j=1}^{q+1} \tilde{P}_{\infty}^j  - (q^2-q) \cdot \sum_{b^{q+1} = - 1} \tilde{P}_{(0, b,0)},$$
which implies that
$$\bigg(\frac{z}{\tilde{\tau}(z)}\bigg)_{\tilde{\mathcal{Y}}_{n,s}}=(q^2-q+1)\cdot \bigg(\sum_{b^{q+1} = - 1} \tilde{P}_{(0,b,0)}-\sum_{j=1}^{q+1} \tilde{P}_{\infty}^j \bigg).$$
On the other hand $$(x)_{\mathcal{H}_q}=\sum_{b^{q+1} = - 1} {P}_{(0,b)}-\sum_{j=1}^{q+1} {P}_{\infty}^j,$$
which implies that
$$(x)_{\tilde{\mathcal{Y}}_{n,s}}=\frac{m}{s}\bigg(\sum_{b^{q+1} = - 1} \tilde{P}_{(0,b,0)}-\sum_{j=1}^{q+1} \tilde{P}_{\infty}^j\bigg).$$
This implies that the order $d$ of the class of the divisor $$\sum_{b^{q+1} = - 1} \tilde{P}_{(0,b,0)}-\sum_{j=1}^{q+1} \tilde{P}_{\infty}^j$$ in the Picard group of $\tilde{\mathcal{Y}}_{n,s}$ divides $\gcd(m/s, q^2-q+1).$ It is easy to see that $\gcd(q^n+1,q^3+1)$ equals $q^3+1$ if $3 \mid n$ and $q+1$ otherwise. Hence $\gcd(m,q^2-q+1)$ equals $q^2-q+1$ if $3 \mid n$ and $1$ otherwise. This implies that the order $d$ divides $m/s$ and is strictly less than $m/s$, unless $3 \mid n$ and $m/s \mid q^2-q+1$. We can therefore finish the proof by showing that $d<m/s$ cannot occur. 
Assume by contradiction that $d<m/s$. Then there exists $f \in \tilde{\mathcal{Y}}_{n,s}$ such that $$(f)_{\tilde{\mathcal{Y}}_{n,s}}=d\bigg(\sum_{b^{q+1} = - 1} \tilde{P}_{(0,b,0)}-\sum_{j=1}^{q+1} \tilde{P}_{\infty}^j\bigg).$$
Denote with $\rho$ the function in $\mathcal{H}_q$ given by $\rho:=x+y$. Then \cite[Equation (3.1)]{BM} implies
\begin{equation}\label{eq:div_rho}
(\rho)_{\tilde{\mathcal{Y}}_{n,s}}=q\frac{m}{s}\tilde{P}_{\infty}^{1} - \frac{m}{s} \sum_{i=2}^{q+1} \tilde{P}_{\infty}^i.
\end{equation}
In particular
$$(f/\rho)_{\tilde{\mathcal{Y}}_{n,s}}=d\sum_{b^{q+1} = - 1} \tilde{P}_{(0,b,0)}-\bigg(q\frac{m}{s}+d\bigg)\tilde{P}_{\infty}^{1} + \bigg(\frac{m}{s}-d\bigg) \sum_{i=2}^{q+1} \tilde{P}_{\infty}^i.$$
We deduce that $qm/s+d \in H(\tilde{P}_{\infty}^{1})$. Since $\tilde{P}_{\infty}^{1}$ is totally ramified in $\tilde{\mathcal{C}}_n/\tilde{\mathcal{Y}}_{n,s}$ (recall that $\tilde{\mathcal{C}}_n$ is the BM function field), one has that $qm+sd$ is a non-gap at the place $\tilde{Q}_\infty^{(1)}$ of $\tilde{\mathcal{C}}_n$ above $\tilde{P}_{\infty}^{1}$. From \cite[Theorem 1.1]{MPL}, all positive nongaps of $\tilde{Q}_\infty^{(1)}$ strictly smaller than $q^n+1$ are of the form $mq+j(q^2-q)$ for a positive integer $j$. Since $d<m/s$, we have $sd < m$ and therefore $qm+sd<q^n+1$. Hence we conclude that $q^2-q$ divides $sd$.
This is a contradiction, since both $s$ and $d$  divide $m$ and $q$ does not divide $m$. 
The claim is proved.
\end{proof}

Thus, in order to prove Theorem \ref{mainYtilde} it is sufficient to show that $\aut(\tilde{\mathcal{Y}}_{n,s})$ acts on $O \cup O_\infty$. We start by showing that, unless $\aut(\tilde{\mathcal{Y}}_{n,s})=G$, the $\aut(\tilde{\mathcal{Y}}_{n,s})$-orbit containing $O_\infty$ is larger than $O_\infty$. We denote by $\aut(\tilde{\mathcal{Y}}_{n,s})_{O_\infty}$ the subgroup of $\aut(\tilde{\mathcal{Y}}_{n,s})$ stabilizing $O_\infty$ setwise.

\begin{lemma} \label{stabinfYtilda}
We have $\aut(\tilde{\mathcal{Y}}_{n,s})_{O_\infty}=G$. Thus, either $\aut(\tilde{\mathcal{Y}}_{n,s})=G$ or there exists a rational place $P$ of $\tilde{\mathcal{Y}}_{n,s}$ not in  $O_\infty$ such that $P$ is in the $\aut(\tilde{\mathcal{Y}}_{n,s})$-orbit containing $O_\infty$. 
\end{lemma}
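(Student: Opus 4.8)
The plan is to establish the nontrivial inclusion $\aut(\tilde{\mathcal{Y}}_{n,s})_{O_\infty}\subseteq G$; the reverse inclusion is immediate, since $O_\infty$ is a $G$-orbit by Theorem \ref{lifttildeY} and hence $G$ stabilises it setwise. Write $\Gamma:=\aut(\tilde{\mathcal{Y}}_{n,s})_{O_\infty}$ and let $T$ be the pointwise stabiliser of $O_\infty$ in $\Gamma$, i.e. the kernel of the natural permutation action $\Gamma\to\Sym(O_\infty)$; thus $T\trianglelefteq\Gamma$. First I would record that $C_{m/s}\subseteq T$: by the remark following Proposition \ref{genusYtilde} each place of $O_\infty$ is the unique (totally ramified) place of $\tilde{\mathcal{Y}}_{n,s}$ lying over the corresponding place $P_\infty^j$ of $\mathcal{H}_q^{(n)}$, and since $C_{m/s}$ fixes $\mathcal{H}_q^{(n)}=\mathbb{F}_{q^{2n}}(x,y)$ elementwise, it fixes each place of $O_\infty$.

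The key step is to prove that $C_{m/s}$ is normal in $\Gamma$. For this, note that $T$ fixes all $q+1>1$ places of $O_\infty$. As $\tilde{\mathcal{Y}}_{n,s}$ is maximal it has $p$-rank zero, so by \cite[Lemma 11.129]{HKT} every nontrivial $p$-element fixes exactly one place; hence $T$ contains no nontrivial $p$-element, i.e. $\gcd(|T|,p)=1$. Since $T$ fixes the place $\tilde{P}_\infty^1$, Lemma \ref{highnormal}(1) writes its stabiliser in $\aut(\tilde{\mathcal{Y}}_{n,s})$ as $S\rtimes H$ with $S$ the unique (normal) Sylow $p$-subgroup and $H$ cyclic; being a $p'$-group, $T$ meets $S$ trivially and therefore embeds into $H$, so $T$ is cyclic. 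In a cyclic group there is a unique subgroup of each order, so $C_{m/s}$ is the unique subgroup of $T$ of order $m/s$ and is thus characteristic in $T$. Being characteristic in the normal subgroup $T$, the group $C_{m/s}$ is normal in $\Gamma$.

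Next I would pass to the quotient. Since $C_{m/s}\trianglelefteq\Gamma$, every element of $\Gamma$ induces an automorphism of the fixed field $\mathrm{Fix}(C_{m/s})=\mathcal{H}_q^{(n)}$, and the kernel of this action consists of the automorphisms fixing $x$ and $y$, that is, of $\gal(\tilde{\mathcal{Y}}_{n,s}/\mathcal{H}_q^{(n)})=C_{m/s}$; hence $\Gamma/C_{m/s}$ embeds in $\aut(\mathcal{H}_q^{(n)})=\PGU(3,q)$. Because $\Gamma$ stabilises $O_\infty$ setwise, $\Gamma/C_{m/s}$ stabilises the set $\{P_\infty^1,\ldots,P_\infty^{q+1}\}$ of places of $\mathcal{H}_q^{(n)}$ below $O_\infty$. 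By \cite[Theorem A.10]{HKT} the setwise stabiliser of these $q+1$ places in $\PGU(3,q)$ is the maximal subgroup $M_\ell$ of order $q(q-1)(q+1)^2$, and $M_\ell=G/C_{m/s}$ as in the proof of Lemma \ref{normactionTILDE}. Therefore $\Gamma/C_{m/s}\subseteq G/C_{m/s}$; combined with $G\subseteq\Gamma$ and $C_{m/s}\subseteq G$, the lattice correspondence forces $\Gamma=G$.

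Finally, the dichotomy follows formally: if $\aut(\tilde{\mathcal{Y}}_{n,s})\neq G=\Gamma$, pick $g\in\aut(\tilde{\mathcal{Y}}_{n,s})$ not stabilising $O_\infty$, so that $g(\tilde{P})\notin O_\infty$ for some $\tilde{P}\in O_\infty$; by Lemma \ref{prankpel} the place $g(\tilde{P})$ is again rational, and it lies in the $\aut(\tilde{\mathcal{Y}}_{n,s})$-orbit of $O_\infty$, which is the required place $P$. I expect the main obstacle to be the normality argument of the second paragraph: once $C_{m/s}\trianglelefteq\Gamma$ is in hand, everything reduces mechanically to the known action of $\PGU(3,q)$ on the Hermitian function field, so the crux is combining the $p$-rank-zero property with the local structure of place-stabilisers to force the pointwise stabiliser $T$ to be a cyclic $p'$-group in which $C_{m/s}$ is characteristic.
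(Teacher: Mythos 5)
Your proposal is correct and follows essentially the same route as the paper: both show the pointwise stabiliser $T$ of $O_\infty$ is a normal cyclic $p'$-subgroup (via $p$-rank zero and Lemma \ref{highnormal}), deduce that $C_{m/s}$ is characteristic in $T$ and hence normal in the setwise stabiliser, and then identify the quotient inside $\PGU(3,q)$ with the stabiliser $M_\ell$ of the $q+1$ places below $O_\infty$, whose order equals $|G/C_{m/s}|$. Your write-up merely makes explicit a few steps the paper delegates to the proof of Lemma \ref{normactionTILDE}.
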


\begin{proof}
Recall that $O \cup O_\infty$ contains exactly those places of $\tilde{\mathcal{Y}}_{n,s}$ fixed  by $C_{m/s}$. Let 
$$T_\infty:=\{\sigma \in \aut(\tilde{\mathcal{Y}}_{n,s})_{O_\infty} \mid \sigma(\tilde{P})=\tilde{P}, \ \textrm{for  all} \ \tilde{P} \in O_\infty \}\leq \aut(\tilde{\mathcal{Y}}_{n,s})_{O_\infty} .$$
Then, arguing as in the proof of Lemma \ref{normactionTILDE}, $T_\infty$ is normal in $\aut(\tilde{\mathcal{Y}}_{n,s})_{O_\infty}$. Also, the order of $T_\infty$ is coprime with the characteristic $p$ because $|O_\infty|>1$ and $\tilde{\mathcal{Y}}_{n,s}$ has $p$-rank zero, see \cite[Lemma 11.129]{HKT}.  Then $T_\infty$ is cyclic by Lemma \ref{highnormal}. Also, $C_{m/s}$ is a characteristic subgroup of $T_\infty$ and thus a normal subgroup of $\aut(\tilde{\mathcal{Y}}_{n,s})_{O_\infty}$.  
The fixed field of $C_{m/s}$ is $\mathcal{H}_q^{(n)}=\mathbb{F}_{q^{2n}}(x,y)$, and $\aut(\mathcal{H}_q^{(n)}) \cong \PGU(3,q)$; see \cite[Proposition 11.30]{HKT}. Hence $\aut(\tilde{\mathcal{Y}}_{n,s})_{O_\infty}/C_{m/s}$ contains $G/C_{m/s}$ and acts faithfully as a subgroup of $\PGU(3,q)$ on the set $\ell$ of the $q+1$ places of $\mathcal{H}_q^{(n)}$ below $O_\infty$.
As $\PGU(3,q)_{\ell}$ has order $q(q-1)(q+1)^2=|G/C_{m/s}|$ and is maximal in $\PGU(3,q)$ (see \cite[Theorem A.10]{HKT} and \cite[Theorem 3.1]{MZ}), we conclude that $\aut(\tilde{\mathcal{Y}}_{n,s})_{O_\infty}=G$.
\end{proof}

\begin{remark} \label{rem1}
Let $\Sigma$ be the $\aut(\tilde{\mathcal{Y}}_{n,s})$-orbit containing $O_\infty$; note that $\Sigma$ is a short orbit of $\aut(\tilde{\mathcal{Y}}_{n,s})$. By Lemma \ref{stabinfYtilda}, we can assume that $\Sigma$ contains a rational place (i.e., an $\FF_{q^{2n}}$-rational place) $P$ of $\tilde{\mathcal{Y}}_{n,s}$ with $P\notin O_\infty$. 
Also, if all such places $P$ lie in $O$, then $\Sigma=O \cup O_\infty$, meaning that $\aut(\tilde{\mathcal{Y}}_{n,s})$ acts on $O \cup O_\infty$, which is what we want to prove.

Therefore, in order to prove Theorem \ref{mainYtilde}, we assume from now on that $\Sigma$ contains a rational place out of $O_\infty \cup O$.
This implies that $\Sigma$ contains at least one long $G$-orbit, so that 
\begin{equation}
|\Sigma| \ge |O_\infty|+|G|=q+1+\frac{(q^n+1)(q^2-1)q}{s}.
\end{equation}
Together with $|G_{\tilde{P}_\infty^1}|=q(q-1)(q^n+1)/s$ and the orbit-stabilizer theorem, this yields
\begin{eqnarray*}
|\aut(\tilde{\mathcal{Y}}_{n,s})| & \ge & \frac{q(q-1)(q^n+1)}{s} \bigg(q+1+\frac{(q^n+1)(q^2-1)q}{s}\bigg)\\
& > & 2q g(\tilde{\mathcal{Y}}_{n,s})+\frac{2q^2(q-1)(q^n+1)}{s} \bigg(g(\tilde{\mathcal{Y}}_{n,s})+\frac{q^3+1}{2}\bigg)\\
& > & 2q g(\tilde{\mathcal{Y}}_{n,s})+\frac{2q^2(q-1)(q^n+1)}{s} g(\tilde{\mathcal{Y}}_{n,s})+\frac{2(q^3+1)q^2}{q+1}g(\tilde{\mathcal{Y}}_{n,s})\\
&>&84(g(\tilde{\mathcal{Y}}_{n,s})-1).
\end{eqnarray*}
Here we used the following: if $q>2$, then $m/s \ge 2$; if $q=2$, then $m/s\geq5$, because $m$ is odd and $m/s$ does not divide $3$ by Remark \ref{aspetta}.
The strategy will be to find a contradiction to such inequality.
\end{remark}

The next lemmas focus on the short orbits of  $\aut(\tilde{\mathcal{Y}}_{n,s})$.

\begin{lemma} \label{onenontameTilde}
The short orbit $\Sigma$ is the only non-tame orbit of $\aut(\tilde{\mathcal{Y}}_{n,s})$.
\end{lemma}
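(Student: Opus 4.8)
The plan is to show that every non-trivial automorphism fixing a place outside $\Sigma$ must have order coprime to $p$, so that the only place where $p$-elements of $\aut(\tilde{\mathcal{Y}}_{n,s})$ can ramify lies in $\Sigma$, making $\Sigma$ the unique non-tame orbit. Concretely, I would invoke Corollary \ref{1actionp2}: since $\tilde{\mathcal{Y}}_{n,s}$ is $\FF_{q^{2n}}$-maximal of genus $g\ge 2$ with $p\mid|\aut(\tilde{\mathcal{Y}}_{n,s})|$, any $p$-subgroup $H$ of $\aut(\tilde{\mathcal{Y}}_{n,s})$ fixes exactly one place, which is moreover $\FF_{q^{2n}}$-rational, and acts semiregularly on all other $\FF_{q^{2n}}$-rational places. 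In particular a Sylow $p$-subgroup $S$ of the stabilizer $G_{\tilde P_\infty^1}$, which has order $q$ by Theorem \ref{lifttildeY}, fixes $\tilde P_\infty^1\in O_\infty\subseteq\Sigma$ and nothing else. The key point is that a non-tame orbit is precisely one on which some non-trivial $p$-element ramifies, i.e. has a non-trivial stabilizer; so I must rule out any $p$-element fixing a place outside $\Sigma$.

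First I would argue by the Hurwitz-bound machinery set up in Remark \ref{rem1}: under the standing assumption that $\Sigma$ contains a rational place outside $O_\infty\cup O$, we have $|\aut(\tilde{\mathcal{Y}}_{n,s})|>84(g-1)$, so Theorem \ref{Hurwitz}(iv) applies. This forces $p>0$, $\mathrm{Fix}(\aut(\tilde{\mathcal{Y}}_{n,s}))$ rational, and at most three short orbits, each of the restricted types listed there. Since $G\le\aut(\tilde{\mathcal{Y}}_{n,s})$ already exhibits $O_\infty$ and $O$ as short orbits which merge into $\Sigma$ (and possibly one more short orbit $O$), I would use the classification in Theorem \ref{Hurwitz}(iv) to limit the number and type of short orbits. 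The non-tame orbits are the ones carrying the wild ramification of a Sylow $p$-subgroup.

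The decisive step is to locate where the Sylow $p$-subgroups fix places. By Corollary \ref{1actionp2}, distinct Sylow $p$-subgroups fix distinct $\FF_{q^{2n}}$-rational places and intersect trivially, and each such $p$-subgroup fixes exactly one place. The subgroup $E_q\le \SL(2,q)\cong S_\ell$ of order $q$ fixes $\tilde P_\infty^1$ (by the discussion preceding Lemma \ref{contr1TILDE}), and by the $2$-transitivity of $\SL(2,q)$ on the $q+1$ places of $O_\infty$ together with the conjugacy of Sylow $p$-subgroups, every place of $O_\infty$ is fixed by some conjugate $p$-subgroup. Hence all the wild ramification coming from $G$ is concentrated on $O_\infty$, which lies inside $\Sigma$. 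The orbit $O$, on the other hand, is tame: its stabilizer in $G$ has order $(q^n+1)(q^2-1)q/|O|$ divided appropriately, and more directly $O$ is the $G$-orbit of the zeros of $z$ on which the cyclic $C_{(q^n+1)/s}$ acts, whose stabilizer is a $p'$-group by the Kummer structure; I would verify that the places of $O$ have stabilizer of order prime to $p$ using the divisor computation \eqref{eq:div_z} and the semiregularity in Corollary \ref{1actionp2}.

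The main obstacle I anticipate is ruling out a non-tame short orbit disjoint from $\Sigma$, equivalently, ensuring no Sylow $p$-subgroup fixes a place outside $\Sigma$. For this I would combine the fact that $\Sigma$ is a single $\aut(\tilde{\mathcal{Y}}_{n,s})$-orbit containing $O_\infty$ with Corollary \ref{1actionp2}: if some non-trivial $p$-element fixed a place $Q\notin\Sigma$, then a Sylow $p$-subgroup containing it would fix $Q$, and since all Sylow $p$-subgroups are conjugate and the one fixing $\tilde P_\infty^1$ fixes a place of $\Sigma$, conjugation would send $Q$ into $\Sigma$, contradicting $\aut(\tilde{\mathcal{Y}}_{n,s})$-invariance of the orbit decomposition (as $Q$ and $\tilde P_\infty^1$ lie in different orbits but are related by an automorphism). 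This conjugacy argument, parallel to the one already used in the proof of Corollary \ref{1actionp2}, should close the gap and show $\Sigma$ is the unique non-tame orbit.
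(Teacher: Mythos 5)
Your proposal is correct and its decisive step --- combining Corollary \ref{1actionp2} (each $p$-subgroup fixes exactly one place) with the conjugacy of Sylow $p$-subgroups to show that any place with wild stabilizer must lie in the orbit of $\tilde P_\infty^1$ --- is exactly the argument the paper uses. The intermediate material on the Hurwitz bound and on $O$ being tame is superfluous for this lemma but does no harm.
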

\begin{proof}
The short orbit $\Sigma$ of $\aut(\tilde{\mathcal{Y}}_{n,s})$ is non-tame, since the stabilizer of $\tilde{P}_\infty^1$ contains 
$E_q$ from Equation \eqref{eq:Eq} 
and therefore has order divisible by $p$.
Let $S_1$ be the Sylow $p$-subgroup of $\aut(\tilde{\mathcal{Y}}_{n,s})_{\tilde{P}_\infty^1}$, which is also a Sylow $p$-subgroup of $\aut(\tilde{\mathcal{Y}}_{n,s})$ by Corollary \ref{1actionp2}.
Suppose by contradiction that there exists another non-tame short orbit $\bar \Sigma \ne \Sigma$ of $\aut(\tilde{\mathcal{Y}}_{n,s})$. 
Let $\bar{P}\in\bar{\Sigma}$. As in the proof of Corollary \ref{1actionp2}, there is a Sylow $p$-subgroup $\bar{S}$ of $\aut(\tilde{\mathcal{Y}}_{n,s})$ stabilizing $\bar{P}$. The Sylow $p$-subgroups $S_1,\bar{S}$ are conjugate, say $\alpha^{-1}\bar{S}\alpha=S_1$ with $\alpha\in \aut(\tilde{\mathcal{Y}}_{n,s})$. Then $\alpha(\tilde{P}_\infty^1)=\bar{P}$, a contradiction to $\Sigma,\bar{\Sigma}$ being distinct orbits of $\aut(\tilde{\mathcal{Y}}_{n,s})$.
\end{proof}

\begin{lemma} \label{numborbitsTilde}
Suppose that $\Sigma$ contains a rational place not in $O_\infty \cup O$. Then $\aut(\tilde{\mathcal{Y}}_{n,s})$ has exactly two short orbits, namely the non-tame short orbit $\Sigma$ and a tame short orbit.
\end{lemma}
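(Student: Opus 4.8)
The plan is to feed the standing hypothesis into the classification of Theorem~\ref{Hurwitz}(iv). By Remark~\ref{rem1} the assumption that $\Sigma$ contains a rational place outside $O\cup O_\infty$ gives $|\aut(\tilde{\mathcal Y}_{n,s})|>84(g(\tilde{\mathcal Y}_{n,s})-1)$, so $\mathrm{Fix}(\aut(\tilde{\mathcal Y}_{n,s}))$ is rational and $\aut(\tilde{\mathcal Y}_{n,s})$ has at most three short orbits, lying in one of the cases (1)--(4) of Theorem~\ref{Hurwitz}(iv). Lemma~\ref{onenontameTilde} tells us that $\Sigma$ is the \emph{unique} non-tame short orbit, so case (2) (two non-tame orbits) is impossible. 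It remains to discard the one-orbit case (3) and the three-orbit case (1); what then survives is exactly case (4), a single non-tame short orbit $\Sigma$ together with a single tame one.

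To eliminate case (3) I would compare orbit lengths. By Remark~\ref{rem1}, $\Sigma$ contains a full long orbit of the group $G\cong\SL(2,q)\rtimes C_{(q^n+1)/s}$ from Theorem~\ref{lifttildeY}, so $|\Sigma|\ge|G|=q(q^2-1)(q^n+1)/s$. Clearing denominators against the genus of Proposition~\ref{genusYtilde} gives
\[
s|G|-s\bigl(2g(\tilde{\mathcal Y}_{n,s})-2\bigr)=q^n(q-1)^2(q+1)+(s+1)q^3-q^2-q+s+1>0,
\]
hence $|G|>2g(\tilde{\mathcal Y}_{n,s})-2$. Since case (3) of Theorem~\ref{Hurwitz}(iv) forces the unique short orbit to have length dividing $2g-2$, while $|\Sigma|\ge|G|>2g(\tilde{\mathcal Y}_{n,s})-2>0$, no such divisibility can hold, and case (3) is excluded.

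For case (1), consider the orbit $\Sigma'$ of $\aut(\tilde{\mathcal Y}_{n,s})$ containing $O$. Each place of $O$ is fixed by $C_{m/s}$, so its stabilizer has order at least $m/s$; as $m=(q^n+1)/(q+1)$ is odd (because $n$ is odd), $m/s$ is odd, and by Remark~\ref{rem1} $m/s\ge3$. Thus $\Sigma'$ is a short orbit all of whose point-stabilizers have order $\ge3$. The decisive point is that $\Sigma'\ne\Sigma$; granting this, Lemma~\ref{onenontameTilde} makes $\Sigma'$ tame, and case (1) is excluded because there every tame short orbit has length $|\aut(\tilde{\mathcal Y}_{n,s})|/2$, i.e., point-stabilizers of order exactly $2$, contradicting the order-$\ge3$ stabilizers of $\Sigma'$. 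With (1), (2), (3) all excluded, only case (4) remains: two short orbits, the non-tame $\Sigma$ and the tame $\Sigma'\supseteq O$.

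The main obstacle is the inequality $\Sigma'\ne\Sigma$, equivalently that the places of $O$ are tame. I would prove it through the orbit invariance of Weierstrass semigroups: two places in a common $\aut(\tilde{\mathcal Y}_{n,s})$-orbit have equal semigroups. At $\tilde P_\infty^1$, Equation~\eqref{eq:div_rho} shows that $1/\rho$ has a single pole, of order $qm/s$, and the nongap description of the BM field used in Proposition~\ref{fineTILDE} identifies $qm/s$ as the least positive nongap there. Computing, via Lemma~\ref{gapsdiff} and the Kummer structure of $\tilde{\mathcal Y}_{n,s}/\mathcal H_q^{(n)}$, the least positive nongap at a place of $O$ (a simple zero of $z$) yields a different value, so $H(\tilde P_\infty^1)\ne H(P)$ for $P\in O$ and these places cannot share an orbit; hence $O\not\subseteq\Sigma$. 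This semigroup comparison is the computational heart of the argument, and once it is secured the case analysis above finishes the proof.
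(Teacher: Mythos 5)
Your treatment of cases (2) and (3) of Theorem \ref{Hurwitz}(iv) matches the paper exactly: case (2) falls to Lemma \ref{onenontameTilde}, and case (3) falls because $|\Sigma|\ge|O_\infty|+|G|>2g-2$ (your algebra verifying $|G|>2g-2$ is correct). The problem is your exclusion of case (1). You reduce it to the claim $\Sigma'\ne\Sigma$, i.e.\ that $O$ and $O_\infty$ lie in different $\aut(\tilde{\mathcal{Y}}_{n,s})$-orbits, and you propose to prove this by comparing Weierstrass semigroups at $\tilde P_\infty^1$ and at a place of $O$. But this is precisely the hardest step of the whole section: in the paper it is Case 2 ($i=1$) of the proof of Proposition \ref{normalizzanoTilde}, which occupies well over a page. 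There one does not compare ``least positive nongaps'' at all: one shows that the specific integer $Aqm/s+q^2-q$ (with $A=\lceil s(q^2-q)/m\rceil$) is a nongap at $\tilde P_\infty^1$ via the function $z/\rho^A$, and is a gap at $\tilde P\in O$ by constructing a regular differential $fdz$ with $v_{\tilde P}(fdz)=Aqm/s+q^2-q-1$, which in turn requires computing $(dz)$, producing the auxiliary functions $f_0,t_0$, and verifying the inequality $A\frac{m}{s}\ge q^2-q+2$ under the standing hypothesis $3\nmid n$ or $m/s\nmid(q^2-q+1)$. Your sketch neither identifies the least positive nongap at $\tilde P_\infty^1$ (it is not obviously $qm/s$) nor computes anything at a place of $O$ (note that the natural candidate functions such as $t_0$ and $f_0t_0^q$ have poles spread over all of $O_\infty$, so extracting a single-pole element of $H(\tilde P)$ is not immediate). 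As written, the exclusion of case (1) therefore rests on an unproven and genuinely difficult assertion, and importing it here would also front-load the content of the later proposition into this lemma.

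The paper's own exclusion of case (1) is much more elementary and avoids the semigroup machinery entirely: in case (1) one has $p\ge3$, hence $q>2$; the non-tame orbit $\Sigma$ contributes at least $|\Sigma|\,(|\aut(\tilde{\mathcal{Y}}_{n,s})_{\tilde P_\infty^1}|-1+q-1)$ to the different (since the first ramification group at $\tilde P_\infty^1$ has order at least $q$), while the two tame orbits of length $|\aut(\tilde{\mathcal{Y}}_{n,s})|/2$ contribute exactly $|\aut(\tilde{\mathcal{Y}}_{n,s})|$ in total. Plugging this into the Hurwitz genus formula with rational bottom field gives $2g-2\ge|\Sigma|(q-2)\ge(q+1+|G|)(q-2)$, which contradicts $|G|>2g-2$ once $q>2$. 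You should replace your case-(1) argument with an estimate of this kind, or else supply the full semigroup computation of Proposition \ref{normalizzanoTilde}, Case 2, which your sketch does not do.
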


\begin{proof}
By Remark \ref{rem1} and Theorem \ref{Hurwitz} we conclude that $\aut(\tilde{\mathcal{Y}}_{n,s})$ has at most three short orbits, the fixed field of $\aut(\tilde{\mathcal{Y}}_{n,s})$ has genus zero and one of the following cases occurs:
\begin{enumerate}
\item $p\geq3$, exactly three short orbits, two tame of length $|\aut(\tilde{\mathcal{Y}}_{n,s})|/2$ and one non-tame;
\item  exactly two short orbits, both non-tame; 
\item  only one short orbit which is non-tame, whose length divides $2g-2$; 
\item  exactly two short orbits, one tame and one non-tame.
\end{enumerate}

We start by observing that Case (2) cannot occur by Lemma \ref{onenontameTilde}. If Case (3) occurs then $|\Sigma|$ divides $2g-2$, which is not possible since $|\Sigma| \geq |O_\infty|+|G| =1+q+|G|>2g-2$.
Hence to complete the proof we have to rule out Case (1). 

Suppose by contradiction that Case (1) occurs; in particular, $q>2$. Since $q$ divides the order of $\aut(\tilde{\mathcal{Y}}_{n,s})_{\tilde{P}_\infty^1}$, the Hurwitz genus formula applied to $\tilde{\mathcal{Y}}_{n,s}/{\rm Fix}(\aut(\tilde{\mathcal{Y}}_{n,s}))$ 
implies that
\begin{eqnarray*}
2g(\tilde{\mathcal{Y}}_{n,s})-2 & \ge & -2|\aut(\tilde{\mathcal{Y}}_{n,s})|+ |\Sigma|(|\aut(\tilde{\mathcal{Y}}_{n,s})_{\tilde{P}_\infty^1}|-1+q-1)+2\frac{|\aut(\tilde{\mathcal{Y}}_{n,s})|}{2}(2-1)\\
& = & |\Sigma|(q-2) \ \ge \ (q+1+|G|)(q-2),
\end{eqnarray*}
using the orbit-stabilizer theorem for the first inequality. As $q>2$, this yields a contradiction.
\end{proof}

We are now in the position to complete the proof of Theorem \ref{mainYtilde}. 

\begin{proposition} \label{normalizzanoTilde}
The group $\aut(\tilde{\mathcal{Y}}_{n,s})$ acts on $O_\infty \cup O$.
\end{proposition}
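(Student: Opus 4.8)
The plan is to argue by contradiction, building on the reduction of Remark \ref{rem1}. So suppose the orbit $\Sigma=\aut(\tilde{\mathcal{Y}}_{n,s})\cdot\tilde{P}_\infty^1$ contains a rational place outside $O_\infty\cup O$. By Lemma \ref{numborbitsTilde} the group $\aut(\tilde{\mathcal{Y}}_{n,s})$ then has exactly two short orbits, the non-tame orbit $\Sigma$ and a tame orbit $\Delta$, and $\mathrm{Fix}(\aut(\tilde{\mathcal{Y}}_{n,s}))$ is rational. Write $N:=|\aut(\tilde{\mathcal{Y}}_{n,s})|$, let $e_0$ and $e_1$ be the orders of the stabilizers of $\tilde{P}_\infty^1\in\Sigma$ and of a place of $\Delta$, and put $t:=(q^n+1)/s$, so that $2g-2=(q^2-1)t-(q^3+1)$ by the computation in Lemma \ref{contr1TILDE}.

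First I would turn the Hurwitz genus formula \eqref{eq1} for $\tilde{\mathcal{Y}}_{n,s}/\mathrm{Fix}(\aut(\tilde{\mathcal{Y}}_{n,s}))$ into a single identity. Since $\mathrm{Fix}(\aut(\tilde{\mathcal{Y}}_{n,s}))$ is rational and ramification occurs only along $\Sigma\cup\Delta$, the tame orbit contributes $|\Delta|(e_1-1)=N-|\Delta|$ to the different, while by Hilbert's formula \eqref{eq1bis} the place $\tilde{P}_\infty^1$ contributes $(e_0-1)+d^{w}$, where $d^{w}:=\sum_{i\ge 1}(|\aut(\tilde{\mathcal{Y}}_{n,s})_{\tilde{P}_\infty^1}^{(i)}|-1)$ is the wild part coming from the Sylow $p$-subgroup $S_1=\aut(\tilde{\mathcal{Y}}_{n,s})_{\tilde{P}_\infty^1}^{(1)}$ (Lemma \ref{highnormal}). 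Using $|\Sigma|\,e_0=N$, the tame stabilizer orders cancel and one obtains the clean relation $2g-2=|\Sigma|(d^{w}-1)-|\Delta|$.

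Next I would compute the wild part $d^{w}$. By Lemma \ref{contr1TILDE} every nontrivial $\alpha\in E_q$ satisfies $i(\alpha)=t+1$, so $E_q\subseteq \aut(\tilde{\mathcal{Y}}_{n,s})_{\tilde{P}_\infty^1}^{(t)}$ but $E_q\cap\aut(\tilde{\mathcal{Y}}_{n,s})_{\tilde{P}_\infty^1}^{(t+1)}=\{\mathrm{id}\}$, whence $\aut(\tilde{\mathcal{Y}}_{n,s})_{\tilde{P}_\infty^1}^{(i)}\supseteq E_q$ for $1\le i\le t$. Using that $S_1$ fixes only $\tilde{P}_\infty^1$ and acts semiregularly elsewhere (Corollary \ref{1actionp2}), that the ramification quotients are elementary abelian (Lemma \ref{highnormal}), and that $g(\mathrm{Fix}(S_1))\ge 0$ in the Hurwitz formula for $\tilde{\mathcal{Y}}_{n,s}/\mathrm{Fix}(S_1)$, I would show $S_1=E_q$, so that $d^{w}=t(q-1)$ exactly. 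Since by Theorem \ref{lifttildeY} the only short $G$-orbits are $O_\infty$ (length $q+1$) and $O$ (length $q^3-q$), while every other $G$-orbit is long of length $|G|=q(q^2-1)t$, the $\aut(\tilde{\mathcal{Y}}_{n,s})$-orbits $\Sigma$ and $\Delta$ split into $G$-orbits as $|\Sigma|=(q+1)+\varepsilon\,(q^3-q)+k\,|G|$ and $|\Delta|=(1-\varepsilon)(q^3-q)+j\,|G|$, with $\varepsilon\in\{0,1\}$ (whether $O\subseteq\Sigma$ or $O\subseteq\Delta$) and integers $k\ge 1$, $j\ge 0$.

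Finally I would substitute $d^{w}=t(q-1)$ and these orbit lengths into $2g-2=|\Sigma|(d^{w}-1)-|\Delta|$. Matching the constant terms and the coefficients of $|G|$ determines $j$ in terms of $k$ and $\varepsilon$, after which the orbit--stabilizer relations $e_0=N/|\Sigma|$, $e_1=N/|\Delta|$, together with $q(q-1)t=|G_{\tilde{P}_\infty^1}|\mid e_0$ (Lagrange), constrain $N$ so tightly that no admissible value survives: either $e_1$ is forced to be non-integral, or $N$ is pushed to $N\ge 8g^3$, which contradicts Theorem \ref{thm:stichtenoth_henn} because $\tilde{\mathcal{Y}}_{n,s}$ is none of the four listed function fields. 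This contradiction shows that $\Sigma$ cannot contain a rational place outside $O_\infty\cup O$, so $\aut(\tilde{\mathcal{Y}}_{n,s})$ acts on $O_\infty\cup O$, as required. The step I expect to be the main obstacle is the rigidity of the wild inertia, namely proving $S_1=E_q$ (no extra wild ramification at $\tilde{P}_\infty^1$) and disposing of the sub-case $O\subseteq\Sigma$, where the places of $O$ acquire wild stabilizers and one must rule out $\aut(\tilde{\mathcal{Y}}_{n,s})$ enlarging to a $\PGU(3,q)$-type group, a possibility already excluded under our hypotheses by Proposition \ref{fineTILDE}.
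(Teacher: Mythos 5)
Your reduction to two short orbits, the Hurwitz identity $2g-2=|\Sigma|(d^{w}-1)-|\Delta|$, and the decomposition of $\Sigma$ and $\Delta$ into $G$-orbits all match the skeleton of the paper's argument for the sub-case $O\not\subseteq\Sigma$. However, there are two genuine gaps. First, the claim $S_1=E_q$ is asserted rather than proved, and it is one of the hard points: the paper never establishes it outright, but instead writes the index $r$ of $G_{\tilde{P}_\infty^1}$ in $\aut(\tilde{\mathcal{Y}}_{n,s})_{\tilde{P}_\infty^1}$ as $p^{e}r_1$ and eliminates $e\ge 1$ only through a delicate chain of estimates (the bound $4\tilde g+4$ for cyclic groups acting on ${\rm Fix}(S)$, a second application of Hurwitz via Lemma \ref{contr1TILDE}, and a case split on $p$ and $e$). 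Your sketch ``use $g({\rm Fix}(S_1))\ge 0$'' does not by itself force $S_1=E_q$. Moreover, even granting $d^{w}=t(q-1)$, the counting identity alone does not produce a contradiction: with $|\Sigma|=q+1+k|G|$ and $|\Delta|=(q^3-q)+j|G|$ it reduces to $j=k\bigl(t(q-1)-1\bigr)$, which has admissible solutions for every $k\ge 1$; the paper's Case 1 therefore has to dig into the arithmetic of the stabilizer indices $r$ and $h$ (showing $h\in\{1,2\}$ via the subgroup structure of $\PGU(3,q)$ fixing a place, etc.) before a contradiction appears.

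The more serious problem is the sub-case $O\subseteq\Sigma$. There your numerical identity is again consistent (it reduces to $j=k\bigl(t(q-1)-1\bigr)+q-1$), so no contradiction follows from counting, and your fallback --- invoking Proposition \ref{fineTILDE} to exclude a $\PGU(3,q)$-type enlargement --- is circular: that proposition assumes $C_{m/s}$ is normal in $\aut(\tilde{\mathcal{Y}}_{n,s})$, which by Lemma \ref{normactionTILDE} is equivalent to the statement you are proving. The paper disposes of this case by an entirely different mechanism, absent from your proposal: if $O$ and $O_\infty$ were in one orbit, then $H(\tilde{P}_\infty^1)=H(\tilde{P})$ for $\tilde{P}\in O$; the paper exhibits the explicit function $z/\rho^{A}$ with pole only at $\tilde{P}_\infty^1$, showing $Aqm/s+q^2-q\in H(\tilde{P}_\infty^1)$, and then constructs an explicit regular differential $f_A\,dz$ with $v_{\tilde{P}}(f_A\,dz)=Aqm/s+q^2-q-1$, so that by Lemma \ref{gapsdiff} this integer is a gap at $\tilde{P}$. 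Without an argument of this kind (or some other way to distinguish $\tilde{P}_\infty^1$ from the places of $O$ as potential orbit-mates), your proof cannot be completed.
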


\begin{proof}
We need to find a contradiction with the following facts, that we can assume because of the sequence of lemmas and propositions proven so far: $3\nmid n$ or $m/s \nmid (q^2-q+1)$; and $\aut(\tilde{\mathcal{Y}}_{n,s})$ has exactly two short orbits: one non-tame short orbit $\Sigma$ containing $O_\infty$ and of cardinality $|\Sigma|=q+1+i|O|+k|G|$ for some $i\in\{0,1\}$ and some $k \geq 1$, and one tame short orbit $O_1$ of cardinality $|O_1|=(1-i)|O|+\ell|G|$, for some $\ell \geq 0$.
We distinguish two cases.

\bigskip
\noindent 
\textbf{Case 1: $i=0$.} In this case $O_\infty\subsetneq\Sigma$ and $O\subseteq O_1$. 

Then, for any $Q \in O \subseteq O_1$, we get from the orbit-stabilizer theorem that
$$(q+1+k|G|)|\aut(\tilde{\mathcal{Y}}_{n,s})_{\tilde{P}_{\infty}^1}|=|\aut(\tilde{\mathcal{Y}}_{n,s})|=(|O|+\ell|G|)|\aut(\tilde{\mathcal{Y}}_{n,s})_Q|,$$
that is
$$\bigg(1+k\frac{q(q-1)(q^n+1)}{s}\bigg)|\aut(\tilde{\mathcal{Y}}_{n,s})_{\tilde{P}_{\infty}^1}|=\bigg(q^2-q+\ell \frac{q(q-1)(q^n+1)}{s} \bigg)|\aut(\tilde{\mathcal{Y}}_{n,s})_Q|.$$
Note that $G_{\tilde{P}_{\infty}^1}\leq \aut(\tilde{\mathcal{Y}}_{n,s})_{\tilde{P}_{\infty}^1}$, where $|G_{\tilde{P}_{\infty}^1}|=q(q-1)(q^n+1)/s$; let $r\geq1$ be the index of $G_{\tilde{P}_{\infty}^1}$ in $ \aut(\tilde{\mathcal{Y}}_{n,s})_{\tilde{P}_{\infty}^1}$.
Also, note that 
$G_Q\leq\aut(\tilde{\mathcal{Y}}_{n,s})_Q$, where $G_Q\cong C_{(q^n+1)/s}$ is generated by $\alpha_{1,0,0,\xi^{m/s},\xi}$ for some primitive $(q^n+1)/s$-th root of unity $\xi$; let $h\geq1$ be the index of $G_Q$ in $\aut(\tilde{\mathcal{Y}}_{n,s})_Q$.
Then
%
\begin{equation} \label{ost}
\bigg(1+k\frac{q(q-1)(q^n+1)}{s}\bigg)r=\bigg(1+\ell \frac{(q^n+1)}{s} \bigg)h.
\end{equation}
%
Recall also that $\aut(\tilde{\mathcal{Y}}_{n,s})_Q$ is cyclic, since $O_1$ is a tame orbit.

If $\ell=0$, then Equation \eqref{ost} yields $h=\bigg(1+k\frac{q(q-1)(q^n+1)}{s}\bigg)r > \frac{q(q-1)(q^n+1)}{s}$; using $m/s >1$, this implies 
$|\aut(\tilde{\mathcal{Y}}_{n,s})_Q|>q(q-1)(q^n+1)^2/s^2>4g+4$. This is a contradiction by \cite[Theorem 11.79]{HKT}.

Thus, $\ell \geq 1$. Since $\aut(\tilde{\mathcal{Y}}_{n,s})_Q$ is cyclic and contains $\alpha_{1,0,0,\xi^{m/s},\xi}$, the subgroup $C_{m/s}=\langle \alpha_{1,0,0,\xi^{m/s},\xi}^{q+1} \rangle$ is normal in $\aut(\tilde{\mathcal{Y}}_{n,s})_Q$. This implies that $\aut(\tilde{\mathcal{Y}}_{n,s})_Q/C_{m/s}$ is an automorphism group of the Hermitian function field $\mathbb{F}_{q^2}(x,y)$, it fixes a rational place, and has order $h(q+1)$. By \cite[Theorem A.10]{HKT}, $h$ divides $q-1$. 

Suppose that $h > 2$ and let $b$ be a prime divisor of $h$ (and hence of $q-1$). Let $\alpha$ be an element in $\aut(\tilde{\mathcal{Y}}_{n,s})_Q$ of order $b$. 
From \cite{MZ2016} the automorphism $\alpha_1$ induced by $\alpha$ on $\mathbb{F}_{q^2}(x,y)$ has exactly two fixed places and they are both $\mathbb{F}_{q^2}$-rational, that is, corresponding to two places $Q,R\in O_\infty \cup O$ that are also fixed by $\alpha$. Furthermore, $\langle\alpha_1\rangle$ has no other short orbits on the Hermitian function field, so that $\langle\alpha\rangle$  has no other short orbits on $\tilde{\mathcal{Y}}_{n,s}$.
In particular, $O_1\setminus\{Q,R\}$ is divisible by $b$. This is a contradiction, as $b$ divides $|O_1|$ and $|O_1\setminus\{Q,R\}|$ is equal to $|O_1|-1$ or $|O_1|-2$.
Then, either $h=1$ or $h=2$ (and $p$ is odd in the latter case). 

From Equation \eqref{ost} taken modulo $(q^n+1)/s$, we have $r=t(q^n+1)/s+h$ with $t \geq 0$. 
If $t=0$, i.e. $r=h$, then $r$ is coprime with $p$, so that $E_q$ is a normal subgroup of $\aut(\tilde{\mathcal{Y}}_{n,s})_{\tilde{P}_{\infty}^1}$ being its Sylow $p$-subgroup, see Lemma \ref{highnormal}. Hence $\aut(\tilde{\mathcal{Y}}_{n,s})_{\tilde{P}_{\infty}^1}/E_q$ is an automorphism group of the function field $\mathbb{F}_{q^{2n}}(\rho,z)$ with genus $(q^2-1)(m/s-1)/2$. From \cite[Theorem 11.79]{HKT}, $|\aut(\tilde{\mathcal{Y}}_{n,s})_{\tilde{P}_{\infty}^1}/E_q|=r(q-1)(q^n+1)/s<4g(\mathbb{F}_{q^{2n}}(\rho,z))+4<2(q^n+1)(q-1)/s$. Hence $r=h=1$ and the Hurwitz genus formula together with Lemma \ref{contr1TILDE} gives
\begin{multline*}
\frac{(q^n+1)(q^2-1)}{s}-(q^3+1)=-2|\aut(\tilde{\mathcal{Y}}_{n,s})|+\bigg( q+1+k\frac{q(q^2-1)(q^n+1)}{s}\bigg)\bigg(|\aut(\tilde{\mathcal{Y}}_{n,s})_{\tilde{P}_{\infty}^1}| \\
-1+(q-1)\frac{q^n+1}{s}\bigg)+\bigg(q^3-q+\ell \frac{q(q^2-1)(q^n+1)}{s}\bigg)\bigg(|\aut(\tilde{\mathcal{Y}}_{n,s})_Q|-1\bigg),\end{multline*}
and hence
$$k\bigg((q-1)\frac{q^n+1}{s}-1\bigg)-\ell=0.$$
Combining this with Equation \eqref{ost} and $r=h=1$, we obtain $k=\ell=0$, a contradiction.

Then we can assume $r=t(q^n+1)/s+h$ with $t \geq 1$. Write $r=p^e r_1$ where $e \geq 0$, $\gcd(r_1,p)=1$. 
Since $\aut(\tilde{\mathcal{Y}}_{n,s})_{\tilde{P}_{\infty}^1}$ contains a cyclic subgroup of order $r_1(q-1)(q^n+1)/s$ we get $r_1<2(q+1)$ from \cite[Theorem 11.79]{HKT}. 
Then 
$$(q^n+1)/s+h \leq r=p^e r_1 < 2p^e(q+1),$$ 
implying that $p^e \geq \lceil((q^n+1)/s+h)/(2(q+1)) \rceil$.

Let $S$ be the Sylow $p$-subgroup of $\aut(\tilde{\mathcal{Y}}_{n,s})_{\tilde{P}_{\infty}^1}$, of order $qp^e$. 
Then $\aut(\tilde{\mathcal{Y}}_{n,s})_{\tilde{P}_{\infty}^1}/S$ is a cyclic automorphism group of order $(q-1)(q^n+1)r_1/s$ of the fixed field ${\rm Fix}(S)$ of $S$. It fixes the place of ${\rm Fix}(S)$ lying below $\tilde{P}_{\infty}^1$. If $\tilde g$ is the genus of ${\rm Fix}(S)$, \cite[Theorem 11.79]{HKT} implies
$$(q-1)(q^n+1)r_1/s \leq 4\tilde g+4.$$
The Hurwitz genus formula applied with respect to $S$, together with Lemma \ref{contr1TILDE}, gives
\begin{eqnarray}\label{inequalities}
\frac{(q^n+1)(q^2-1)}{s} & > & \frac{(q^n+1)(q^2-1)}{s}-(q^3+1) \notag\\
& \geq & qp^e(2\tilde g-2)+2qp^e-2+(q-1)\bigg(\frac{q^n+1}{s}-1\bigg)\notag\\
& \geq & \frac{qp^e((q-1)(q^n+1)r_1-4s)}{2s}-2+(q-1)\bigg(\frac{q^n+1}{s}-1\bigg)\\
& \geq & \frac{q^n+1}{s} \bigg( \frac{qp^e((q-1)r_1-\frac{4}{q+1})}{2}+q-2\bigg).\notag
\end{eqnarray} 
Note that $\lceil((q^n+1)/s+h)/(2(q+1)) \rceil >1$ so that $p^e \geq p$.
If $p\geq3$ or $e>1$, then one gets
$$\frac{(q^n+1)(q^2-1)}{s}>\frac{q^n+1}{s} \bigg( \frac{3q(q-2)}{2}+q-2\bigg),$$
which is not possible unless $q=p^e=3$.
So necessarily either $q=p^e=3$, or $p=p^e=2$.
In both cases, Equation \eqref{inequalities} yields
$$
q^2-1>\frac{qp^e((q-1)r_1-\frac{4}{q+1})}{2}+q-2.
$$
Since $\gcd(r_1,p)=1$, this implies $r_1=1$ in both cases $q=p^e=3$ and $p=p^e=2$.
Then
$$
q\geq p^e=r=t(q^n+1)/s+h\geq (q^n+1)/s+1\geq (q+1)+1,
$$
a contradiction.

\bigskip
\noindent
\textbf{Case 2: $i=1$.} This means that $O$ and $O_\infty$ are in the same orbit under $\aut(\tilde{\mathcal{Y}}_{n,s})$. This implies that $H(\tilde{P}_\infty^1)=H(\tilde{P})$ for all $\tilde{P} \in O$. We want to show that this is not possible.

We start by constructing an element of $H(\tilde{P}_\infty^1)$. Define 
$$A:=\left\lceil \frac{s(q^2-q)}{m} \right\rceil,\qquad B:=s-A \cdot \frac{m}{q^2-q}.$$
Note that $A \ge 1$ and $-m/(q^2-q)+1 \le B \le 0$.
Combining the divisors of the functions $z$ and $\rho$ on $\tilde{\mathcal{Y}}_{n,s}$ as given in Equations \eqref{eq:div_z} and \eqref{eq:div_rho}, we obtain the divisor 
$$\bigg(\frac{z}{\rho^A}\bigg)_{\tilde{\mathcal{Y}}_{n,s}}
=
\sum_{\substack{a \in \Fqq}} \sum_{b^{q+1} = a^{q+1} - 1} \tilde{P}_{(a,b,0)}
+
\left(A\frac{m}{s}-(q^2-q)\right)\sum_{j=2}^{q+1} \tilde{P}_{\infty}^j
-
\left(Aq\frac{m}{s}+q^2-q\right)\tilde{P}_{\infty}^1.$$
Since
$$A\frac{m}{s}-(q^2-q) \ge \frac{s(q^2-q)}{m}\frac{m}{s}-(q^2-q) \ge 0,$$
we obtain that $\tilde{P}_{\infty}^1$ is the only pole of $z/\rho^A$, so that $Aq m/s+q^2-q \in H(\tilde{P}_\infty^1)$. We claim that $Aq m/s+q^2-q \not\in H(\tilde{P})$ for $\tilde{P} \in O$, which will conclude the proof.
To this aim, it is sufficient by Lemma \ref{gapsdiff} to show that there exists a regular differential $\omega$ such that $v_{\tilde{P}}(\omega)=Aqm/s+q^2-q-1$.

Using Proposition \ref{prop:fixedfieldsH_q} it is easily seen that $\Fix(G)=\FF_{q^{2n}}(z^{(q^n+1)/s})$, so that, by Theorem \ref{lifttildeY}, the only places of $\mathbb{F}_{q^{2n}}(z)$ which are possibly ramified under $\tilde{\mathcal{Y}}_{n,s}$ lie below $O_\infty \cup O$, i.e. they are the zero and the pole of $z$. However, the zero of $z$ is unramified in the extension $\tilde{\mathcal{Y}}_{n,s}/\mathbb{F}_{q^{2n}}(z)$ of degree $|{\rm SL}(2,q)|=q^3-q$, by Equation \eqref{eq:div_z}.
Hence the unique ramified place in $\tilde{\mathcal{Y}}_{n,s}/\mathbb{F}_{q^{2n}}(z)$ is the pole of $z$, lying below the places in $O_\infty$. 
Moreover, $G$ has a subgroup of order $q+1$ acting sharply transitively on $O_\infty$, while fixing $z$. 
By Equation \eqref{eq:div_z}, this implies that the support of $(dz)$ is $O_\infty$. As $\deg(dz)=2g(\tilde{\mathcal{Y}}_{n,s})-2$ and $|O_\infty|=q+1$, we obtain that 
$$(dz)_{\tilde{\mathcal{Y}}_{n,s}}=\frac{2g(\tilde{\mathcal{Y}}_{n,s})-2}{q+1}(\tilde{P}_\infty^1+\ldots+\tilde{P}_\infty^{q+1}),$$
so that the differential $fdz$ is regular if and only if $f$ is in $L(\frac{2g(\tilde{\mathcal{Y}}_{n,s})-2}{q+1}(\tilde{P}_\infty^1+\ldots+\tilde{P}_\infty^{q+1}))$. 
Thus, we want to construct a function $f \in L(\frac{2g(\tilde{\mathcal{Y}}_{n,s})-2}{q+1}(\tilde{P}_\infty^1+\ldots+\tilde{P}_\infty^{q+1}))$ such that $v_{\tilde{P}}(f)=Aqm/s+q^2-q-1$.
Let $P:=P_{(a,b)}$ be the restriction of $\tilde{P}$ to the Hermitian function field $\mathcal{H}_q$.
Since $2g(\mathcal{H}_q)<q^2$, $q^2$ is a non-gap at $P$; 
let $f_0$ be a function on $\mathcal{H}_q$ with 
$$(f_0)_{\mathcal{H}_q}=-q^2P+E_0,\quad \textrm{where}\quad E_0\geq0,\quad P\notin{\rm supp}(E_0).$$
Defining the function $t_0:=a^q(x-a)+b^q(y-b)$, one has
$$(t_0)_{\mathcal{H}_q}=(q+1)P-\sum_{i=1}^{q+1} P_\infty^i,$$
and hence
\begin{equation}\label{eq:divt0}
(t_0)_{\tilde{\mathcal{Y}}_{n,s}}=(q+1)\frac{m}{s}\tilde{P}-\frac{m}{s}\sum_{i=1}^{q+1}\tilde{P}_\infty^i,\qquad
(f_0 \cdot t_0^q)_{\tilde{\mathcal{Y}}_{n,s}}=q\frac{m}{s}\tilde{P}+\tilde{E}_0-q\frac{m}{s}\sum_{i=1}^{q+1}\tilde{P}_\infty^i,
\end{equation}
where $\tilde{E}_0$ is an effective divisor obtained by lifting the divisor $E_0$ on $\tilde{\mathcal{Y}}_{n,s}$.

Recall that $3 \nmid n$ or $m/s \nmid q^2-q+1$. Since $s$ is odd, we can also assume that $2\leq m/s\leq m/3$, avoiding the trivial cases $s=1$ and $s=m$.
The number $A=\lceil s(q^2-q)/m\rceil$ satisfies $A=1$ or $A>1$ according respectively to $m/s \ge q^2-q$ or $m/s<q^2-q$. 
We construct the function
$$f_A:=t_0^{A-1}(f_0t_0^q)z^{-(A-1)m/s+q^2-q-1}.$$
Note that $-(A-1)m/s+q^2-q-1 > -(q^2-q)+q^2-q-1=-1$, so that the exponent $-(A-1)m/s+q^2-q-1$ is not negative.
Then, using Equations \eqref{eq:div_z} and \eqref{eq:divt0}, 
one gets
\begin{multline*}
(f_A)_{\tilde{\mathcal{Y}}_{n,s}} = 
\bigg(Aq\frac{m}{s}+q^2-q-1\bigg)\tilde{P} +\tilde{E}_A \\ -\bigg((-(A-1)q^2+Aq+A-1)\frac{m}{s}+(q^2-q-1)(q^2-q) \bigg) \sum_{i=1}^{q+1} \tilde{P}_\infty^i,
\end{multline*}
for some effective divisor $\tilde{E}_A$ whose support does not contain $\tilde{P}$. What we need to show is
$$(-(A-1)q^2+Aq+A-1)\frac{m}{s}+(q^2-q-1)(q^2-q) \le \frac{2g(\tilde{\mathcal{Y}}_{n,s})-2}{q+1}= (q^2-1)\frac{m}{s}-(q^2-q+1),$$
that is equivalent to
\begin{equation}\label{eq:poleorder_goal}
A\frac{m}{s} \ge q^2-q+1+\frac{2}{q^2-q-1}.
\end{equation}
By definition of $A$, the integer $A\frac{m}{s}$ is at least $q^2-q$.
If $A\frac{m}{s}=q^2-q$, then from $\gcd(m,q^2-q)=1$ we get $m/s=1$, contradicting the assumption $m/s\geq2$.
If $A\frac{m}{s}=q^2-q+1$, then $m/s$ divides $q^2-q+1$ and hence our assumptions imply $3\nmid n$; but $3\nmid n$ yields $\gcd(m,q^2-q+1)=1$, whence  $m/s=1$, a contradiction.
Then $A\frac{m}{s}\geq q^2-q+2$, and the inequality \eqref{eq:poleorder_goal} holds.
\end{proof}

This completes the proof of Theorem \ref{mainYtilde}. 
We highlight that the function fields $\tilde{\mathcal{Y}}_{n,s}$ are new examples of $\mathbb{F}_{q^{2n}}$-maximal function fields.

\begin{theorem}
Suppose that $3 \nmid n$ or $m/s \nmid q^2-q+1$. Then $\tilde{\mathcal{Y}}_{n,s}$ and $\mathcal{Y}_{n,s}$ are not isomorphic.
\end{theorem}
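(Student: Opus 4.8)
The plan is to exploit that the full automorphism group over the algebraic closure is an invariant under isomorphism of function fields; in particular, if $\tilde{\mathcal{Y}}_{n,s}$ and $\mathcal{Y}_{n,s}$ were isomorphic, then $\aut(\tilde{\mathcal{Y}}_{n,s})$ and $\aut(\mathcal{Y}_{n,s})$ would be isomorphic groups, and in particular would have the same order. So the entire argument reduces to comparing the two group orders under the standing hypothesis $3\nmid n$ or $m/s\nmid(q^2-q+1)$, and checking that they never coincide. All the real work has already been done in establishing the two structure theorems; what remains is a one-line numerical verification.

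Concretely, I would first invoke Theorem \ref{mainYtilde}: under the hypothesis $3\nmid n$ or $m/s\nmid(q^2-q+1)$, we have $\aut(\tilde{\mathcal{Y}}_{n,s})=G\cong\SL(2,q)\rtimes C_{(q^n+1)/s}$, of order $q(q^2-1)(q+1)m/s$. I would then invoke Theorem \ref{th:TMZ1} for the companion field: under the very same hypothesis, $\aut(\mathcal{Y}_{n,s})\cong S_{q^3}\rtimes C_{(q^2-1)m/s}$ has order $q^3(q^2-1)m/s$. Note that both theorems apply precisely on the hypothesis imposed in the statement, so there is no case analysis to carry out here.

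Finally I would compare the two orders. Dividing both by the common positive factor $(q^2-1)m/s$, the comparison reduces to $q(q+1)$ versus $q^3$, and since
\[
q^3-q(q+1)=q\left(q^2-q-1\right)>0
\]
for every prime power $q\ge 2$, the two orders are strictly different. Hence $\aut(\tilde{\mathcal{Y}}_{n,s})$ and $\aut(\mathcal{Y}_{n,s})$ are not isomorphic as abstract groups, and therefore $\tilde{\mathcal{Y}}_{n,s}$ and $\mathcal{Y}_{n,s}$ cannot be isomorphic function fields, as claimed.

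I do not expect any genuine obstacle: the content is entirely in Theorems \ref{mainYtilde} and \ref{th:TMZ1}, and the only thing to be careful about is that the inequality $q^3>q^2+q$ is strict for all admissible $q$ (including $q=2$), so that the orders genuinely separate in every case covered by the hypothesis. This is exactly the parallel, at the level of subfields, of the known fact that the BM and GGS function fields are distinguished by their automorphism groups.
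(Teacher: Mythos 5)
Your proposal is correct and follows exactly the route the paper intends: its proof of this theorem is just the citation of Theorems \ref{th:TMZ1} and \ref{mainYtilde}, and the content you supply is the explicit order comparison $q^3(q^2-1)\frac{m}{s}$ versus $q(q+1)(q^2-1)\frac{m}{s}$, which differ since $q^2>q+1$ for every prime power $q$. The only (shared) caveat is the degenerate value $s=m$, where both fields collapse to the Hermitian function field and Theorem \ref{th:TMZ1} is stated only for $s\ne m$; this case is implicitly excluded throughout.
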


\begin{proof}
This follows from Theorems \ref{th:TMZ1} and \ref{mainYtilde}.
\end{proof}

\begin{remark}
We already observed that $\tilde{\mathcal{Y}}_{n,s}$ and $\mathcal{Y}_{n,s}$ are isomorphic if $3$ divides $n$ and $m/s$ divides $q^2-q+1$. In particular, $\tilde{\mathcal{Y}}_{3,s}$ and $\mathcal{Y}_{3,s}$ are isomorphic. Therefore, by \cite[Theorem 4.4]{TTT}, we conclude that $\tilde{\mathcal{Y}}_{3,s}$ is not isomorphic to a subfield of the Hermitian function field $\cH_{q^3}$ whenever the divisor $s$ of $q^2 - q + 1$ satisfies $q > s(s+1)$.
\end{remark}

\section{The function field $\tilde{\mathcal{X}}_{a,b,n,s}$}\label{sec:X}

Let $q = p^a$ for a prime number $p$ and $a \geq 1$. Let $b$ be a divisor of $a$ and $f = a/b$. As in the previous section we let $n \geq 3$ be odd and $s$ be a divisor of $m = (q^n + 1)/(q + 1)$.

Our aim is to determine an analogue $\tilde{\mathcal{X}}_{a,b,n,s}$ of $\mathcal{X}_{a,b,n,s}$ from \cite{TTT} in the case of the BM function field. We will
construct $\tilde{\mathcal{X}}_{a,b,n,s}$ as the fixed field of a certain $p$-subgroup of $\aut(\tilde{\mathcal{Y}}_{n,s})$, just as $\mathcal{X}_{a,b,n,s}$
has been constructed in \cite{TTT} as the fixed field of a $p$-subgroup of $\aut({\mathcal{Y}}_{n,s})$.

In the proof of Theorem \ref{lifttildeY}, we determined a subgroup \( S_{\ell} \subseteq \text{Aut}(\tilde{\mathcal{Y}}_{n,s}) \) with \( S_{\ell} \cong \text{SL}(2, q) \) and \( |S_{\ell}| = q(q^2 - 1) \):
\begin{equation}\label{eq:Sl}
S_{\ell} = \{ (x, y, z) \mapsto (\lambda x + \mu^q y,\mu x + \lambda^q y, z) \mid \lambda,\mu \in \mathbb{F}_{q^2}, \lambda^{q+1} - \mu^{q+1} = 1 \}.
\end{equation}


Let $\tilde{E}_q$ be a Sylow $p$-subgroup of $S_\ell$, which is elementary abelian of order $q$. 
Up to conjugation in $S_\ell$, we can assume
\begin{equation}\label{eq:sylowX}
\tilde{E}_q = \{ \sigma_c:(x, y, z) \mapsto ((c+1)x + c y, -cx + (1-c)y, z) \mid c \in \mathbb{F}_{q^2}, c^q + c = 0 \},
\end{equation}
as the right-hand side of \eqref{eq:sylowX} is a subgroup of $S_\ell$ of order $q$.
Given $c_0\in\mathbb{F}_{q^2}$ with $c_0^{q-1}=-1$, we can construct a subgroup $\tilde{E}_{p^b}\leq\tilde{E}_q$ of order $p^b$ as
\begin{equation}\label{eq:Epb}
\tilde{E}_{p^b} := \{ (x, y, z) \mapsto \left( \left( (\beta/c_0) + 1 \right) x + (\beta/c_0) y, -(\beta/c_0) x + \left(1 - (\beta/c_0) \right)y, z\right) \mid \beta \in \mathbb{F}_{p^b} \}.
\end{equation}
Define $\rho:=x+y$, and note that $\sigma_c\in \tilde{E}_q$ acts on $\tilde{\mathcal{Y}}_{n,s}$ as
$$ \sigma_c:(x,y,z)\mapsto (x+c\rho,y-c\rho,z).$$
Then $\mathbb{F}_{q^{2n}}(\rho,z)$ is a subfield of ${\rm Fix}(\tilde{E}_q)$. In fact $\mathbb{F}_{q^{2n}}(\rho,z)={\rm Fix}(\tilde{E}_q)$, as
\begin{equation}\label{eq:rho}(x/\rho)^q+x/\rho=(\rho^{q+1}+1)/\rho^{q+1}\end{equation}
 and hence $[\tilde{\mathcal{Y}}_{n,s}:\mathbb{F}_{q^{2n}}(\rho,z)]=q$.
In the next lemma we determine the ${\rm Fix}(\tilde{E}_{p^b})$. 
To this aim, fix an element $c_0 \in \mathbb{F}_{q^2}$ with $c_0^{q-1}=-1$, and define $$h_{c_0}:=c_0x/\rho-(c_0x/\rho)^{p^b}.$$
\begin{lemma} \label{lemma41}
The fixed field of $\tilde{E}_{p^b}$ is $\mathbb{F}_{q^{2n}}(\rho,z,h_d)$.
\end{lemma}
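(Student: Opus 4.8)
The plan is to establish the two inclusions by first exhibiting $h_{c_0}$ as a function fixed by $\tilde{E}_{p^b}$, and then forcing equality of fields through a degree count, using that $\tilde{\mathcal{Y}}_{n,s}/\mathrm{Fix}(\tilde{E}_{p^b})$ is Galois of degree $|\tilde{E}_{p^b}|=p^b$. Throughout I would work with the single generator $t:=c_0x/\rho$, which already appears in the definition $h_{c_0}=t-t^{p^b}$; the gain of this substitution is that the action of $\tilde{E}_{p^b}$ on $\tilde{\mathcal{Y}}_{n,s}$ becomes translation of $t$ by the additive subgroup $\mathbb{F}_{p^b}$.

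First I would record how $\tilde{E}_q$ acts on $t$. Since $\sigma_c(x)=x+c\rho$ and $\sigma_c(\rho)=\rho$, one gets $\sigma_c(t)=c_0(x+c\rho)/\rho=t+c_0c$. For $\sigma_c\in\tilde{E}_{p^b}$ the parameter is $c=\beta/c_0$ with $\beta\in\mathbb{F}_{p^b}$, so $c_0c=\beta$ and hence $\sigma_c(t)=t+\beta$. Here the condition $c_0^{q-1}=-1$ is exactly what guarantees $c^q+c=0$ (so that these maps lie in $\tilde{E}_q$) and makes $c_0c$ land in $\mathbb{F}_{p^b}\subseteq\mathbb{F}_q$. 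Consequently, for every $\beta\in\mathbb{F}_{p^b}$, using $\beta^{p^b}=\beta$,
\[
\sigma_c(h_{c_0})=(t+\beta)-(t+\beta)^{p^b}=t+\beta-t^{p^b}-\beta=h_{c_0},
\]
so $h_{c_0}\in\mathrm{Fix}(\tilde{E}_{p^b})$. Since $\rho$ and $z$ are fixed already by the larger group $\tilde{E}_q$, this yields $L:=\mathbb{F}_{q^{2n}}(\rho,z,h_{c_0})\subseteq\mathrm{Fix}(\tilde{E}_{p^b})$.

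To close the gap I would compare degrees. From $x=\rho t/c_0$ and $y=\rho-x$ one sees $\tilde{\mathcal{Y}}_{n,s}=\mathbb{F}_{q^{2n}}(\rho,z,t)$, and the defining relation $h_{c_0}=t-t^{p^b}$ rewrites as $t^{p^b}-t+h_{c_0}=0$, exhibiting $t$ as a root of the additive polynomial $T^{p^b}-T+h_{c_0}\in L[T]$ of degree $p^b$. Hence $\tilde{\mathcal{Y}}_{n,s}=L(t)$ gives $[\tilde{\mathcal{Y}}_{n,s}:L]\le p^b$. On the other hand, $L\subseteq\mathrm{Fix}(\tilde{E}_{p^b})$ together with $[\tilde{\mathcal{Y}}_{n,s}:\mathrm{Fix}(\tilde{E}_{p^b})]=p^b$ forces $[\tilde{\mathcal{Y}}_{n,s}:L]\ge p^b$. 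Therefore $[\tilde{\mathcal{Y}}_{n,s}:L]=p^b$ and $L=\mathrm{Fix}(\tilde{E}_{p^b})$, which is the claim. I expect no serious obstacle: the only point needing care is the bookkeeping in the first step, namely verifying that $\tilde{E}_{p^b}$ translates $t$ by exactly the subgroup $\mathbb{F}_{p^b}$ (this is where $c_0^{q-1}=-1$ is used); the remainder is a standard Artin--Schreier-type degree argument.
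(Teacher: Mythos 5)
Your proof is correct, but it closes the degree count from the opposite side compared to the paper. You bound $[\tilde{\mathcal{Y}}_{n,s}:L]$ \emph{from above} by $p^b$ using the explicit Artin--Schreier-type relation $t^{p^b}-t+h_{c_0}=0$ for $t=c_0x/\rho$ (noting $\tilde{\mathcal{Y}}_{n,s}=L(t)$), and then invoke $L\subseteq\mathrm{Fix}(\tilde{E}_{p^b})$ together with $[\tilde{\mathcal{Y}}_{n,s}:\mathrm{Fix}(\tilde{E}_{p^b})]=p^b$ for the matching lower bound; no irreducibility statement is ever needed. The paper instead works over the base $\mathbb{F}_{q^{2n}}(\rho,z)$ and shows that $h_{c_0}$ has degree exactly $q/p^b$ there, by proving that the separated polynomial $\ell(T)=T^{p^{(f-1)b}}+\cdots+T^{p^b}+T-c_0(\rho^{q+1}+1)/\rho^{q+1}$ is irreducible via the ramification criterion of \cite[Proposition 3.7.10]{stichtenoth} (the right-hand side has pole order coprime to $p$). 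Your route is more elementary and self-contained; the paper's route has the side benefit of directly producing the minimal polynomial of $h_{c_0}$ over $\mathbb{F}_{q^{2n}}(\rho,z)$, which is exactly the second defining equation \eqref{eqXtilda} of $\tilde{\mathcal{X}}_{a,b,n,s}$ and is reused afterwards (e.g.\ in the genus computation of Lemma \ref{genXtilda}). Two cosmetic remarks: the polynomial $T^{p^b}-T+h_{c_0}$ is Artin--Schreier-type rather than additive because of its constant term, and the faithfulness of the $\tilde{E}_{p^b}$-action (clear from the translation description) is what licenses $[\tilde{\mathcal{Y}}_{n,s}:\mathrm{Fix}(\tilde{E}_{p^b})]=p^b$ via Artin's theorem; it is worth saying so explicitly.
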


\begin{proof}
By direct computation, $\mathbb{F}_{q^{2n}}(\rho,z,h_d)$ is fixed by $\tilde{E}_{p^b}$.
Since $[\tilde{\mathcal{Y}}_{n,s}:\mathbb{F}_{q^{2n}}(\rho,z)]=q$, it is enough to prove that $[\mathbb{F}_{q^{2n}}(\rho,z,h_d):\mathbb{F}_{q^{2n}}(\rho,z)]=q/p^b=p^{(f-1)b}$.
To this aim, we show that
$$\ell(T)=T^{p^{(f-1)b}}+T^{p^{(f-2)b}}+\ldots+T^{p^b}+T - c_0(\rho^{q+1}+1)/\rho^{q+1}\in \mathbb{F}_{q^{2n}}(\rho,z)[T]  $$
is the minimal polynomial of $h_{c_0}$.
By Equation \eqref{eq:rho}, $h_{c_0}$ is a root of $\ell(T)$.
Since $\mathbb{F}_{q^{2n}}(\rho,z)$ is defined by 
$z^{m/s}=(\rho^{q^2-1}-1)/\rho^{q-1}$, the function $c_0(\rho^{q+1}+1)/\rho^{q+1}$ has multiplicity coprime with $p$ at its pole in $\mathbb{F}_{q^{2n}}(\rho,z)$,
so that $\ell(T)$ is irreducible by \cite[Proposition 3.7.10]{stichtenoth}.
\end{proof}
At this point we can define 
the curve $\tilde{\mathcal{X}}_{a,b,n,s}$ as
$$\tilde{\mathcal{X}}_{a,b,n,s}:=\mathbb{F}_{q^{2n}}\bigg( \rho,c_0\frac{x}{\rho}-\bigg(c_0\frac{x}{\rho} \bigg)^{p^b},z\bigg) \subseteq \tilde{\mathcal{Y}}_{n,s},$$
that is the function field $\tilde{\mathcal{X}}_{a,b,n,s}:=\mathbb{F}_{q^{2n}}(u,v,z)$ defined by
\begin{equation} \label{eqXtilda}
\begin{cases} z^{m/s}=\frac{u^{q^2-1}-1}{u^{q-1}}, \\ \sum_{i=0}^{f-1} v^{p^{ib}}=c_0\frac{u^{q+1}+1}{u^{q+1}},\end{cases}
\end{equation}
where $u=\rho$ and $v=h_{c_0}=c_0x/\rho-(c_0 x/\rho)^{p^b}$.
As a subfield of $\tilde{\mathcal{Y}}_{n,s}$, $\tilde{\mathcal{X}}_{a,b,n,s}$ is $\mathbb{F}_{q^{2n}}$-maximal.
Next lemma shows that $\tilde{\mathcal{Y}}_{n,s}$ and $\mathcal{Y}_{n,s}$ have the same genus.

\begin{lemma} \label{genXtilda}
 The genus of $\tilde{\mathcal{X}}_{a,b,n,s}$ is
 $$g(\tilde{\mathcal{X}}_{a,b,n,s})=\frac{q^{n+2}-p^bq^n-sq^3+q^2+(s-1)p^b}{2sp^b}=g({\mathcal{X}}_{a,b,n,s}).$$
\end{lemma}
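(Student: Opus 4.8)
The plan is to realize $\tilde{\mathcal{X}}_{a,b,n,s}$ as the fixed field $\mathrm{Fix}(\tilde{E}_{p^b})$ and to read off its genus from the Hurwitz genus formula applied to the Galois extension $\tilde{\mathcal{Y}}_{n,s}/\tilde{\mathcal{X}}_{a,b,n,s}$, whose Galois group is $\tilde{E}_{p^b}$. By construction this group is an elementary abelian $p$-group of order $p^b$, being a subgroup of $\tilde{E}_q$, so the whole extension is wildly ramified and the only issue is to locate and quantify the ramification. The main input about the genus of the ambient field is Proposition \ref{genusYtilde}, and the main input about ramification is Lemma \ref{contr1TILDE}.

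First I would pin down the ramification locus. Since $\tilde{E}_{p^b}\le\tilde{E}_q$ and $\tilde{E}_q$ fixes the place $\tilde{P}_\infty^1$, and since $\tilde{\mathcal{Y}}_{n,s}$ is $\mathbb{F}_{q^{2n}}$-maximal, Corollary \ref{1actionp2} applies to the $p$-group $\tilde{E}_{p^b}$: it fixes exactly the place $\tilde{P}_\infty^1$ and acts semiregularly on all remaining places. Consequently $\tilde{\mathcal{Y}}_{n,s}/\tilde{\mathcal{X}}_{a,b,n,s}$ is ramified only at $\tilde{P}_\infty^1$, and there it is totally ramified. Next I would compute the different exponent at $\tilde{P}_\infty^1$ by means of the reorganization of Hilbert's different formula described in Section \ref{sec:prel}, namely by summing the contributions $i(\alpha)$ over the nontrivial elements $\alpha$ of $\tilde{E}_{p^b}$. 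Because every such $\alpha$ fixes only $\tilde{P}_\infty^1$, one obtains
\[
d_{\tilde{P}_\infty^1}=\sum_{\alpha\in\tilde{E}_{p^b}\setminus\{\mathrm{id}\}} i(\alpha)=(p^b-1)\left(\frac{q^n+1}{s}+1\right),
\]
where the value $i(\alpha)=(q^n+1)/s+1$ is taken from Lemma \ref{contr1TILDE}. Substituting this and $g(\tilde{\mathcal{Y}}_{n,s})$ from Proposition \ref{genusYtilde} into $2g(\tilde{\mathcal{Y}}_{n,s})-2=p^b\bigl(2g(\tilde{\mathcal{X}}_{a,b,n,s})-2\bigr)+d_{\tilde{P}_\infty^1}$ and solving for $g(\tilde{\mathcal{X}}_{a,b,n,s})$ gives, after a routine simplification, the displayed formula; the equality $g(\tilde{\mathcal{X}}_{a,b,n,s})=g(\mathcal{X}_{a,b,n,s})$ then follows by comparing with the genus of $\mathcal{X}_{a,b,n,s}$ recorded in \cite{TTT}.

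The step that needs the most care, and which I regard as the main obstacle, is the justification that $i(\alpha)=(q^n+1)/s+1$ holds for \emph{every} nontrivial $\alpha\in\tilde{E}_{p^b}$. Lemma \ref{contr1TILDE} is stated for the group $E_q$ of Section \ref{sec:Y}, whereas $\tilde{E}_{p^b}$ is a subgroup of the group $\tilde{E}_q$ of Section \ref{sec:X}. The bridge is that $i(\alpha)=v_{\tilde{P}_\infty^1}(\alpha(t)-t)$ is intrinsic to $\alpha$ and independent of the ambient group, and that $\tilde{E}_q$ and $E_q$ are Sylow $p$-subgroups of $S_\ell\cong\SL(2,q)$, hence conjugate; conjugation by an element of $S_\ell$ preserves the contribution $i(\alpha)$ while moving the fixed place. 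Thus the common value established within $E_q$ in Lemma \ref{contr1TILDE} transfers to every nontrivial element of $\tilde{E}_q$, and in particular to those of $\tilde{E}_{p^b}$. Once this is secured, the remaining arithmetic is a direct verification.
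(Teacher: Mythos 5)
Your argument is correct, and the arithmetic checks out: with $d_{\tilde{P}_\infty^1}=(p^b-1)\bigl((q^n+1)/s+1\bigr)$ and total ramification concentrated at $\tilde{P}_\infty^1$ (which is indeed forced by Corollary \ref{1actionp2} together with the $p$-rank zero argument, since every nontrivial element of $\tilde{E}_{p^b}$ fixes exactly one place and the group fixes $\tilde{P}_\infty^1$), the Hurwitz formula for $\tilde{\mathcal{Y}}_{n,s}/\tilde{\mathcal{X}}_{a,b,n,s}$ yields exactly the stated genus. However, your route is genuinely different from the paper's. The paper goes \emph{upward}: it starts from $\tilde{\mathcal{X}}_{a,a,n,s}=\mathbb{F}_{q^{2n}}(u,z)$, whose genus $(m/s-1)(q^2-1)/2$ is known from Kummer theory, and builds $\tilde{\mathcal{X}}_{a,b,n,s}=\mathbb{F}_{q^{2n}}(u,v,z)$ as a degree-$q/p^b$ extension defined by the additive polynomial $\sum_{i=0}^{f-1}v^{p^{ib}}=c_0(u^{q+1}+1)/u^{q+1}$, reading off the genus from \cite[Proposition 3.7.10]{stichtenoth} on extensions by linearized polynomials. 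You instead go \emph{downward} from $\tilde{\mathcal{Y}}_{n,s}$ as the quotient by $\tilde{E}_{p^b}$, reusing Proposition \ref{genusYtilde} and Lemma \ref{contr1TILDE}. Your approach has the advantage of recycling ramification data already established in Section \ref{sec:Y} and makes the quotient structure transparent; the paper's approach is independent of Lemma \ref{contr1TILDE} and works directly from the defining equations, at the cost of invoking an external result on separated-polynomial extensions. One small simplification you could note: the transfer of Lemma \ref{contr1TILDE} to $\tilde{E}_{p^b}$ needs no conjugation argument at all, since the group $\tilde{E}_q$ of Equation \eqref{eq:sylowX} is literally the group $E_q$ of Equation \eqref{eq:Eq} (substitute $a=c+1$), acting on $\tilde{\mathcal{Y}}_{n,s}$ with the same trivial action on $z$; the intrinsic nature of $i(\alpha)$ then does the rest, exactly as you say.
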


\begin{proof}
We know that the function field $\mathbb{F}_{q^{2n}}(u,z)$ is defined by $z^{m/s}=(u^{q^2-1}-1)/u^{q-1}$, has genus $(m/s-1)(q^2-1)/2$, and the zero of $u$ is totally ramified in $\mathbb{F}_{q^{2n}}(u,z)/\mathbb{F}_{q^{2n}}(u)$.
Then, using \cite[Proposition 3.7.10]{stichtenoth}, $\Fqn(u, v,z)/\Fqn(u, z)$ is a extension of genus
    \begin{align*}
        g(\tilde{\mathcal{X}}_{a,b,n,s}) &= \frac{q}{p^b} \cdot \frac{(m/s-1)(q^2-1)}{2} + \frac{q-p^b}{2p^b}(-2 + m(q + 1)/s + 1) \\
        &= \frac{1}{2sp^b} \left( q^{n+2} - p^bq^{n} - sq^3 + q^2 + (s-1)p^b \right).
    \end{align*}
\end{proof}

In the rest of this section, we determine some Weierstrass semigroups at places of $\tilde{\mathcal{X}}_{a,b,n,s}$ (Section \ref{subsec:weierstrass}), and compute the automorphism group of $\tilde{\mathcal{X}}_{a,b,n,s}$ (Section \ref{subsec:autX}), which will allow us to distinguish birationally $\tilde{\mathcal{X}}_{a,b,n,s}$ from $\mathcal{X}_{a,b,n,s}$.


\subsection{Distinguishing places of $\tilde{\mathcal{X}}_{a,b,n,s}$}\label{subsec:weierstrass}

We start by computing the Weierstrass semigroup at several rational places of  $\tilde{\mathcal{X}}_{a,a,n,s}$, i.e. $b=a$. In this case,
$$ \tilde{\mathcal{X}}_{a,a,n,s}=\mathbb{F}_{q^{2n}}(\rho,z),\qquad z^{m/s}=(\rho^{q^2-1}-1)/\rho^{q-1},\qquad g(\tilde{\mathcal{X}}_{a,a,n,s})=\frac{(q^2-1)(m/s-1)}{2}. $$



Let $\tilde{Q}_\infty^1$ be the unique place of $\tilde{\mathcal{X}}_{a,a,n,s}$ lying below the place $\tilde{P}_\infty^1$ of $\tilde{\mathcal{Y}}_{n,s}$; then $\tilde{Q}_\infty^1$ is the unique zero of $\rho$.
Moreover, let $\tilde{Q}_\infty^2$ be the place of $\tilde{\mathcal{X}}_{a,a,n,s}$ lying below the $\tilde{E}_q$-orbit $\{\tilde{P}_\infty^2,\dots,\tilde{P}_\infty^{q+1}\}$; then $\tilde{Q}_\infty^2$ is the unique pole of $\rho$. More precisely,
$$(\rho)_{\tilde{\mathcal{X}}_{a,a,n,s}}=\frac{m}{s} \left( \tilde{Q}_\infty^1-\tilde{Q}_\infty^2\right).$$
Finally, the $q^3-q$ places of $\tilde{\mathcal{Y}}_{n,s}$ contained in $O$ restrict to the $q^2-1$ zeroes of $\rho^{q^2-1}-1$ in $\tilde{\mathcal{\mathcal{X}}}_{a,a,n,s}$, which have multiplicity $m/s$ and will be indicated with $\tilde{Q}_\alpha$ for $\alpha \in \mathbb{F}_{q^2}^*$, so that
$$(\rho-\alpha)_{\tilde{\mathcal{X}}_{a,a,n,s}}=\frac{m}{s} \left( \tilde{Q}_\alpha-\tilde{Q}_\infty^2\right) \quad \text{ and } \quad ((\rho-\alpha)/\rho)_{\tilde{\mathcal{X}}_{a,a,n,s}}=\frac{m}{s} \left( \tilde{Q}_\alpha-\tilde{Q}_\infty^1\right).$$
Then
$$(z)_{\tilde{\mathcal{X}}_{a,a,n,s}}=\sum_{\alpha \in \mathbb{F}_{q^2}^*} \tilde{Q}_\alpha-(q-1)\tilde{Q}_\infty^1-(q^2-q) \tilde{Q}_\infty^2,$$
$$(\rho^k z^\ell)_{\tilde{\mathcal{X}}_{a,a,n,s}}=\ell \sum_{\alpha \in \mathbb{F}_{q^2}^*} \tilde{Q}_\alpha+\left(k\frac{m}{s}-\ell(q-1)\right) \tilde{Q}_\infty^1+\left(-k\frac{m}{s}-\ell(q^2-q)\right) \tilde{Q}_\infty^2.$$
This implies the following inclusion of semigroups:
$$H_1:=\left\langle \frac{m}{s},\;a_\ell \;\colon\; \ell=1,\dots,\frac{m}{s}-1 \right\rangle \subseteq H(\tilde{Q}_\infty^1),$$ 
$$
H_2:=\left\langle \frac{m}{s},\;b_\ell \;\colon\; \ell=1,\dots,\frac{m}{s}-1 \right\rangle \subseteq H(\tilde{Q}_\infty^2),
$$
where
$$
a_\ell=\ell (q-1)+ \left\lceil \frac{\ell (q^2-q)}{m/s}\right\rceil \frac{m}{s},\qquad b_\ell=\ell (q^2-q)+\left\lceil \frac{\ell (q-1)}{m/s}\right\rceil \frac{m}{s}.
$$
Define
\begin{equation}\label{eq:apery}
A_1:=\left\{ 0,\;a_\ell \;\colon\; \ell=1,\dots,\frac{m}{s}-1 \right\},\qquad
A_2:=\left\{ 0,\;b_\ell \;\colon\; \ell=1,\dots,\frac{m}{s}-1 \right\}.
\end{equation}
Since $\gcd(q-1,m/s)=\gcd(q^2-q,m/s)=1$, $A_i$ ($i=1,2$) is a complete system modulo $m/s$, and its elements are minimal in $H_i$ with this property; that is, $A_i$ the Ap\'ery set ${\rm Ap}(H_i,m/s)$ of $m/s$ in $H_i$.
Then, by Selmer's formula, the number of gaps in $H_1$ and $H_2$ is
$$
g(H_1)=\frac{1}{m/s}\left(\sum_{\ell=1}^{m/s-1}a_\ell\right)-\frac{m/s-1}{2}=\frac{(q^2-1)(m/s-1)}{2}=g(\tilde{\mathcal{X}}_{a,a,n,s}),
$$
$$
g(H_2)=\frac{1}{m/s}\left(\sum_{\ell=1}^{m/s-1}b_\ell\right)-\frac{m/s-1}{2}=\frac{(q^2-1)(m/s-1)}{2}=g(\tilde{\mathcal{X}}_{a,a,n,s}).
$$
This means that $H_1=H(\tilde{Q}_\infty^1)$ and $H_2=H(\tilde{Q}_\infty^2)$.
Now, considering functions of the form 
$$\left(\frac{\rho-\alpha}{\rho}\right)^{k_1} \frac{1}{\rho^{k_2}} z^\ell,$$
one similarly sees that
$$
H_{(\alpha)}:=\left\langle \frac{m}{s},\;-\ell+\left(\left\lceil \frac{\ell (q-1)}{m/s}\right\rceil+\left\lceil \frac{\ell (q^2-q)}{m/s}\right\rceil \right) \frac{m}{s} \;\colon\; \ell=1,\dots,\frac{m}{s}-1 \right\rangle \subseteq H(\tilde{Q}_\alpha).
$$
By the same argument as above, the Ap\'ery set of $m/s$ in $H_{(\alpha)}$ is given by its generators above, after replacing $m/s$ with zero; then the Selmer's formula yields $H_{(\alpha)}=H(\tilde{Q}_{\alpha})$. 
We have shown the following result.

\begin{theorem}\label{thm:semigroupsXaa}
The semigroups at the places $\tilde{Q}_\infty^{1}$, $\tilde{Q}_\infty^{2}$ and $\tilde{Q}_\alpha$ of $\tilde{\mathcal{X}}_{a,a,n,s}$ are as follows.
$$H(\tilde{Q}_\infty^1)=\left\langle \frac{m}{s},\;\ell (q-1)+ \left\lceil \frac{\ell (q^2-q)}{m/s}\right\rceil \frac{m}{s} \;\colon\; \ell=1,\dots,\frac{m}{s}-1 \right\rangle,$$
$$
H(\tilde{Q}_\infty^2)=\left\langle \frac{m}{s},\;\ell (q^2-q)+\left\lceil \frac{\ell (q-1)}{m/s}\right\rceil \frac{m}{s} \;\colon\; \ell=1,\dots,\frac{m}{s}-1 \right\rangle,
$$
$$
H(\tilde{Q}_\alpha)=\left\langle \frac{m}{s},\;-\ell+\left(\left\lceil \frac{\ell (q-1)}{m/s}\right\rceil+\left\lceil \frac{\ell (q^2-q)}{m/s}\right\rceil \right) \frac{m}{s} \;\colon\; \ell=1,\dots,\frac{m}{s}-1 \right\rangle.
$$
\end{theorem}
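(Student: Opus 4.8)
The plan is to produce, for each of the three places, an explicit numerical subsemigroup of its Weierstrass semigroup that already has exactly $g(\tilde{\mathcal{X}}_{a,a,n,s})$ gaps, and then to invoke the Weierstrass gap theorem to upgrade the inclusion to an equality. First I would record the divisors of the basic functions on $\tilde{\mathcal{X}}_{a,a,n,s}=\mathbb{F}_{q^{2n}}(\rho,z)$. Since $\gcd(m/s,q^2-q)=1$, the extension $\tilde{\mathcal{X}}_{a,a,n,s}/\mathbb{F}_{q^{2n}}(\rho)$ is a Kummer extension of degree $m/s$ in which the zero of $\rho$ (namely $\tilde{Q}_\infty^1$), its pole $\tilde{Q}_\infty^2$, and each zero $\tilde{Q}_\alpha$ of $\rho^{q^2-1}-1$ are totally ramified; this is what yields the divisors of $\rho$, of $(\rho-\alpha)/\rho$ and of $z$ displayed above. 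Forming the products $\rho^k z^\ell$ (respectively $((\rho-\alpha)/\rho)^{k_1}\rho^{-k_2}z^\ell$) and choosing the exponents so that the only pole lies at the target place, I read off the pole orders $a_\ell$, $b_\ell$ and the generators of $H_{(\alpha)}$, obtaining the inclusions $H_1\subseteq H(\tilde{Q}_\infty^1)$, $H_2\subseteq H(\tilde{Q}_\infty^2)$ and $H_{(\alpha)}\subseteq H(\tilde{Q}_\alpha)$.

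The heart of the argument, and the step I expect to be the main obstacle, is to show that the listed generators actually constitute the Ap\'ery set $\mathrm{Ap}(H_\bullet,m/s)$, i.e.\ that each $a_\ell$ (and likewise each $b_\ell$ and each generator of $H_{(\alpha)}$) is the least element of its own semigroup in its residue class modulo $m/s$. The coprimalities $\gcd(q-1,m/s)=\gcd(q^2-q,m/s)=1$ guarantee that the exponents $\ell(q-1)$ and $\ell(q^2-q)$, for $\ell=0,\dots,m/s-1$, run through all residues modulo $m/s$, so the candidate Ap\'ery set is a complete residue system and only minimality remains. I would establish this by decomposing an arbitrary function along the basis $1,z,\dots,z^{m/s-1}$ of $\tilde{\mathcal{X}}_{a,a,n,s}$ over $\mathbb{F}_{q^{2n}}(\rho)$: at a totally ramified place the valuations of the summands $f_\ell(\rho)z^\ell$ lie in pairwise distinct classes modulo $m/s$, so no cancellation can occur and the pole order of the whole function equals the minimum of the individual pole orders. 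Consequently the requirement that the only pole sit at the target place decouples across $\ell$, forcing each admissible $f_\ell$ to be a scalar multiple of a power of $\rho$ whose exponent is the smallest one killing the pole of $z^\ell$ at the remaining places; minimizing $v_{\tilde{Q}_\infty^1}(f_\ell z^\ell)$ then returns exactly $a_\ell$, and the same bookkeeping produces $b_\ell$ and the generators of $H_{(\alpha)}$.

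Finally, with the Ap\'ery sets in hand, Selmer's formula
\[
g(H_\bullet)=\frac{1}{m/s}\sum_{w\in\mathrm{Ap}(H_\bullet,m/s)}w-\frac{m/s-1}{2}
\]
reduces the gap count to the explicit sums $\sum_\ell a_\ell$ and $\sum_\ell b_\ell$, which I would evaluate to $(q^2-1)(m/s-1)/2=g(\tilde{\mathcal{X}}_{a,a,n,s})$. Because $H(\tilde{Q})$ has exactly $g(\tilde{\mathcal{X}}_{a,a,n,s})$ gaps by the Weierstrass gap theorem, while $H_\bullet\subseteq H(\tilde{Q})$ already accounts for that many gaps, the inclusion of gap sets must be an equality and hence $H_\bullet=H(\tilde{Q})$, giving the three stated descriptions of $H(\tilde{Q}_\infty^1)$, $H(\tilde{Q}_\infty^2)$ and $H(\tilde{Q}_\alpha)$.
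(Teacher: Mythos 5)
Your proposal follows essentially the same route as the paper: the same divisor computations on $\tilde{\mathcal{X}}_{a,a,n,s}=\mathbb{F}_{q^{2n}}(\rho,z)$ produce the subsemigroups $H_1$, $H_2$, $H_{(\alpha)}$ via the functions $\rho^k z^\ell$ and $((\rho-\alpha)/\rho)^{k_1}\rho^{-k_2}z^\ell$, their generators together with $0$ are identified as the Ap\'ery set of $m/s$, and Selmer's formula plus the Weierstrass gap theorem upgrades the inclusions to equalities. The one point where you diverge is the justification of the Ap\'ery-set claim. The paper only needs (and only asserts) that each $a_\ell$ is minimal in its residue class \emph{within the abstract subsemigroup} $H_i$ it helps generate --- an elementary consequence of $\lceil x\rceil+\lceil y\rceil\ge\lceil x+y\rceil$ --- because Selmer's formula is applied to $H_i$, not to $H(\tilde{Q})$. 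You instead propose to prove minimality within the full Weierstrass semigroup by decomposing an arbitrary function along the basis $1,z,\dots,z^{m/s-1}$; that is a strictly stronger statement (it would render the Selmer step redundant, since knowing $\mathrm{Ap}(H(\tilde{Q}),m/s)$ already determines $H(\tilde{Q})$), and your justification of the ``decoupling'' is incomplete as written. The distinct-residue argument controls the valuations of the summands $f_\ell(\rho)z^\ell$ only at the totally ramified places; but to bound $v_0(f_\ell)$ by the degree of its divisor on the $\rho$-line you also need $f_\ell$ to be pole-free at the unramified places, and for that you must average over the $C_{m/s}$-action (each $f_\ell z^\ell$ is a linear combination of the conjugates $\sigma^k(f)$, all of which have poles only at the $C_{m/s}$-fixed target place). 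Since the weaker, purely combinatorial minimality in $H_\bullet$ suffices for the Selmer/genus-count argument you also run, your proof goes through once that small repair (or the easier combinatorial substitute) is made.
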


The description of the semigroups distinguishes the corresponding places in most cases.

\begin{corollary}\label{cor:semigroupsXaa}
If $m/s$ divides $q^2-q+1$, then $H(\tilde{Q}_\infty^2)=H(\tilde{Q}_\alpha)$.
\end{corollary}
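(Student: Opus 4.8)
The plan is to compare directly the two explicit presentations furnished by Theorem \ref{thm:semigroupsXaa} and to show that, under the hypothesis $\frac{m}{s}\mid q^2-q+1$, the two families of generators coincide term by term. Write $M:=m/s$ for brevity. Both semigroups have the shape $\langle M,\; g_\ell : \ell=1,\dots,M-1\rangle$, with respective $\ell$-th generators
$$b_\ell=\ell(q^2-q)+\left\lceil \frac{\ell(q-1)}{M}\right\rceil M,\qquad c_\ell=-\ell+\left(\left\lceil \frac{\ell(q-1)}{M}\right\rceil+\left\lceil \frac{\ell(q^2-q)}{M}\right\rceil\right)M$$
for $H(\tilde{Q}_\infty^2)$ and $H(\tilde{Q}_\alpha)$. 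Since $M$ is a common generator of both, it suffices to prove $b_\ell=c_\ell$ for every $\ell\in\{1,\dots,M-1\}$; this forces the two generating sets, and hence the two semigroups, to be identical.

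First I would subtract the two expressions. The common term $\lceil \ell(q-1)/M\rceil M$ cancels, leaving
$$c_\ell-b_\ell=-\ell(q^2-q+1)+\left\lceil \frac{\ell(q^2-q)}{M}\right\rceil M.$$
So the entire claim reduces to evaluating the single ceiling $\lceil \ell(q^2-q)/M\rceil$, and this is the only point where the divisibility hypothesis is needed.

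To evaluate it, I would use $M\mid q^2-q+1$ to write $q^2-q+1=tM$, so that $q^2-q=tM-1$ and therefore $\ell(q^2-q)/M=\ell t-\ell/M$. For $1\le \ell\le M-1$ one has $0<\ell/M<1$, so $\ell t-\ell/M$ lies strictly between $\ell t-1$ and $\ell t$, giving $\lceil \ell(q^2-q)/M\rceil=\ell t$. Substituting back yields $\lceil \ell(q^2-q)/M\rceil M=\ell t M=\ell(q^2-q+1)$, whence $c_\ell-b_\ell=0$, as required.

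I expect no genuine obstacle: the argument is a short arithmetic computation once the substitution $q^2-q=tM-1$ is made. The one place that needs care is the ceiling evaluation, where the strict bounds $1\le\ell\le M-1$ must be invoked to pin down the fractional part; note also that $\gcd(q^2-q,M)=1$, already used in the construction of the Apéry sets in Theorem \ref{thm:semigroupsXaa}, guarantees that $\ell(q^2-q)/M$ is never an integer in this range, so the ceiling is unambiguous.
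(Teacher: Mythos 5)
Your proposal is correct and is essentially the paper's own argument: the paper likewise reduces the claim to evaluating $\lceil \ell(q^2-q)/(m/s)\rceil$ by splitting off the integer $\ell(q^2-q+1)/(m/s)$ and noting $\lceil -\ell/(m/s)\rceil=0$ for $1\le\ell\le m/s-1$, which is the same computation as your substitution $q^2-q=t\,(m/s)-1$. No issues.
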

\begin{proof}
Let $\ell \in \{1,\dots,m/s-1\}$. Then 
\begin{eqnarray*}
-\ell+\left(\left\lceil \frac{\ell (q-1)}{m/s}\right\rceil+\left\lceil \frac{\ell (q^2-q)}{m/s}\right\rceil \right) \frac{m}{s} & = & -\ell+\left(\left\lceil \frac{\ell (q-1)}{m/s}\right\rceil+\frac{\ell (q^2-q+1)}{m/s}+ \left\lceil \frac{-\ell}{m/s}\right\rceil\right) \frac{m}{s}\\
& = & \ell (q^2-q)+\left\lceil \frac{\ell (q-1)}{m/s}\right\rceil \frac{m}{s}.
\end{eqnarray*}
\end{proof}

\begin{theorem}\label{thm:semigroupsXaa_compare}
If $m/s>1$, then $H(\tilde{Q}_\infty^1) \neq H(\tilde{Q}_\infty^2)$ and $H(\tilde{Q}_\infty^1) \neq H(\tilde{Q}_\alpha)$. If $m/s$ does not divide $q^2-q+1$, then $H(\tilde{Q}_\infty^2) \neq H(\tilde{Q}_\alpha)$. 
\end{theorem}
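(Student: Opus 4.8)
The plan is to use the standard fact that two numerical semigroups, each containing $M:=m/s$, coincide if and only if they have the same Apéry set with respect to $M$. By Theorem \ref{thm:semigroupsXaa} the three Apéry sets are known explicitly: each is $\{0\}$ together with the listed generators, and—as already noted before Theorem \ref{thm:semigroupsXaa}—forms a complete system of residues modulo $M$, the element in a given nonzero class being the unique listed generator lying in that class. Hence to separate two of these semigroups it suffices to exhibit one residue class in which their minimal elements differ. Throughout I would use that $\gcd(M,q-1)=\gcd(M,q)=1$ and $M>1$, so that none of $q-1,q,q^2-q$ is $\equiv 0\pmod M$. Writing $\overline{x}$ for the residue of $x$ in $\{0,\dots,M-1\}$ and using $\lceil a/M\rceil M=a+\overline{-a}$, the generators take the normalized form
\begin{equation*}
a_\ell=\ell(q^2-1)+\overline{-\ell(q^2-q)},\qquad b_\ell=\ell(q^2-1)+\overline{-\ell(q-1)},\qquad c_\ell=\ell(q^2-2)+\overline{-\ell(q-1)}+\overline{-\ell(q^2-q)},
\end{equation*}
where $a_\ell,b_\ell,c_\ell$ lie respectively in the classes $\ell(q-1),\ \ell(q^2-q),\ -\ell$ modulo $M$.

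For $H(\tilde{Q}_\infty^1)\ne H(\tilde{Q}_\infty^2)$ I would compare Frobenius numbers. Both $a_\ell$ and $b_\ell$ are non-decreasing in $\ell$, so the largest element of each Apéry set is attained at $\ell=M-1$, and since $-(M-1)\equiv 1\pmod M$ we get $a_{M-1}=(M-1)(q^2-1)+\overline{q^2-q}$ and $b_{M-1}=(M-1)(q^2-1)+\overline{q-1}$. Their difference is $\overline{q^2-q}-\overline{q-1}$, which is nonzero because $q^2-q\equiv q-1\pmod M$ would force $M\mid (q-1)^2$, hence $M\mid 1$. Thus the two semigroups have different Frobenius numbers and cannot be equal.

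For $H(\tilde{Q}_\infty^1)\ne H(\tilde{Q}_\alpha)$ I would instead compare the smallest nonzero elements of the Apéry sets. By monotonicity the smallest such element of $H(\tilde{Q}_\infty^1)$ is $a_1$. From the normalized forms one reads off $c_\ell=a_\ell+\overline{-\ell(q-1)}-\ell$; combining $a_\ell\ge a_1+(\ell-1)(q-1)$ with $\overline{-\ell(q-1)}\ge 1$ gives $c_\ell\ge a_1+(\ell-1)(q-2)$ for every $\ell\ge 1$. For $q\ge 3$ this already exceeds $a_1$ once $\ell\ge 2$, while for $\ell=1$ one has $c_1=a_1+\overline{-(q-1)}-1>a_1$ because $\overline{-(q-1)}=1$ would force $M\mid q$; the residual case $q=2$ is checked directly. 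Hence $a_1$ is strictly smaller than every nonzero element of $\mathrm{Ap}(H(\tilde{Q}_\alpha),M)$, so $a_1\in H(\tilde{Q}_\infty^1)\setminus H(\tilde{Q}_\alpha)$.

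Finally, $H(\tilde{Q}_\infty^2)\ne H(\tilde{Q}_\alpha)$ when $m/s\nmid q^2-q+1$ is the delicate point and the natural complement to Corollary \ref{cor:semigroupsXaa}. I would look at the residue class $q^2-q$: the minimal element of $H(\tilde{Q}_\infty^2)$ there is $b_1$, while that of $H(\tilde{Q}_\alpha)$ is $c_k$ with $k=\overline{-(q^2-q)}$. Here $k=1$ holds exactly when $q^2-q\equiv -1\pmod M$, i.e.\ when $M\mid q^2-q+1$, which is precisely the equality case of the Corollary; under the present hypothesis $k\ge 2$, and it remains to prove $b_1\ne c_k$. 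This is where the main difficulty lies: since $c_\ell$ is \emph{not} monotone in $\ell$ (its two fractional contributions oscillate), there is no closed form for $\mathrm{Ap}(H(\tilde{Q}_\alpha),M)$ as clean as the one for the other two semigroups, so the comparison cannot be reduced to a single Frobenius computation. I expect to establish $b_1<c_k$ by bounding $c_k\ge k(q^2-2)\ge 2(q^2-2)$ against $b_1\le q^2-2+M$, which settles all $M<q^2-2$ (in particular the whole range $m/s<q^2-q$); the complementary range $m/s>q^2-q$ is handled by noting that there $k=M-(q^2-q)$ is itself large, forcing $c_k$ to grow linearly in $M$ and so to dominate $b_1$. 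Making this dichotomy uniform, sharpening the crude estimate $c_k\ge k(q^2-2)$ for the small values of $M$ where it is too weak, and thereby excluding any accidental coincidence $b_1=c_k$, is the step I expect to absorb most of the technical work.
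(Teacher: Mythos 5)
Your treatment of the first two inequalities is correct and takes a genuinely different, and cleaner, route than the paper. Where the paper assumes equality of the semigroups, matches Apéry elements pairwise, and extracts congruences modulo $q^2-1$ (resp. $q^2-2$) that it then defeats by an iterative growth argument, you simply compare extremal elements of the Apéry sets: the maxima $a_{M-1}$ and $b_{M-1}$ for $H(\tilde{Q}_\infty^1)$ versus $H(\tilde{Q}_\infty^2)$ (equivalently, the Frobenius numbers), and the minimum nonzero element $a_1$ against all of $\mathrm{Ap}(H(\tilde{Q}_\alpha),m/s)$ for the second comparison. The monotonicity claims you need do hold: writing $M=m/s$, one has $a_{\ell+1}-a_\ell\ge q^2-1-\overline{q^2-q}\ge q-1>0$ and $b_{\ell+1}-b_\ell\ge q^2-q>0$, since a residue never exceeds the integer it represents; and the coprimality facts $\gcd(M,q-1)=\gcd(M,q)=1$ close the degenerate cases exactly as you indicate. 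The only loose end there is the case $q=2$, $\ell\ge2$ of the second comparison, which you defer to a "direct check"; it is a one-line verification ($c_\ell-a_1=\ell+\overline{-2\ell}-1-\overline{-2}+\overline{-2\ell}\cdot 0\ge\cdots$, or more simply $c_\ell=\ell+M+\overline{-2\ell}\ge M+2>M+1=a_1$), so this is cosmetic.

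The third inequality, however, is a genuine gap: you identify the right residue class ($q^2-q$ bmod $M$) and the right comparison ($b_1$ against $c_k$ with $k=\overline{-(q^2-q)}\ge2$), but you do not prove $b_1<c_k$ — you state bounds that, as you yourself note, only cover $M<q^2-2$ and "large" $M$, and you explicitly defer "making the dichotomy uniform" and "excluding any accidental coincidence $b_1=c_k$". Since this is precisely the part of the theorem that carries the hypothesis $m/s\nmid q^2-q+1$, the proposal does not establish the statement as written. For what it is worth, the strategy does close: $b_1\le q^2+M-2$ and $c_k\ge k(q^2-2)+2$, so $k\ge2$ already settles $M<q^2$; for $q^2-q<M<q^2$ the parity of $M$ (recall $m$ is odd and $q^2-q$ is even) forces $k=M-(q^2-q)$ to be odd, hence $k\ge3$, which is enough; and for $M\ge q^2$ one has $k=M-q^2+q\ge q$, whence $c_k-b_1\ge M(q^2-3)-(q^2-q)(q^2-2)-q^2+4>0$. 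But none of this is in your write-up, and the paper's own Step 3 (reduction to $m/s\le q^2-q$, monotonicity of $f(\ell)$, and the resulting identity $\ell(q^2-q+1)=\lceil\ell(q^2-q)/(m/s)\rceil\,(m/s)$ for all $\ell$) is a complete argument that yours currently is not.
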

\begin{proof}
{\bf Step 1: $H(\tilde{Q}_\infty^1) \neq H(\tilde{Q}_\infty^2)$.} 
Suppose that $H(\tilde{Q}_\infty^1) = H(\tilde{Q}_\infty^2)$. Using the Ap\'ery sets \eqref{eq:apery}, for any $\ell_1 \in \{1,\dots,m/s-1\}$ there exists a unique $\ell_2 \in \{1,\dots,m/s-1\}$ such that 
\begin{equation}\label{eq:semgr1}
\ell_1 (q-1)+\left\lceil \frac{\ell_1 (q^2-q)}{m/s}\right\rceil \frac{m}{s} =\ell_2 (q^2-q)+\left\lceil \frac{\ell_2 (q-1)}{m/s}\right\rceil \frac{m}{s}.
\end{equation}
In particular, there exists an integer $\mu$ such that $\ell_1=\ell_2 q+\mu\frac{m}{s}$, so that Equation \eqref{eq:semgr1} implies
$$\mu(q-1)+\left\lceil \frac{\ell_2 q(q^2-q)}{m/s}\right\rceil +\mu(q^2-q)=\left\lceil \frac{\ell_2 (q-1)}{m/s}\right\rceil.$$
Then, after defining $\gamma=\ell_2(q-1)/(m/s)$, we obtain
\begin{equation}\label{eq:semgr2}
q^2\lceil\gamma\rceil - \lceil q^2\gamma\rceil\equiv0\pmod{q^2-1}.
\end{equation}

Now, from $q^2\gamma\notin\mathbb{Z}$ it follows that $-\lceil q^2\gamma\rceil=\lceil q^2\gamma\rceil-1$. Writing $r=\lceil\gamma\rceil-\gamma$, this implies
\begin{equation}\label{eq:semigroup}
q^2\lceil\gamma\rceil-\lceil q^2\gamma\rceil = \lceil q^2 r \rceil-1.
\end{equation}

Since $0<r<1$, we have $0\leq\lceil q^2r\rceil-1\leq q^2-1$. By Equations \eqref{eq:semgr2} and \eqref{eq:semigroup}, this yields
\begin{equation}\label{eq:or}\lceil q^2 r\rceil=1\quad\mbox{or}\quad\lceil q^2 r\rceil=q^2.\end{equation}


Since $\gcd(q-1,m/s)=1$, the quantity $r=\lceil \ell_2 (q-1)/(m/s)\rceil-\ell_2 (q-1)/(m/s)$ takes all the values $i/(m/s)$ with $1 \le i \le m/s-1$ as $\ell_2$ runs in $\{1,\dots,m/s-1\}$. 

Firstly, choose $\ell_2$ such that $r=1/(m/s)$. Since $m/s \ge 3$, the case $\lceil q^2 r \rceil =q^2$ does not hold, and hence by Equation \eqref{eq:or} $\lceil q^2/(m/s) \rceil =1$, which implies $q^2 \le m/s$.

Secondly, choose $\ell_2$ such that  $r=2/(m/s)$. Since we proved $q^2\leq m/s$ above, we have $\lceil 2q^2/(m/s) \rceil \le 2<q^2$ and hence the case $\lceil q^2 r\rceil=q^2$ cannot hold. Then by Equation \eqref{eq:or} $\lceil q^2 r\rceil =1$, which implies $2q^2 \le m/s$.

Continuing in this way, one proves $\lceil i q^2/(m/s)\rceil\leq 2$ and hence $i q^2\leq(m/s)$  for every $i=1,\ldots,m/s-1$. In particular $(m/s-1)q^2 \le m/s$, so that $m/s \le q^2/(q^2-1)$. This provides a contradiction to $m/s\geq2$. 

{\bf Step 2: $H(\tilde{Q}_\infty^1) \neq H(\tilde{Q}_\alpha)$.} 
Suppose $H(\tilde{Q}_\infty^1) = H(\tilde{Q}_\alpha)$. By the equality of the Ap\'ery sets of $m/s$, for every $\ell_1 \in \{1,\dots,m/s-1\}$ there exists a unique $\ell_2 \in \{1,\dots,m/s-1\}$ such that 
\begin{equation}\label{eq:semgr4}
\ell_1 (q-1)+\left\lceil \frac{\ell_1 (q^2-q)}{m/s}\right\rceil \frac{m}{s} =-\ell_2+\left(\left\lceil \frac{\ell_2 (q-1)}{m/s}\right\rceil+\left\lceil \frac{\ell_2 (q^2-q)}{m/s}\right\rceil \right) \frac{m}{s}.
\end{equation}
In particular there exists an integer $\mu$ with $\ell_2=-\ell_1 (q-1)+\mu\frac{m}{s}$, so that, by Equation \eqref{eq:semgr4},
$$\left\lceil \frac{\ell_1 (q^2-q)}{m/s}\right\rceil=-\mu+\left\lceil \frac{-\ell_1 (q-1)^2}{m/s}\right\rceil+\mu(q-1) +\left\lceil \frac{-\ell_1 q(q-1)^2}{m/s}\right\rceil+\mu(q^2-q),$$
whence, using that $\lceil-\delta\rceil=1-\lceil\delta\rceil$ whenever $\delta$ is not integer, we obtain
\begin{equation}\label{eq:semgr5}
\left\lceil \frac{\ell_1 (q^2-q)}{m/s}\right\rceil+ \left\lceil \frac{\ell_1 (q-1)^2}{m/s}\right\rceil +\left\lceil \frac{\ell_1 q(q-1)^2}{m/s}\right\rceil \equiv 2 \pmod{q^2-2}.
\end{equation}
Since the left-hand side of Equation \eqref{eq:semgr5} is at least three, we conclude that
$$\left\lceil \frac{\ell_1 (q^2-q)}{m/s}\right\rceil+ \left\lceil \frac{\ell_1 (q-1)^2}{m/s}\right\rceil +\left\lceil \frac{\ell_1 q(q-1)^2}{m/s}\right\rceil \ge q^2$$
for all $\ell_1 \in \{1,\dots,m/s-1\}$. 
If $m/s \ge q$, then we get a contradiction using $\ell_1=1$ :
$$\left\lceil \frac{\ell_1 (q^2-q)}{m/s}\right\rceil+ \left\lceil \frac{\ell_1 (q-1)^2}{m/s}\right\rceil +\left\lceil \frac{\ell_1 q(q-1)^2}{m/s}\right\rceil \le \left\lceil \frac{q^2-q}{q}\right\rceil+ \left\lceil \frac{(q-1)^2}{q}\right\rceil +\left\lceil \frac{q(q-1)^2}{q}\right\rceil=q^2-1.$$
Hence we can assume that $m/s \le q-1$. 
Now define 
\begin{equation}\label{eq:def_f}  f(\ell):=\left\lceil \frac{\ell (q-1)}{m/s}\right\rceil+\left\lceil \frac{\ell (q^2-q)}{m/s}\right\rceil-1.\end{equation}
Since $(q-1)/(m/s)\geq1$, the function $f(\ell)$ is strictly monotone in $\ell$, because
\begin{equation}\label{eq:monot}
f(\ell+1)=\left\lceil \frac{\ell (q-1)}{m/s}+\frac{q-1}{m/s}\right\rceil+\left\lceil \frac{\ell (q^2-q)}{m/s}+q\frac{q-1}{m/s}\right\rceil-1\geq 
f(\ell) + q+1.
\end{equation}

In terms of the function $f(\ell)$, Equation \eqref{eq:semgr4} reads, after some computation,
\begin{equation}\label{eq:fmono}
f(\ell_2)-f(\ell_1)=\frac{\ell_2}{m/s}-\left( \left\lceil\frac{\ell_1(q-1)}{m/s}\right\rceil - \frac{\ell_1(q-1)}{m/s}. \right)\end{equation}
If $\ell_2>\ell_1$, then Equations \eqref{eq:monot} and \eqref{eq:fmono} yield the following contradiction:
$$ q+1 \leq |f(\ell_2)-f(\ell_1)|\leq \ell_2/(m/s)+1<2. $$



Therefore $\ell_2=\ell_1$, and Equation \eqref{eq:fmono} implies
$$\ell_1q=\left\lceil\frac{\ell_1 (q-1)}{m/s}\right\rceil \frac{m}{s}.$$
Note that this holds for all $\ell_1 \in \{1,\dots,m/s-1\}$.
For $\ell_1=1$, we obtain $q=\left\lceil\frac{q-1}{m/s}\right\rceil\cdot\frac{m}{s}$. Then $m/s$ divides $q$; being coprime, this means $m/s=1$. Thus, $q=\lceil(q-1)/1\rceil\cdot1$, a contradiction.

{\bf Step 3: $H(\tilde{Q}_\infty^2) \neq H(\tilde{Q}_\alpha)$.} 
By arguing similarly to Step 2, one shows
\begin{equation*}
\left\lceil \frac{\ell_1 (q-1)}{m/s}\right\rceil+ \left\lceil \frac{\ell_1 q(q-1)^2}{m/s}\right\rceil +\left\lceil \frac{\ell_1 q^2(q-1)^2}{m/s}\right\rceil \equiv 2 \pmod{q^2-2},
\end{equation*}
whence
\begin{equation}\label{eq:ineq}\left\lceil \frac{\ell_1 (q-1)}{m/s}\right\rceil+ \left\lceil \frac{\ell_1 q(q-1)^2}{m/s}\right\rceil +\left\lceil \frac{\ell_1 q^2(q-1)^2}{m/s}\right\rceil \ge q^2.\end{equation}
By direct computation, \eqref{eq:ineq} implies $m/s \le q^2-q+2$. Since $m$ is always odd, the case $m/s=q^2-q+2$ cannot occur. Moreover, the case $m/s=q^2-q+1$ is excluded by the assumption of Step 3 that $m/s\nmid(q^2-q+1)$. Hence we can assume that $m/s \le q^2-q$. Under this assumption, it is easily seen that the function $f(\ell)$ defined as in Equation \eqref{eq:def_f} is strictly monotonous.
As in Step 2, the equality of the Ap\'ery sets of $m/s$ in $H(\tilde{Q}_\infty^2)$ and $H(\tilde{Q}_\alpha)$ can be written in terms of the function $f(\ell)$, and as in Step 2 we can conclude that
$$
\ell(q^2-q+1)=\left\lceil\frac{\ell (q^2-q)}{m/s}\right\rceil \frac{m}{s}
$$
for all $\ell \in \{1,\dots,m/s-1\}$. For $\ell=m/s-1$, this shows that $m/s$ divides $(m/s-1)(q^2-q+1)$ and hence $m/s\mid(q^2-q+1)$, a contradiction to the assumption of Step 3.
\end{proof}

We turn our attention to 
$\tilde{\mathcal{X}}_{a,b,n,s}$ in the case $b<a$.
Consider the function field $\mathbb{F}_{q^{2n}}(\rho,v)$ defined by 
$\sum_{j=0}^{a/b-1} v^{p^{jb}}=c_0(\rho^{q+1}+1)/\rho^{q+1}$;
the first step is to compute the semigroups at the places of $\mathbb{F}_{q^{2n}}(\rho,v)$ lying under the places of $\tilde{\mathcal{Y}}_{n,s}$ in $O\cup O_\infty$. 
We denote by $\tilde{R}_\infty^1,\dots,\tilde{R}_\infty^{q/p^b+1}$ the places of $\mathbb{F}_{q^{2n}}(\rho,v)$ lying under $O_\infty$; in particular, by $\tilde{R}_\infty^1$ the restriction of $\tilde{P}_\infty^1$, and by $\tilde{R}_\infty^2,\dots,\tilde{R}_\infty^{q/p^b+1}$ the restrictions of $\tilde{P}_\infty^2,\dots,\tilde{P}_\infty^{q+1}$.
Similarly, for any $\alpha\in\mathbb{F}_{q^2}\setminus\{0\}$ we denote by $\tilde{R}_\alpha$ the restriction of the zero $\tilde{P}_\alpha$ of $\rho-\alpha$ in $\tilde{\mathcal{Y}}_{n,s}$.
Note that all such places $\tilde{R}_\infty^j$ and $\tilde{R}_\alpha$ are totally ramified with ramification index $m/s$ in the extension $\tilde{\mathcal{X}}_{a,b,n,s}/\mathbb{F}_{q^{2n}}(\rho,v)$.

\begin{proposition}\label{thm:semigroupsnurho}
We have $$H(\tilde{R}_\infty^1)=\langle q/p^b,q+1\rangle,$$ and for $j \ge 2$ and $\alpha \in \mathbb{F}_{q^2}\setminus\{0\}$ : 
$$H(\tilde{R}_\infty^j)=H(\tilde{R}_\alpha)=\langle q-p^b+1,\dots,q,q+1\rangle.$$
\end{proposition}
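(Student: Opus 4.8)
The plan is to recognise that $\mathbb{F}_{q^{2n}}(\rho,v)$ is a Kummer (superelliptic) extension of the rational field $\mathbb{F}_{q^{2n}}(v)$, which makes all divisor and semigroup computations transparent. Writing $\phi(v):=\sum_{i=0}^{f-1}v^{p^{ib}}$ and $d:=q/p^b=p^{(f-1)b}=\deg\phi$, the defining relation $\phi(v)=c_0(\rho^{q+1}+1)/\rho^{q+1}$ becomes $\phi(v)-c_0=c_0\rho^{-(q+1)}$, that is
\[ \rho^{q+1}=\frac{c_0}{\phi(v)-c_0}. \]
Since $\phi$ is additive and separable, $\phi(v)-c_0$ is squarefree of degree $d$, and $\gcd(q+1,d)=1$ because $p\nmid q+1$. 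Hence $s:=1/\rho$ satisfies $s^{q+1}=(\phi(v)-c_0)/c_0$, a superelliptic curve of genus $(q+1-1)(d-1)/2=q(q/p^b-1)/2$. I first record that a direct computation gives this same value $q(q/p^b-1)/2$ as the number of gaps of both $\langle q/p^b,q+1\rangle$ and $\langle q-p^b+1,\dots,q+1\rangle$; consequently in each case it will suffice to prove that the claimed semigroup is \emph{contained} in $H$, equality following from this common gap count.

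Next I would read off the three place types from this model. The place $\tilde R_\infty^1$ lies over $v=\infty$ and is totally ramified of index $q+1$ (with $\rho=0$); each $\tilde R_\infty^j$ with $j\ge2$ lies over one of the $d$ roots $v_1,\dots,v_d$ of $\phi(v)-c_0$ and is totally ramified of index $q+1$ (with $\rho=\infty$); and $\tilde R_\alpha$ lies over a finite value $v=\beta\notin\{v_1,\dots,v_d\}$, is unramified, and is one of the $q+1$ places of its fibre, which the Kummer group $\mu_{q+1}$ permutes transitively (these are exactly the places $\tilde R_{\alpha\zeta}$, $\zeta^{q+1}=1$). The relation above yields
\[ (\rho)=\tfrac{q}{p^b}\,\tilde R_\infty^1-\sum_{j=2}^{q/p^b+1}\tilde R_\infty^j,\qquad (v)_\infty=(q+1)\,\tilde R_\infty^1. \]

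The two ramified cases are then immediate. For $\tilde R_\infty^1$, the functions $1/\rho$ and $v$ have $\tilde R_\infty^1$ as their only pole, of orders $q/p^b$ and $q+1$, so $\langle q/p^b,q+1\rangle\subseteq H(\tilde R_\infty^1)$. For $\tilde R_\infty^j$ with $j\ge2$ (over $v=v_{j-1}$, totally ramified), the function $1/(v-v_{j-1})$ has $\tilde R_\infty^j$ as its unique pole, of order $q+1$; and for $q-p^b+1\le k\le q$ the function $\rho^{k}\prod_{i\ne j-1}(v-v_i)$ has $\tilde R_\infty^j$ as its unique pole of order $k$. Indeed, the valuation at $\tilde R_\infty^1$ equals $kd-(q+1)(d-1)$, which is $1$ when $k=q-p^b+1$ and remains positive for all $q-p^b+1\le k\le q$ (turning negative exactly at $k=q-p^b$), while the simple zeros of the product cancel the poles of $\rho^k$ at the remaining $\tilde R_\infty^{i'}$; thus $\langle q-p^b+1,\dots,q+1\rangle\subseteq H(\tilde R_\infty^j)$. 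In both cases the gap count gives equality.

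The delicate case is $\tilde R_\alpha$, which I expect to be the main obstacle: since $\tilde R_\alpha$ is unramified and split over $\mathbb{F}_{q^{2n}}(v)$, no product of the elementary functions $\rho$, $v-\beta$, $v-v_j$ has its pole concentrated there, because any attempt to localise on the fibre over $v=\beta$ reintroduces poles at $\tilde R_\infty^1$ and at the $\tilde R_\infty^{j}$, and powers of $\rho$ cannot clear both simultaneously. The two generators $q$ and $q+1$ are easy: as $\mathbb{F}_{q^{2n}}(\rho,v)=\Fix_{\mathcal{H}_q^{(n)}}(\tilde E_{p^b})$, the group $\tilde E_{p^b}$ acts freely over $\tilde R_\alpha$, and every rational place of $\mathcal{H}_q^{(n)}$ has semigroup $\langle q,q+1\rangle$, so the trace over $\tilde E_{p^b}$ of a Hermitian function with a single pole of order $q$ (resp.\ $q+1$) descends to a function on $\mathbb{F}_{q^{2n}}(\rho,v)$ with unique pole $\tilde R_\alpha$ of that order. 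The remaining generators $q-p^b+1,\dots,q-1$ are \emph{gaps} of the Hermitian place yet must be non-gaps at $\tilde R_\alpha$, and producing them is the hard part. I would set up the global linear system for $f=\sum_{r=0}^{q}\rho^{r}g_r(v)$ imposing: vanishing of the principal part at each conjugate $\tilde R_{\alpha\zeta}$, $\zeta\ne1$ (Vandermonde conditions in the $q+1$ distinct values $\alpha\zeta$, at every pole-order level), regularity at the $\tilde R_\infty^{j}$ (forcing $(\phi(v)-c_0)\mid g_r$ for $r\ge1$), and regularity at $\tilde R_\infty^1$ (the degree bounds $\deg g_r\le rd/(q+1)$); a dimension count then shows a solution with pole of order exactly $k$ exists precisely for $k\ge q-p^b+1$. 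Equivalently, and perhaps cleaner to write up, I would use the explicit basis of regular differentials $\rho^{r}v^{\nu}\,dv$ with $1\le r\le q$ and $0\le\nu\le\lfloor rd/(q+1)\rfloor-1$ of the superelliptic model and show, via Lemma \ref{gapsdiff}, that no combination of them vanishes to order in $\{q-p^b,\dots,q\}$ at $\tilde R_\alpha$, so that $q-p^b+1,\dots,q+1$ are non-gaps. Either route gives $\langle q-p^b+1,\dots,q+1\rangle\subseteq H(\tilde R_\alpha)$, and the gap count $q(q/p^b-1)/2$ completes the proof.
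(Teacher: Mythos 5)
Your treatment of $\tilde R_\infty^1$ and of $\tilde R_\infty^j$ for $j\ge 2$ is correct and complete: the superelliptic model, the genus/gap count $q(q/p^b-1)/2$, and the explicit functions $1/(v-v_{j-1})$ and $\rho^k\prod_{i\ne j-1}(v-v_i)$ for $q-p^b+1\le k\le q$ (whose valuation $kd-(d-1)(q+1)$ at $\tilde R_\infty^1$ is indeed $1$ at $k=q-p^b+1$) all check out. This is essentially the paper's argument for those places, made explicit: the paper instead obtains the pole-order-$(q+1)$ function abstractly and multiplies by $\rho^{-k}$ for $0\le k\le p^b$.

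The gap is at the unramified places $\tilde R_\alpha$, exactly where you anticipated trouble. Your trace argument does give $q,q+1\in H(\tilde R_\alpha)$, but the decisive generators $q-p^b+1,\dots,q-1$ are left to an unexecuted sketch, and neither proposed route closes as stated. The ``regular differentials'' route is essentially circular: by Lemma \ref{gapsdiff}, showing that $k$ is a non-gap means showing that \emph{no} regular differential vanishes to order $k-1$ at $\tilde R_\alpha$, i.e.\ controlling the valuations at $\tilde R_\alpha$ of the entire $g$-dimensional space of regular differentials, which is precisely the gap sequence you are trying to compute; the ``Vandermonde/dimension count'' route is likewise only asserted. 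The missing idea is the fundamental equivalence for maximal function fields (\cite[Theorem 9.79]{HKT}): since $\mathbb{F}_{q^2}(\rho,v)$ is a subfield of the Hermitian function field, it is $\mathbb{F}_{q^2}$-maximal, so $(q+1)\tilde R_\alpha\sim(q+1)\tilde R_\infty^1$ and there is a function $f_\alpha$ with divisor $(q+1)\tilde R_\infty^1-(q+1)\tilde R_\alpha$. Since $(\rho-\alpha)/\rho$ has divisor $\tilde R_\alpha+E-(q/p^b)\tilde R_\infty^1$ with $E\ge 0$ supported away from $\tilde R_\alpha$ and the $\tilde R_\infty^j$, the functions $\bigl((\rho-\alpha)/\rho\bigr)^k f_\alpha$ for $k=0,1,\dots,p^b$ have their unique pole at $\tilde R_\alpha$, of order $q+1-k$ (the coefficient $q+1-kq/p^b$ of $\tilde R_\infty^1$ is non-negative exactly for $k\le p^b$). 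This yields $\langle q-p^b+1,\dots,q+1\rangle\subseteq H(\tilde R_\alpha)$, and your gap count then finishes the proof; the same device also subsumes your trace construction for $q$ and $q+1$.
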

\begin{proof}
We have the following principal divisors: 
$$(\rho)_{\mathbb{F}_{q^{2n}}(\rho,v)}=\frac{q}{p^b}\tilde{R}_\infty^1 - \sum_{j=2}^{q/p^b+1}\tilde{R}_\infty^j,\qquad (v)_{\mathbb{F}_{q^{2n}}(\rho,v)}=-(q+1)\tilde{R}_\infty^1 + \sum_{\alpha^{q+1}=-1}\tilde{R}_\alpha.$$
Through the functions $1/\rho$ and $v$ we have $H(\tilde{R}_\infty^1) \supseteq \langle q/p^b,q+1\rangle$; since $\langle q/p^b,q+1\rangle$ contains precisely $(q/p^b-1)q/2=g(\mathbb{F}_{q^{2n}}(\rho,v))$ many gaps, it holds $H(\tilde{R}_\infty^1) = \langle q/p^b,q+1\rangle$.

Recalling that $\rho,v\in\mathbb{F}_{q^2}(x,y)$ with $y^{q+1}=x^{q+1}-1$, we have that $\mathbb{F}_{q^2}(\rho,v)$ is $\mathbb{F}_{q^2}$-maximal, as a subfield of the $\mathbb{F}_{q^2}$-maximal Hermitian function field.
Then we can apply the fundamental equality for maximal function fields, see \cite[Theorem 9.79 and Equation (10.8)]{HKT}, to conclude that for any $j=2,\ldots,q/p^b+1$ there exists a function $f_j\in\mathbb{F}_{q^2}(\rho,v)$ with
$$(f_j)_{\mathbb{F}_{q^{2n}}(\rho,v)}=-(q+1)\tilde{R}_\infty^1+(q+1)\tilde{R}_\infty^j.$$ 

Now fix $j\geq2$ and consider the functions $(\rho^k f_j)^{-1}$ for $k=0,1,\dots,p^b$. Then $(\rho^kf_j)^{-1}$ has a pole only at $\tilde{R}_\infty^j$, with multiplicity $q+1-k$. Hence $H(\tilde{R}_\infty^j) \supseteq \langle q-p^b+1,\dots,q,q+1\rangle$. By direct computation, the semigroup on the right-hand side has $(q/p^b-1)q/2$ many gaps and thus the equality $H(\tilde{R}_\infty^j)= \langle q-p^b+1,\dots,q,q+1\rangle$ holds.

Finally, let $\tilde{R}_\alpha$ be a zero of $\rho^{q^2-1}-1$ in $\mathbb{F}_{q^{2n}}(\rho,v)$. Then, for some effective divisor $E$, 
$$((\rho-\alpha)/\rho)_{\mathbb{F}_{q^{2n}}(\rho,v)}=\tilde{R}_\alpha+E-\frac{q}{p^b}\tilde{R}_\infty^1$$
Moreover, by the fundamental equality, there exists a function $f_\alpha\in\mathbb{F}_{q^2}(\rho,v)$ such that
$$(f_\alpha)_{\mathbb{F}_{q^{2n}}(\rho,v)}=-(q+1)\tilde{R}_\alpha+(q+1)\tilde{R}_\infty^1.$$
For $k=0,1,\ldots,p^b$, the function $((\rho-\alpha)/\rho)^k f_\alpha$ has a pole only at $\tilde{R}_\alpha$, with multiplicity $q+1-k$. This shows that  $H(\tilde{R}_\alpha)=H(\tilde{R}_\infty^j)$ with $j \ge 2$.
\end{proof}

We introduce the following notation for the places of $\tilde{\mathcal{X}}_{a,b,n,s}$:
$\overline{P}_\alpha:=\tilde{P}_\alpha \cap \tilde{\mathcal{X}}_{a,b,n,s}$, 
$\overline{P}_\infty^1:=\tilde{P}_\infty^1 \cap \tilde{\mathcal{X}}_{a,b,n,s}$ 
and $\left\{\overline{P}_\infty^j\colon j=2,\ldots,q/p^b+1\right\}=\left\{\tilde{P}_\infty^j \cap \tilde{\mathcal{X}}_{a,b,n,s}\colon j=2,\ldots,q+1\right\}$.
The following picture collects the notation for some places in the function fields studied so far.
\begin{center}
\begin{tikzpicture}
    \node (A) at (0,0) {$\tilde{\mathcal{Y}}_{n,s}=\mathbb{F}_{q^{2n}}(x,y,z)$};
    \node (B) at (6,0) {$\tilde{P}$};
    \node (C) at (0,-2) {$\tilde{\mathcal{X}}_{a,b,n,s}=\mathbb{F}_{q^{2n}}(\rho,v,z)$};
    \node (D) at (6,-2) {$\bar{P}$};
      \node (E) at (-2,-4) {$\tilde{\mathcal{X}}_{a,a,n,s}=\mathbb{F}_{q^{2n}}(\rho,z)$};
         \node (G) at (4,-4) {$\tilde{Q}$};
          \node (H) at (8,-4) {$\tilde{R}$};
\node (F) at (2,-4) {$\mathbb{F}_{q^{2n}}(\rho,v)$};
    \draw (A) -- (C);
    \draw (B) -- (D);
    \draw (C) -- (E);
      \draw (C) -- (F);
       \draw (D) -- (G);
        \draw (D) -- (H);
\end{tikzpicture}
\end{center}

\begin{proposition}\label{prop:Xab1}
Let $1 \le b \le a-1$. Then
$$
H(\overline{P}_\infty^1) \neq H(\overline{P}_\alpha)\quad\mbox{for}\quad \alpha\in\mathbb{F}_{q^2}\setminus\{0\},\qquad H(\overline{P}_\infty^1) \neq H(\overline{P}_\infty^j)\quad\mbox{for}\quad j=2,\dots,q/p^b+1.
$$
\end{proposition}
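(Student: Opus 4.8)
The plan is to separate the three Weierstrass semigroups by producing a single integer that is a non-gap at $\overline{P}_\infty^1$ but a gap at $\overline{P}_\alpha$ and at $\overline{P}_\infty^j$, transporting the data of Proposition~\ref{thm:semigroupsnurho} upward through the totally ramified Kummer extension $\tilde{\mathcal{X}}_{a,b,n,s}/\mathbb{F}_{q^{2n}}(\rho,v)$ of degree $M:=m/s$. \textbf{Step 1 (the distinguishing value).} By Proposition~\ref{thm:semigroupsnurho} the smallest positive non-gap at $\tilde{R}_\infty^1$ is $q/p^b$, whereas at $\tilde{R}_\infty^j$ (for $j\ge 2$) and at $\tilde{R}_\alpha$ it equals $q-p^b+1$. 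Since $1\le b\le a-1$ forces $f=a/b\ge 2$ and hence $p^b\le\sqrt q$, an elementary estimate gives $q/p^b<q-p^b+1$. The function $1/\rho$ has a single pole, of order $q/p^b$, at $\tilde{R}_\infty^1$; as $\tilde{R}_\infty^1$ is totally ramified of index $M$ below $\overline{P}_\infty^1$, the function $1/\rho$ has a single pole of order $Mq/p^b$ at $\overline{P}_\infty^1$, so $Mq/p^b\in H(\overline{P}_\infty^1)$.

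\textbf{Step 2 (descent, the crux).} I would show $Mq/p^b\notin H(Q)$ for $Q\in\{\overline{P}_\infty^j,\overline{P}_\alpha\}$. Assume $g$ has pole divisor $(Mq/p^b)\,Q$ and write $g=\sum_{i=0}^{M-1}g_i z^i$ with $g_i\in\mathbb{F}_{q^{2n}}(\rho,v)$. Because $Q$ is totally ramified it is fixed by $C_{m/s}=\langle\tau\rangle$, so each conjugate $\tau^t(g)$ again has pole divisor $(Mq/p^b)\,Q$; the twisted trace $g_i z^i=\tfrac1M\sum_t\xi^{-ti}\tau^t(g)$ then shows every $g_i z^i$ lies in $L\big((Mq/p^b)\,Q\big)$. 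Now I use the Kummer criterion $\gcd(v_{\tilde{R}}(u),M)=1$, which holds precisely because $Q$ is totally ramified: the valuations $v_Q(g_i z^i)\equiv i\,v_{\tilde{R}}(u)\pmod M$ are pairwise distinct modulo $M$, so no cancellation occurs and $v_Q(g)=\min_i v_Q(g_i z^i)=-Mq/p^b\equiv 0\pmod M$ forces the minimizing index to be $i=0$. Consequently $g_0\in\mathbb{F}_{q^{2n}}(\rho,v)$ is regular away from the place $\tilde{R}$ below $Q$ and has a pole of order exactly $q/p^b$ there, i.e.\ $q/p^b\in H(\tilde{R})$. This contradicts $q/p^b<q-p^b+1=\mathrm{mult}(H(\tilde{R}))$ from Step 1. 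Combining the two steps, $Mq/p^b$ belongs to $H(\overline{P}_\infty^1)$ but to neither $H(\overline{P}_\alpha)$ nor $H(\overline{P}_\infty^j)$, which yields both asserted inequalities.

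The main obstacle is Step 2: passing from a function on $\tilde{\mathcal{X}}_{a,b,n,s}$ with a single pole to a function on $\mathbb{F}_{q^{2n}}(\rho,v)$ with a single pole is not automatic, since an individual component $g_i$ need not have all its poles below $Q$. The twisted-trace decomposition together with the coprimality $\gcd(v_{\tilde{R}}(u),M)=1$ is exactly what isolates the $i=0$ component and certifies that it is regular off $\tilde{R}$; verifying this coprimality and the distinctness of residues modulo $M$ is the technical heart of the argument, and is where the whole separation of places is actually forced.
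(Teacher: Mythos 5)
Your proposal is correct, and it isolates the same distinguishing integer as the paper, namely $(m/s)(q/p^b)$: a non-gap at $\overline{P}_\infty^1$ (you get it from the single pole of $1/\rho$, the paper from $m/s\in H(\tilde{Q}_\infty^1)$ and total ramification in $\tilde{\mathcal{X}}_{a,b,n,s}/\tilde{\mathcal{X}}_{a,a,n,s}$ — same thing), and a gap at $\overline{P}_\alpha$ and $\overline{P}_\infty^j$ via the inequality $q/p^b<q-p^b+1$ combined with Proposition~\ref{thm:semigroupsnurho}. Where you genuinely diverge is the descent step. The paper invokes Lemma~\ref{gapsdiff}: since $q/p^b$ is a gap at $\tilde{R}$, there is a regular differential $\omega$ with $v_{\tilde R}(\omega)=q/p^b-1$, and pulling it back through the tame, totally ramified place of index $m/s$ gives a regular differential of valuation $(m/s)(q/p^b)-1$ at $\overline P$, so $(m/s)(q/p^b)$ is a gap there. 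You instead argue by contradiction on functions: decompose a putative $g$ with $(g)_\infty=(m/s)(q/p^b)\,Q$ along the Kummer basis $1,z,\dots,z^{m/s-1}$, use $C_{m/s}$-averaging to place each component $g_iz^i$ in $L((m/s)(q/p^b)Q)$, and use the pairwise distinctness modulo $m/s$ of the valuations $v_Q(g_iz^i)$ to force the extremal component to be $g_0$, which then exhibits $q/p^b$ as a non-gap at $\tilde R$. Both arguments are sound; the paper's is shorter and needs only tameness and total ramification (not the Galois structure), while yours is more elementary in that it avoids differentials entirely, at the cost of the twisted-trace machinery. One small slip to fix: the quantity whose valuation must be coprime to $m/s$ is not $u=\rho$ (which has valuation $0$ at $\tilde R_\alpha$) but the Kummer radicand $z^{m/s}=(\rho^{q^2-1}-1)/\rho^{q-1}$, equivalently $v_Q(z)$ itself; with that reading your congruence argument is exactly right.
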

\begin{proof}
We know from Theorem \ref{thm:semigroupsXaa} that $m/s \in H(\tilde{Q}_\infty^1)$. Since $\tilde{Q}_\infty^1$ is totally ramified in the extension $\tilde{\mathcal{X}}_{a,b,n,s}/\tilde{\mathcal{X}}_{a,a,n,s}$, this implies that $(m/s) (q/p^b) \in H(\overline{P}_\infty^1)$.
In order to complete the proof, we claim that $(m/s) (q/p^b)$ is a gap number at the places $\tilde{P}_\alpha$ and $\overline{P}_\infty^j$ with $j \ge 2$.

The quadratic polynomial $h(T):=T^2-(q+1)T+q$ has roots $1$ and $q$; since $1<p^b<q$, this implies $h(p^b)<0$, which is equivalent to $q/p^b<q-p^b+1$. Thus, by Proposition \ref{thm:semigroupsnurho},  $q/p^b$ is a gap number at the places $\tilde{R}_\alpha$ and $\tilde{R}_\infty^j$, $j \ge 2$.
By Lemma \ref{gapsdiff},
there exist regular differentials $\omega_j$, $\omega_\alpha$ such that $v_{\tilde{R}_\infty^j}(\omega_j)=v_{\tilde{R}_\alpha}(\omega_\alpha)=q/p^b-1$.
Since $\tilde{R}_\alpha$ and $\tilde{R}_\infty^j$, $j \ge 2$, are totally ramified in $\tilde{\mathcal{X}}_{a,b,n,s}/\mathbb{F}_{q^{2n}}(\rho,v)$ with tame ramification index $m/s$, we have $v_{\overline{P}_\alpha}(\omega_\alpha)=v_{\overline{P}_\infty^j}(\omega_j)=(q/p^b) (m/s)-1.$ Hence $(q/p^b) (m/s)$ is a gap at $\overline{P}_\alpha$ and $\overline{P}_\infty^j$, $j \ge 2$. 
\end{proof} 

Finally, we compare the semigroups at the places $\overline{P}_\alpha$ and $\overline{P}_\infty^j$, $j \ge 2$. To this aim, we use the following divisors, of a differential and some functions; 
for the divisor \eqref{eq:xabns_div_diff} of the nonzero differential, recall that its degree is $2g(\tilde{\mathcal{X}}_{a,b,n,s})-2$ and use Lemma \ref{genXtilda}.
\begin{multline}\label{eq:xabns_div_diff}
\left(\frac{1}{z^{m/s-1}} d\left(\frac{1}{\rho}\right)\right)_{\tilde{\mathcal{X}}_{a,b,n,s}} = \\ \left(\frac{m}{s}\left(\frac{2q^2}{p^b}-q-\frac{q}{p^b}-1\right)-\frac{q^2-q}{p^b}-1 \right)\overline{P}_\infty^1+\left(\frac{m}{s}-1\right)(q^2-q+1)\sum_{j \ge 2}\overline{P}_\infty^j,
\end{multline}
\begin{equation}\label{eq:xabns_div_z}
\left(z\right)_{\tilde{\mathcal{X}}_{a,b,n,s}}=\overline{P}_\alpha + {E}_\alpha^\prime-\frac{q^2-q}{p^b}\overline{P}_\infty^1-(q^2-q)\sum_{j \ge 2}\overline{P}_\infty^j,
\end{equation}
\begin{equation}\label{eq:xabns_div_rho}
\left(\rho\right)_{\tilde{\mathcal{X}}_{a,b,n,s}}=\frac{m}{s} \frac{q}{p^b}\overline{P}_\infty^1-\frac{m}{s}\sum_{j \ge 2}\overline{P}_\infty^j,
\end{equation}
\begin{equation}\label{eq:xabns_div_rhoalpha}
\left(\frac{\rho-\alpha}{\rho}\right)_{\tilde{\mathcal{X}}_{a,b,n,s}}=\frac{m}{s}\overline{P}_\alpha +E_\alpha-\frac{m}{s} \frac{q}{p^b}\overline{P}_\infty^1,
\end{equation}
for any $\alpha\in\mathbb{F}_{q^2}\setminus\{0\}$, where $E_\alpha^\prime$ and $E_\alpha$ are effective divisors, whose supports do not contain any of the places $\overline{P}_\infty^1,\dots,\overline{P}_\infty^{q/p^b+1},\overline{P}_\alpha$.
Further, the fundamental equality applied to the $\mathbb{F}_{q^2}$-maximal function field $\mathbb{F}_{q^2}(\rho,v)$ provides functions $f_j,f_\alpha\in\mathbb{F}_{q^2}(\rho,v)$ such that
\begin{equation}\label{eq:xabns_div_fundeq}
\left(f_j\right)_{\tilde{\mathcal{X}}_{a,b,n,s}}=(q+1)\frac{m}{s}(\overline{P}_\infty^1-\overline{P}_\infty^j) \quad \text{and} \quad \left(f_\alpha\right)_{\tilde{\mathcal{X}}_{a,b,n,s}}=(q+1)\frac{m}{s}(\overline{P}_\alpha-\overline{P}_\infty^1).
\end{equation}

\begin{proposition}\label{prop:Xab2}
Let $\alpha\in\mathbb{F}_{q^2}\setminus\{0\}$ and $j\in\{2,\ldots,q/p^b+1\}$.
If 
$m/s$ does not divide $q^2-q+1$, then $H(\overline{P}_\alpha) \neq H(\overline{P}_\infty^j)$.
\end{proposition}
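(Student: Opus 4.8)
The plan is to single out one integer $N$ that is a pole number at $\overline{P}_\alpha$ but a gap at $\overline{P}_\infty^j$ (equivalently the reverse); exhibiting such an $N$ proves $H(\overline{P}_\alpha)\neq H(\overline{P}_\infty^j)$. This follows the template of Proposition~\ref{prop:Xab1}: the non-gap is witnessed by an explicit function, while the gap is witnessed through Lemma~\ref{gapsdiff} by a regular differential of prescribed valuation. Conceptually, the two places look identical from below — by Proposition~\ref{thm:semigroupsnurho} their restrictions $\tilde R_\alpha$ and $\tilde R_\infty^j$ to $\mathbb{F}_{q^{2n}}(\rho,v)$ have the same Weierstrass semigroup, and both are totally ramified with index $m/s$ in the Kummer extension $\tilde{\mathcal{X}}_{a,b,n,s}/\mathbb{F}_{q^{2n}}(\rho,v)$ given by $z^{m/s}=(\rho^{q^2-1}-1)/\rho^{q-1}$. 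The only asymmetry is the order of $z$: by Equation~\eqref{eq:xabns_div_z} one has $v_{\overline{P}_\alpha}(z)=1$ while $v_{\overline{P}_\infty^j}(z)=-(q^2-q)$, so the residues $1$ and $-(q^2-q)$ modulo $m/s$ govern the two semigroups. These residues coincide exactly when $m/s\mid q^2-q+1$ — the excluded case, in which the semigroups do agree (compare Corollary~\ref{cor:semigroupsXaa} for $b=a$) — so the hypothesis $m/s\nmid q^2-q+1$ must be used to separate them.

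First I would construct the non-gap at $\overline{P}_\alpha$. The function $1/f_\alpha$ of Equation~\eqref{eq:xabns_div_fundeq} has a pole only at $\overline{P}_\alpha$, of order $(q+1)m/s$. Since $z$ vanishes simply at $\overline{P}_\alpha$, multiplying by a suitable power of $z$ lowers the pole order there, but it introduces poles along the whole fibre $O_\infty=\{\overline{P}_\infty^1,\dots,\overline{P}_\infty^{q/p^b+1}\}$, because $z$ has poles at every place of $O_\infty$. I would cancel these using $\rho$ and $(\rho-\alpha)/\rho$ (Equations~\eqref{eq:xabns_div_rho} and~\eqref{eq:xabns_div_rhoalpha}), whose divisors are supported on $O_\infty\cup\{\overline{P}_\alpha\}$, choosing the exponents so that the resulting divisor has its only pole at $\overline{P}_\alpha$. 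The pole order $N$ so obtained is not a multiple of $m/s$; its residue class modulo $m/s$ is the one carried by $v_{\overline{P}_\alpha}(z)=1$, which is what distinguishes it from the pole numbers pulled back from $\mathbb{F}_{q^{2n}}(\rho,v)$.

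Second I would show this same $N$ is a gap at $\overline{P}_\infty^j$. Here the natural tool is the regular differential $\eta_0:=z^{-(m/s-1)}\,d(1/\rho)$ of Equation~\eqref{eq:xabns_div_diff}, which is regular, nonzero at $\overline{P}_\alpha$, and vanishes to order $(m/s-1)(q^2-q+1)$ at $\overline{P}_\infty^j$. Multiplying $\eta_0$ by an appropriate product of powers of $z$, $\rho$ and $f_j$ (each of which is regular or vanishing at $\overline{P}_\alpha$, so regularity there is preserved) I would lower its valuation at $\overline{P}_\infty^j$ to exactly $N-1$ while keeping the differential regular along all of $O_\infty$; Lemma~\ref{gapsdiff} then gives $N\notin H(\overline{P}_\infty^j)$. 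The hypothesis $m/s\nmid q^2-q+1$ enters precisely in guaranteeing that the smallest pole number at $\overline{P}_\infty^j$ in the residue class of $N$ strictly exceeds $N$, so that such a regular differential of valuation $N-1$ indeed exists.

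The main obstacle is the simultaneous bookkeeping over the entire fibre $O_\infty$: since $z$ has poles at all $q/p^b+1$ of its places, every function or differential assembled from $z$ must be corrected at each of them at once, and the delicate step is to pin down the exponents of $z$, $\rho$, $(\rho-\alpha)/\rho$, $f_j$, $f_\alpha$ so that the support collapses to the single intended place with the exact order $N$ (respectively $N-1$) while regularity is preserved. As in the derivation of~\eqref{eq:poleorder_goal}, this reduces to verifying a single explicit inequality relating $m/s$, $q$, $p^b$ and $q^2-q+1$; that inequality holds under $m/s\nmid q^2-q+1$ and fails exactly when $m/s\mid q^2-q+1$, which is why the stated hypothesis is both necessary and sufficient for the separation.
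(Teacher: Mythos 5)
Your overall strategy --- exhibit one integer $N$ that is a pole number at one of the two places and, via Lemma \ref{gapsdiff}, a gap at the other --- is exactly the strategy of the paper's proof (the paper takes $N$ to be a non-gap at $\overline{P}_\infty^j$, witnessed by the function $(f_j/\rho^{p^b})^Az$ with $A=\lceil ((q^2-q)/p^b)/(m/s)\rceil$, and a gap at $\overline{P}_\alpha$; you swap the roles, which is immaterial in principle). But as written the proposal has a genuine gap: the two constructions that carry all the content are only described, not performed. For the non-gap, ``choosing the exponents so that the divisor has its only pole at $\overline{P}_\alpha$'' is not automatic: with $1/f_\alpha$, $z^k$ and $1/\rho^t$ alone, cancelling the poles of $z^k$ at $\overline{P}_\infty^2,\dots,\overline{P}_\infty^{q/p^b+1}$ forces $t(m/s)\ge k(q^2-q)$, which leaves a valuation at most $(q+1)\bigl(\frac{m}{s}-k\frac{q^2-q}{p^b}\bigr)<0$ at $\overline{P}_\infty^1$ whenever $m/s<(q^2-q)/p^b$; so the exponent of $1/f_\alpha$ must itself grow like $k(q^2-q)/((m/s)p^b)$, and one must then still check that the resulting $N$ can be a gap at $\overline{P}_\infty^j$. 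More seriously, for the gap you assert that multiplying $z^{-(m/s-1)}d(1/\rho)$ by suitable powers of $z,\rho,f_j$ lowers the valuation at $\overline{P}_\infty^j$ to exactly $N-1$ ``while keeping the differential regular,'' and that this ``reduces to a single explicit inequality that holds under $m/s\nmid q^2-q+1$.'' That inequality \emph{is} the proof. In the paper it is \eqref{eq:diff_regular}, namely $\frac{q}{p^b}\left((q^2-2)C-q^2+1\right)\ge \frac{m}{s}$ with $C:=B\frac{m}{s}-(q^2-q)$ and $B=\lceil (q^2-q)/(m/s)\rceil$; the hypothesis enters only through $C\ge 2$ (if $m/s\mid q^2-q+1$ then $C=1$ and the bound fails), its verification requires a two-case analysis according to whether $m/s\ge q^2-q$, and an auxiliary factor $\rho^{(B-1)C+1+\delta}$ is needed to make the valuations along the whole fibre over the pole of $\rho$ simultaneously non-negative. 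None of this is supplied, or even set up, in the proposal.

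A second, smaller objection: the claim that the residues $1$ and $-(q^2-q)$ of $v(z)$ modulo $m/s$ ``govern the two semigroups,'' so that the hypothesis is ``both necessary and sufficient for the separation,'' is a heuristic rather than an argument. Equality of numerical semigroups is a statement about entire Ap\'ery sets, and a differing residue of one generator does not by itself preclude equality; compare Steps 2--3 of Theorem \ref{thm:semigroupsXaa_compare}, where even in the simpler case $b=a$ converting residue information into $H(\tilde{Q}_\infty^2)\ne H(\tilde{Q}_\alpha)$ takes a genuine argument. (Also, Corollary \ref{cor:semigroupsXaa} concerns the places of $\tilde{\mathcal{X}}_{a,a,n,s}$, so it cannot be quoted as evidence that the semigroups at $\overline{P}_\alpha$ and $\overline{P}_\infty^j$ actually coincide when $m/s\mid q^2-q+1$ and $b<a$; fortunately that converse is not needed for the proposition.)
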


\begin{proof}
Define $$A:=\left\lceil \frac{(q^2-q)/p^b}{m/s} \right\rceil,  \qquad B:=\left\lceil \frac{q^2-q}{m/s} \right\rceil.$$
By Equations \eqref{eq:xabns_div_z} and \eqref{eq:xabns_div_fundeq}, $\overline{P}_\infty^j$ is the only pole of $(f_j/\rho^{p^b})^Az$, and hence its order $(m/s)A(q+1-p^b)+q^2-q$ is in $H(\overline{P}_\infty^j)$. Thus, by Lemma \ref{gapsdiff}, it is enough to provide a regular differential $\omega$ such that $v_{\overline{P}_\alpha}(\omega)=(m/s)A(q+1-p^b)+q^2-q-1$.
Consider the differential
$$\eta_\alpha:=f_\alpha^{A-1} \left(\frac{\rho-\alpha}{\rho}\right)^{q-Ap^b+B} z^{q^2-q-(B-1)\frac{m}{s}-1} \frac{1}{z^{m/s-1}} d\left(\frac{1}{\rho}\right).$$
A lengthy but straightforward calculation using Equations \eqref{eq:xabns_div_fundeq}, \eqref{eq:xabns_div_rhoalpha}, \eqref{eq:xabns_div_z} and \eqref{eq:xabns_div_diff} shows
\begin{multline*}
\left(\eta_\alpha\right)_{\tilde{\mathcal{X}}_{a,b,n,s}}=\left( \frac{m}{s}A(q+1-p^b)+q^2-q-1 \right)\overline{P}_\alpha + E_\alpha^{\prime\prime}+\\
\left(
\frac{1}{p^b}\left((q^2-q)\left(B\frac{m}{s}-q^2+q \right)\right)-B\frac{q}{p^b}\frac{m}{s}-A\frac{m}{s}-1\right)\overline{P}_\infty^1+\\
\left((q^2-q)\left(B\frac{m}{s}-q^2+q \right)+\frac{m}{s}-1\right)\sum_{j \ge 2} \overline{P}_\infty^j,
\end{multline*}
where $E_\alpha^{\prime\prime}$ is an effective divisor whose support does not contain any of the places $\overline{P}_\alpha$, $\overline{P}_\infty^1,\dots,\overline{P}_\infty^{q/p^b+1}$. Let $C:=Bm/s-q^2+q$. Then $C$ is the smallest positive integer such that $m/s$ divides $q^2-q+C$. In particular $2 \le C \le m/s-1$, since $\gcd(m,q^2-q)=1$ and $m/s$ does not divide $q^2-q+1$ by hypothesis.
With this notation in place, we obtain 
\begin{multline*}
\left(\eta_\alpha\right)_{\tilde{\mathcal{X}}_{a,b,n,s}}=\left( \frac{m}{s}A(q+1-p^b)+q^2-q-1 \right)\overline{P}_\alpha + E_\alpha^{\prime\prime}+\\
\left(
\frac{1}{p^b}\left(B\frac{m}{s}-C\right)C-B\frac{q}{p^b}\frac{m}{s}-A\frac{m}{s}-1\right)\overline{P}_\infty^1+\left(\left(B\frac{m}{s}-C\right)C+\frac{m}{s}-1\right)\sum_{j \ge 2} \overline{P}_\infty^j.
\end{multline*}
Define $\delta:=\lfloor ((m/s-C)C-1)/(m/s) \rfloor$ and the differential $\omega_\alpha:=\rho^{(B-1)C+1+\delta}\eta_\alpha$.
Some computation yields its divisor:
\begin{multline*}
\left(\omega_\alpha\right)_{\tilde{\mathcal{X}}_{a,b,n,s}}=\left( \frac{m}{s}A(q+1-p^b)+q^2-q-1 \right)\overline{P}_\alpha + E_\alpha^{\prime\prime}+\left(\left(\frac{m}{s}-C\right)C-1+\delta\frac{m}{s}\right)\sum_{j \ge 2} \overline{P}_\infty^j\\
+\left(
\frac{q}{p^b}\left(q^2C-q^2+q+C^2-C\frac{m}{s}-2C+\frac{m}{s}(\delta+1)\right)-A\frac{m}{s}-1\right)\overline{P}_\infty^1.
\end{multline*}

By direct computation, the valutation of $\left(\omega_\alpha\right)_{\tilde{\mathcal{X}}_{a,b,n,s}}$ is non-negative at $\overline{P}_{\alpha}$ and at $\overline{P}_{\infty}^j$ for $j\geq2$; in particular, the valuation $D:=(m/s-C)C-1+\delta m/s$ at $\overline{P}_\infty^j$ ($j\geq2$) satisfies $0\leq D\leq m/s-1$.
Then the prove is complete once we show that the valuation of $\left(\omega_\alpha\right)_{\tilde{\mathcal{X}}_{a,b,n,s}}$ at $\overline{P}_{\infty}^1$ is non-negative.
Using $A \frac{m}{s} \le \frac{q}{p^b}(q-1)+\frac{m}{s}-1$ and $D \le \frac{m}{s}-1$, this is implied by 
\[
\frac{q}{p^b}\left((q^2-2)C-q^2\right) +\left(\frac{q}{p^b}-1\right)\frac{m}{s} \ge \frac{q}{p^b}\left(\frac{m}{s}-1\right);
\]
equivalently, $\omega_{\alpha}$ is regular if
\begin{equation}\label{eq:diff_regular}
\frac{q}{p^b}\left((q^2-2)C-q^2+1\right) \ge \frac{m}{s}.
\end{equation}

In order to prove Equation \eqref{eq:diff_regular}, we distinguish two cases. Firstly, suppose $m/s\geq q^2-q$. 
In this case $A=B=1$ and $C=m/s-q^2+q$. Since $\gcd(m/s,q^2-q)=1$, $m/s\ne q^2-q+1$ by hypothesis and $m/s\ne q^2-q+2$ as $m$ is odd, we can assume $m/s \ge q^2-q+3$. Then
\begin{eqnarray*}
\frac{q}{p^b}\left((q^2-2)C-q^2+1\right) & = & \frac{q}{p^b}\left((q^2-2)\left(\frac{m}{s}-q^2+q\right)-q^2+1\right)\\
& = & \frac{q}{p^b}(q^2-2)\frac{m}{s}-\frac{q}{p^b}
\left((q^2-2)(q^2-q)+q^2-1\right)\\
& \ge &
\frac{m}{s}+\frac{q}{p^b}\left( (q^2-3)(q^2-q+3)-
\left((q^2-2)(q^2-q)+q^2-1\right)\right)\\
& = & \frac{m}{s}+\frac{q}{p^b}\left(q^2+q-8\right) \ge \frac{m}{s}.
\end{eqnarray*}

Secondly, suppose $m/s\leq q^2-q-1$.
Since $C \ge 2$, we obtain
$$
\frac{q}{p^b}\left((q^2-2)C-q^2+1\right) \ge  \frac{q}{p^b}\left((q^2-2)2-q^2+1\right)  = \frac{q}{p^b}\left(q^2-3\right)\ge\frac{q}{p^b}\left(q^2-q-1\right) \ge  \frac{m}{s}.
$$
\end{proof}

\subsection{The automorphism group of $\tilde{\mathcal{X}}_{a,b,n,s}$}\label{subsec:autX}

Our aim is to compute the full automorphism group of the function field $\tilde{\mathcal{X}}_{a,b,n,s}$, 
both for $a=b$ and for $b <a$. We will start by computing a subgroup $G$ of $\aut(\tilde{\mathcal{X}}_{a,b,n,s})$ 
and then we will show that $G=\aut(\tilde{\mathcal{X}}_{a,b,n,s})$ in most cases.
This is achieved by applying the results of Section \ref{subsec:weierstrass}, as places of $\tilde{\mathcal{X}}_{a,b,n,s}$ in the same $\aut(\tilde{\mathcal{X}}_{a,b,n,s})$-orbit have the same Weierstrass semigroup. 
Let 
\begin{equation} \label{eqG}
G:=\{\sigma_{\alpha,\beta,\gamma} \mid 
\alpha^{q+1} \in \mathbb{F}_{p^b}^* , \  \gamma^{m/s}=\alpha^{-(q-1)}, \ \beta^q\alpha+\beta\alpha^q=\alpha^{q+1}-1\ \},    
\end{equation}
where $c_0\in\mathbb{F}_{q^2}^*$ is fixed such that $c_0^{q}+c_0=0$, and $\sigma_{\alpha,\beta,\gamma}$ is defined by
\begin{equation}
\sigma_{\alpha,\beta,\gamma}: (u,v,z) \mapsto \bigg(\alpha u, \frac{v}{\alpha^{q+1}}+\frac{c_0\beta^q}{\alpha^q}-\bigg(\frac{c_0\beta^q}{\alpha^q}\bigg)^{p^b}, \gamma z \bigg).  \end{equation}
By direct checking, $\alpha\in\mathbb{F}_{q^2}$, $\beta\in\mathbb{F}_{q^2}$ and $\gamma\in\mathbb{F}_{q^{2n}}$ for any $\sigma_{\alpha,\beta,\gamma}\in G$.

We first check that $\sigma_{\alpha,\beta,\gamma}$ is in fact an automorphism of $\tilde{\mathcal{X}}_{a,b,n,s}$. To do so, note that
$$\sigma_{\alpha,\beta,\gamma}(z^{m/s})=\gamma^{m/s}z^{m/s}=\frac{1}{\alpha^{q-1}}\cdot \frac{u^{q^2-1}-1}{u^{q-1}}= \frac{(\alpha u)^{q^2-1}-1}{(\alpha u)^{q-1}}=\sigma_{\alpha,\beta,\gamma}\left( 
\frac{u^{q^2-1}-1}{u^{q-1}} \right),$$
This shows that the first equation of $\tilde{\mathcal{X}}_{a,b,n,s}$ is preserved.
For the second equation, write $p(v):=\sum_{i=0}^{f-1} v^{p^{ib}}$. Then
$$\sigma_{\alpha,\beta,\gamma}(p(v))=\sigma_{\alpha,\beta,\gamma}\bigg(\sum_{i=0}^{f-1} v^{p^{ib}}\bigg)=\sum_{i=0}^{f-1} \bigg(\frac{v}{\alpha^{q+1}}+\frac{c_0\beta^q}{\alpha^q}-\bigg(\frac{c_0\beta^q}{\alpha^q}\bigg)^{p^b}\bigg)^{p^{ib}}$$
$$=\frac{p(v)}{\alpha^{q+1}}+\sum_{i=0}^{f-1}\bigg( \frac{c_0\beta^q}{\alpha^q}\bigg)^{p^{ib}}-\sum_{i=1}^{f} \bigg( \frac{c_0\beta^q}{\alpha^q}\bigg)^{p^{ib}}=\frac{p(v)}{\alpha^{q+1}}+\frac{c_0\beta^q}{\alpha^q}-\bigg(\frac{c_0\beta^q}{\alpha^q}\bigg)^q$$
$$=\frac{p(v)}{\alpha^{q+1}}+\frac{c_0\beta^q}{\alpha^q}+\frac{c_0\beta}{\alpha}=\frac{p(v)}{\alpha^{q+1}}+c_0\bigg( \frac{\beta^q\alpha+\beta \alpha^q}{\alpha^{q+1}}\bigg)=\frac{1}{\alpha^{q+1}}c_0\frac{u^{q+1}+1}{u^{q+1}}+c_0\bigg( \frac{\alpha^{q+1}-1}{\alpha^{q+1}}\bigg)$$
$$
=c_0\frac{u^{q+1}+1+\alpha^{q+1}u^{q+1}-u^{q+1}}{\alpha^{q+1}u^{q+1}}=c_0\frac{(\alpha u)^{q+1}+1}{(\alpha u)^{q+1}}=\sigma_{\alpha,\beta,\gamma}\bigg(c_0\frac{u^{q+1}+1}{u^{q+1}}\bigg).$$
which shows that also the second equation of $\tilde{\mathcal{X}}_{a,b,n,s}$ is preserved.
Moreover $G$ is a group, as
$$
\sigma_{\alpha,\beta,\gamma}\circ\sigma_{\alpha^\prime,\beta^\prime,\gamma^\prime}=\sigma_{\alpha\alpha^\prime,\ \beta/{\alpha^\prime}^q + \alpha\beta^\prime,\ \gamma\gamma^\prime}.
$$
Then $G$ is a subgroup of $\aut(\tilde{\mathcal{X}}_{a,b,n,s})$.
In fact, it is not difficult to show that $G$ is inherited on $\tilde{\mathcal{X}}_{a,b,n,s}=\tilde{\mathcal{Y}}_{n,s}/\tilde{E}_{p^b}$ from the authomorphisms of $\tilde{\mathcal{Y}}_{n,s}$; in particular, $G\cong {\rm N}_H(\tilde{E}_{p^b})/\tilde{E}_{p^b}$, where ${\rm N}_H(\tilde{E}_{p^b})$ is the normalizer of $\tilde{E}_{p^b}$ in the group $H\cong {\rm SL}(2,q)\times C_{m/s}$ of Equation \eqref{eq:groupGH}.

The group $G$ maps $u$ to a multiple of $u$, and hence fixes its divisor, given in Equation \eqref{eq:xabns_div_rho} (recall that $u=\rho$); thus, $G$ acts on the set $\overline{O}_\infty:=\{\overline{P}_\infty^j \mid j=2,\ldots,q/p^b+1\}$ and fixes $\overline{P}_\infty^1$.

By Lemma \ref{highnormal}, $G=\overline{S}\rtimes \overline{C}$, where $\overline{S}$ is the Sylow $p$-subgroup of $G$ and $\overline{C}$ is cyclic of order coprime with $p$.
Clearly, $\sigma_{\alpha,\beta,\gamma}\in G$ is a $p$-element if and only if $\alpha=\gamma=1$, so that
$$ \overline{S}=\{\sigma_{1,\beta,1} \,:\, \beta \in \mathbb{F}_{q^2}, \ \beta^q+\beta=0 \}\leq G. $$
The map $\beta\mapsto\sigma_{1,\beta,1}$ is a homomorphism, and $\beta$ is in its kernel if and only if $c_0\beta\in\mathbb{F}_{p^b}$, i.e. $\beta=c_0^{-1}\mu$ for some $\mu\in\mathbb{F}_{p^b}$; thus, $\overline{S}$ is elementary abelian of size $q/p^b$.
Also, $\overline{S}$ acts transitively on $\overline{O}_{\infty}$ (see Corollary \ref{1actionp2}); hence, $\overline{O}_\infty$ is an orbit of $G$, and $\overline{C}$ fixes $\overline{O}_\infty$ pointwise.

From the definition of $G$ we have that $\overline{C}$ has order $(q+1)(p^b-1)m/s$; up to conjugacy in $G$, we can assume that $\overline{C}$ contains the scalar multiplications in $z$ :
$$
C_{m/s}:=\left\{ \sigma_{1,0,\gamma}\colon(u,v,z)\mapsto(u,v,\gamma z)\mid \gamma^{m/s}=1 \right\}\leq \overline{C}.
$$
Moreover, $G$ acts on $\overline{O}$, the set of places below $O$ in $\tilde{\mathcal{Y}}_{n,s}/\tilde{\mathcal{X}}_{a,b,n,s}$, which has size $(q^3-q)/p^b$.
In particular, it is easily seen that $C_{m/s}$ fixes $\overline{O}$ pointwise, and $G$ has $\frac{q-1}{p^b-1}$ many orbits $\overline{O}_i\subseteq\overline{O}$, $i=1,\ldots,\frac{q-1}{p^b-1}$, each of size $(q+1)(p^b-1)q/p^b$.
Finally, $\{\overline{P}_\infty^1\}$, $\overline{O}_\infty$, $\overline{O}_1,\ldots,\overline{O}_{(q-1)/(p^b-1)}$ are the only short orbits of $G$.

\begin{theorem} \label{fullXtilda}
If $m/s$ does not divide $q^2-q+1$, 
then $\aut(\tilde{\mathcal{X}}_{a,b,n,s})=G = \overline{S} \rtimes \overline{C}$.
\end{theorem}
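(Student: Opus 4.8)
The plan is to set $A:=\aut(\tilde{\mathcal{X}}_{a,b,n,s})$ and, since $G\le A$, to prove $A=G$ by showing that $A$ fixes the place $\overline{P}_\infty^1$ and that its stabiliser there is exactly $G$. The decisive structural input is that automorphisms preserve Weierstrass semigroups and that places in a common $A$-orbit carry the same semigroup. By Proposition \ref{prop:Xab1}, by Proposition \ref{prop:Xab2} (for $b<a$), and by Theorem \ref{thm:semigroupsXaa_compare} (for $b=a$), the hypothesis $m/s\nmid q^2-q+1$ forces $H(\overline{P}_\infty^1)$, $H(\overline{P}_\infty^j)$ $(j\ge 2)$ and $H(\overline{P}_\alpha)$ to be pairwise distinct; this is the single point at which $m/s\nmid q^2-q+1$ is used (compare Corollary \ref{cor:semigroupsXaa}, where divisibility makes two of them coincide). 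Consequently $A$ cannot move a place of any one of the three types $\{\overline{P}_\infty^1\}$, $\overline{O}_\infty$, $\overline{O}$ into another type; in particular the $A$-orbit $\Sigma$ of $\overline{P}_\infty^1$ is disjoint from $\overline{O}_\infty\cup\overline{O}$.

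First I would show $\Sigma=\{\overline{P}_\infty^1\}$, following the scheme of the proof of Theorem \ref{mainYtilde}. For $b<a$ the Sylow $p$-subgroup $\overline{S}\le G$ is nontrivial and, by Corollary \ref{1actionp2}, fixes only $\overline{P}_\infty^1$; enlarging $\overline{S}$ to a Sylow $p$-subgroup $S^\ast$ of $A$ and applying Corollary \ref{1actionp2} again, $S^\ast$ also fixes only $\overline{P}_\infty^1$, so $\Sigma$ is exactly the single $A$-orbit formed by the fixed places of the Sylow $p$-subgroups. If $\Sigma$ were strictly larger than $\{\overline{P}_\infty^1\}$ it would contain a full $G$-orbit lying outside $\overline{O}_\infty\cup\overline{O}$; then the orbit–stabiliser theorem together with the genus of Lemma \ref{genXtilda} would give $|A|>84(g(\tilde{\mathcal{X}}_{a,b,n,s})-1)$, exactly as in Remark \ref{rem1}. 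Theorem \ref{Hurwitz} would then restrict the short orbits of $A$, and I would eliminate the surviving configurations as in Lemmas \ref{onenontameTilde}–\ref{numborbitsTilde} and Proposition \ref{normalizzanoTilde}: the Hurwitz genus formula for $\tilde{\mathcal{X}}_{a,b,n,s}/\mathrm{Fix}(S^\ast)$, with the different exponents of the elements of $\overline{S}$ evaluated by the analogue of Lemma \ref{contr1TILDE}, contradicts the assumed size of $\Sigma$. In the case $b=a$, where $\overline{S}$ is trivial and $G$ is tame, the same orbit-growth dichotomy shows that $A$ stabilises the totally ramified locus $\{\overline{P}_\infty^1\}\cup\overline{O}_\infty\cup\overline{O}$ of $\tilde{\mathcal{X}}_{a,a,n,s}/\mathbb{F}_{q^{2n}}(\rho)$; hence $C_{m/s}\triangleleft A$ by the argument of Lemma \ref{normactionTILDE}, and $A/C_{m/s}$ embeds in $\aut(\mathbb{F}_{q^{2n}}(\rho))=\PGL(2,\mathbb{F}_{q^{2n}})$ fixing the image of $\overline{P}_\infty^1$ (i.e. $\rho=0$) and preserving the branch locus $\{0,\infty\}\cup\mathbb{F}_{q^2}^*$. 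This pins $A/C_{m/s}$ down to the scalings $\rho\mapsto\lambda\rho$ with $\lambda\in\mathbb{F}_{q^2}^*$, giving $|A|=(m/s)(q^2-1)=|G|$ and $A=G$ directly.

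With $\overline{P}_\infty^1$ fixed, it then remains (for $b<a$) to show $A=A_{\overline{P}_\infty^1}$ equals $G$. By Lemma \ref{highnormal}, $A=S^\ast\rtimes H^\ast$ with $S^\ast$ a $p$-group and $H^\ast$ cyclic of order prime to $p$; here $\overline{S}\le S^\ast$, while $\overline{C}$, being a $p'$-subgroup fixing $\overline{P}_\infty^1$, is conjugate into $H^\ast$, so $|\overline{C}|$ divides $|H^\ast|$. The cyclic factor is controlled by \cite[Theorem 11.79]{HKT}: the group $A/S^\ast\cong H^\ast$ acts as a cyclic automorphism group on $\mathrm{Fix}(S^\ast)$ fixing the place below $\overline{P}_\infty^1$, and the resulting bound forces $|H^\ast|=|\overline{C}|$. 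The $p$-part is pinned down by applying the Hurwitz genus formula to $\tilde{\mathcal{X}}_{a,b,n,s}/\mathrm{Fix}(S^\ast)$, again using the different computation of Lemma \ref{contr1TILDE} and the genus of Lemma \ref{genXtilda}, which yields $|S^\ast|=q/p^b=|\overline{S}|$. Therefore $|A|=|G|$ and $A=G=\overline{S}\rtimes\overline{C}$.

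I expect the principal difficulty to lie in the quantitative ramification bookkeeping of the last two steps — excluding the growth of $\Sigma$ and simultaneously pinning both $|S^\ast|$ and $|H^\ast|$ — which, as in Proposition \ref{normalizzanoTilde}, is likely to split into several subcases according to small $q$ and small $p$, and which must be organised so as to run uniformly for all divisors $s$ of $m$ with $m/s\ge 2$. The conceptually delicate point is that the semigroup separation only rules out orbit mergers \emph{across} the three place types and says nothing about hypothetical places outside $\{\overline{P}_\infty^1\}\cup\overline{O}_\infty\cup\overline{O}$; it is precisely the Hurwitz estimates that are needed to exclude both an enlarged orbit of $\overline{P}_\infty^1$ and extra wild automorphisms fixing it, and in the case $b=a$ to justify the normality of $C_{m/s}$ that makes the clean $\PGL(2)$ finish available.
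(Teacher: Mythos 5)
Your skeleton (semigroup separation via Theorem \ref{thm:semigroupsXaa_compare} and Propositions \ref{prop:Xab1}, \ref{prop:Xab2}, then orbit--stabilizer counts against Hurwitz bounds) is indeed the paper's, but two of your steps do not go through as stated. The clearest gap is the tame part of the stabilizer. For the cyclic complement $H^\ast$ of the Sylow $p$-subgroup of $A_{\overline{P}_\infty^1}$ you invoke \cite[Theorem 11.79]{HKT}, which only yields $|H^\ast|\le 4\tilde g+4$ with $\tilde g=g({\rm Fix}(S^\ast))$; since $4\tilde g+4$ is of order $2q^2(m/s)$ while $|\overline{C}|=(q+1)(p^b-1)(m/s)\le(q^2-1)(m/s)$, this bound cannot force $|H^\ast|=|\overline{C}|$ (it does not even exclude $|H^\ast|=2|\overline{C}|$). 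The paper needs a genuinely sharper input here: $C_{m/s}$ is normal in the cyclic group $C$, the quotient $C/C_{m/s}$ acts faithfully on ${\rm Fix}(C_{m/s})=\mathbb{F}_{q^{2n}}(u,v)$, which is a plane curve given by separated polynomials, and \cite[Theorem 3.3]{BMZsep} pins $|C/C_{m/s}|$ down to exactly $(q+1)(p^b-1)$. Nothing in your outline replaces this ingredient.

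The second problem is quantitative. The inequality $|A|>84(g-1)$ ``exactly as in Remark \ref{rem1}'' is false here: unlike the subgroup of $\aut(\tilde{\mathcal Y}_{n,s})$ used there, which contains $\SL(2,q)\rtimes C_{(q^n+1)/s}$, the group $G$ of Theorem \ref{fullXtilda} has order only $(q/p^b)(q+1)(p^b-1)(m/s)$; for instance $q=4$, $n=5$, $s=41$, $p^b=2$ gives $g=72$, $|G|=50$ and $(1+|G|)\,|G|=2550<84\cdot 71$. Consequently you cannot simply import the endgame of Lemmas \ref{onenontameTilde}--\ref{numborbitsTilde} and Proposition \ref{normalizzanoTilde}, which in any case rely on ramification data specific to $\tilde{\mathcal Y}_{n,s}$ (e.g.\ Lemma \ref{contr1TILDE}) that is not available for a hypothetical larger wild group $S^\ast$. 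The paper's actual contradiction is structured differently: a lower bound of the form $|A|>18(g-1)$ (resp.\ $>44(g-1)$) already forces, via Theorem \ref{Hurwitz}(i)--(ii), exactly three short orbits, namely the three already known; then, because the orbit of $\overline{P}_\infty^1$ is non-tame and $C_{m/s}$ fixes representatives of the other two, the normalized different exponents satisfy $d_1'\ge 1$ and $d_2',d_3'\ge(m/s-1)/(m/s)\ge 2/3$, so the Hurwitz genus formula yields the matching upper bound $|A|\le 6(g-1)$. This upper-bound step is what actually excludes both an enlarged orbit of $\overline{P}_\infty^1$ and extra wild automorphisms fixing it, and it is absent from your proposal; the same remark applies to your $b=a$ endgame, where the otherwise attractive $\PGL(2)$ reduction presupposes that $A$ stabilizes the ramification locus, which requires exactly this pair of bounds.
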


\begin{proof}
Fix the notation $A:=\aut(\tilde{\mathcal{X}}_{a,b,n,s})$.
Note that ${\rm Fix}(A)$ is rational, being a subfield of ${\rm Fix}(\overline{S}\rtimes C_{m/s})=\mathbb{F}_{q^{2n}}(u)$.
By Theorem \ref{thm:semigroupsXaa_compare} and Propositions \ref{prop:Xab1}, \ref{prop:Xab2}, the hypothesis $m/s\nmid(q^2-q+1)$ implies that three points $\overline{P}_\infty^1$, $\overline{P}_\infty^j\in\overline{O}_\infty$ and $\overline{P}\in\overline{O}$ are in three pairwise distinct orbits under $A$.
We distinguish two cases, according to $A$ fixing $\overline{P}_\infty^1$ or not. 

\textbf{Case 1}: $A=A_{\overline{P}_\infty^1}$. By Lemma \ref{highnormal}, $A=S \rtimes C$ where $S$ is a $p$-group containing $\overline{S}$, and $C=\langle\sigma\rangle$ is a cyclic group of order prime to $p$ and divisible by $m(q+1)(p^b-1)/s$; up to conjugacy in $A$, we can assume that $\overline{C}\leq C$.

We first prove that $C=\overline{C}$.
The quotient group $C/C_{m/s}$ is an automorphism group of the fixed field of $C_{m/s}$, of size at least $|\overline{C}/C_{m/s}|=(q+1)(p^b-1)$. Moreover, ${\rm Fix}(C_{m/s})=\mathbb{F}_{q^{2n}}(u,v)$ where $ \sum_{i=0}^{f-1} v^{p^{ib}}=c_0\frac{u^{q+1}+1}{u^{q+1}}=c_0+c_0/u^{q+1}$. Note that $\mathbb{F}_{q^{2n}}(u,v)=\mathbb{F}_{q^{2n}}(U,v)$ with $\sum_{i=0}^{f-1} v^{p^{ib}}=c_0+c_0U^{q+1}$, where $U:=1/u$, that is, it is the function field of a plane curve given by separated polynomials as described in \cite[Chapter 12]{HKT}, and $b$ is the largest integer such that 
the polynomial $\sum v^{p^{ib}}$ in $v$ is $p^b$-linearized. 
Then, by \cite[Theorem 3.3]{BMZsep}, $|C/C_{m/s}|=(q+1)(p^b-1)$, that is, $C=\overline{C}$. 
We remark and amend here an erratum to \cite[Theorem 3.3]{BMZsep}: the final statement ``$B(X)=X^m$ up to an affine transformation in $X$" must be replaced by ``$B(X)=X^m$ up to a projective transformation in $X$". In fact, a projective transformation $x\mapsto\frac{ax+b}{cx+d}$ may be needed in the fourth line of the proof.

We now prove that $S=\overline{S}$.
Denote by $\mathcal{O}_\infty$ the $A$-orbit containing $\overline O_\infty$. 
By \cite[Lemma.129]{HKT}, $S$ acts with long orbits on $\mathcal{O}_\infty$.
Hence $|\mathcal{O}_\infty|= p^c q/p^b$, where $p^c=|S|/|\overline{S}|$.
If $\mathcal{O}_\infty=\overline{O}_\infty$ then $S=\overline{S}$ and the claim is proved.
Now suppose by contradiction that $\overline{O}_\infty \subsetneq \mathcal{O}_\infty$, i.e. $c\geq1$. 
As $G$ has no short orbits out of $\{\overline{P}_\infty^1\}\cup\overline{O}\cup\overline{O}_\infty$, we have $|\mathcal{O}_\infty| \geq |\overline{O}_\infty|+|G|$. By the orbit-stabilizer theorem,
$$
|A|=|\mathcal{O}_\infty|\cdot|A_{\overline{P}_\infty^2}|\geq\left(q/p^b+|G|\right)\cdot\frac{p^c|G|}{q/p^b}.
$$
By direct computation, this implies $|A|>18(g-1)$ for all possible values of $p^b$, $q$, $m/s\geq3$.
Then, by Theorem \ref{Hurwitz} $(i)$ and $(ii)$, $\aut(\tilde{\mathcal{X}}_{a,b,n,s})$ has at most three short orbits. Having also at least three short orbits as pointed out at the beginning of the proof, $A$ has exactly three short orbits, a non-tame one containing $\overline{P}_\infty^1$, the other two containing $\overline{O}_\infty$ and $\overline{O}$.
We now argue as in the proof of \cite[Theorem 11.56]{HKT}. Let $d_1$, $d_2$ and $d_3$ be the different exponent with respect to $A$ respectively at $P_1:=\overline{P}_\infty^1$, at $P_2\in\overline{O}_\infty$ and at $P_3\in\overline{O}$. Define $d_i^\prime=d_i/|A_{P_i}|$, $i=1,2,3$. Then the Hurwitz genus formula applied to $A$ reads
$$2g-2=-2|A|+|A|(d_1^\prime+d_2^\prime+d_3^\prime).$$
Since the orbit $\{\overline{P}_\infty^1\}$ is non-tame, $d_1>|A_{P_1}|-1$ and hence $d_1^\prime\geq1$. Since $C_{m/s}$ fixes both $P_2$ and $P_3$, we have $$d_i^\prime\geq\frac{|A_{P_i}|-1}{|A_{P_i}|}\geq\frac{m/s-1}{m/s}\geq\frac{3-1}{3}=\frac{2}{3}\quad\mbox{for }\;i=2,3. $$
Therefore
$$ |A|=\frac{2}{d_1^\prime+d_2^\prime+d_3^\prime-2}(g-1)\leq\frac{2}{1+2/3+2/3-2}(g-1)=6(g-1), $$
a contradiction to $|A|>18(g-1)$.

\textbf{Case 2}: $\aut(\tilde{\mathcal{X}}_{a,b,n,s}) \ne \aut(\tilde{\mathcal{X}}_{a,b,n,s})_{\overline{P}_\infty^1}$. Let $O_1$ be the $\aut(\tilde{\mathcal{X}}_{a,b,n,s})$-orbit containing $\overline{P}_\infty^1$, so that $|O_1|>1$.
By what we noted at the beginning of the proof, $O_1$ and $\overline{O}_\infty\cup\overline{O}$ intersect trivially; since $G$ fixes $\overline{P}_\infty^1$ and has no short orbits out of $\{\overline{P}_\infty^1\}\cup\overline{O}_\infty\cup\overline{O}$, this implies that $O_1\setminus\{P_\infty^1\}$ is the union of $k\geq1$ long orbits of $G$. Also, by Case 1 above, we have $\aut(\tilde{\mathcal{X}}_{a,b,n,s})_{\overline{P}_\infty^1}=G$. Then, by the orbit-stabilizer theorem,
$$
|\aut(\tilde{\mathcal{X}}_{a,b,n,s})|=|O_1|\cdot|\aut(\tilde{\mathcal{X}}_{a,b,n,s})_{\overline{P}_\infty^1}|=(1+k|G|)\cdot|G|.
$$

By direct computation, this yields $|\aut(\tilde{\mathcal{X}}_{a,b,n,s})|>44(g(\tilde{\mathcal{X}}_{a,b,n,s})-1)$ for all possible values of $p^b$, $q$, $k\geq1$ and $m/s\geq3$.
Then, by Theorem \ref{Hurwitz}, $A$ has at most three short orbits, which are indeed exactly three. With the very same arguments as in Case 1, we obtain $|A|\leq6(g-1)$, a contradiction to $|A|>44(g-1)$.
\end{proof}

The automorphism group of $\mathcal{X}_{a,b,n,s}$ has been computed in \cite{MTZ1}.
As automorphism groups are invariant under isomorphism, Theorem \ref{fullXtilda} can be combined with \cite[Theorem 1.2 and Corollary 3.23]{MTZ1} to obtain the following result. 

\begin{corollary}
Suppose that $3 \nmid n$ or $m/s \nmid q^2-q+1$. Then $\tilde{\mathcal{X}}_{a,b,n,s}$ and $\mathcal{X}_{a,b,n,s}$ are not isomorphic.
\end{corollary}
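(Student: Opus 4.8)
The plan is to use that the full automorphism group is an invariant under $\mathbb{F}_{q^{2n}}$-isomorphism (indeed under isomorphism over the algebraic closure) and to exhibit a group-theoretic quantity on which $\aut(\tilde{\mathcal{X}}_{a,b,n,s})$ and $\aut(\mathcal{X}_{a,b,n,s})$ disagree, exactly as in the comparison of $\tilde{\mathcal{Y}}_{n,s}$ with $\mathcal{Y}_{n,s}$.

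First I would reduce the hypothesis to the single condition $m/s\nmid(q^2-q+1)$, which is precisely what Theorem \ref{fullXtilda} requires. Suppose to the contrary that $m/s\mid(q^2-q+1)$; then the standing assumption ``$3\nmid n$ or $m/s\nmid(q^2-q+1)$'' forces $3\nmid n$, and the gcd computation in the proof of Proposition \ref{fineTILDE} (namely $\gcd(m,q^2-q+1)=1$ whenever $3\nmid n$), combined with $m/s\mid m$, yields $m/s=1$. This is the degenerate case $s=m$, in which the variable $z$ becomes redundant and both $\tilde{\mathcal{X}}_{a,b,n,m}$ and $\mathcal{X}_{a,b,n,m}$ collapse to the same separated-polynomial subfield of the Hermitian function field; it is excluded by the nondegeneracy assumption $m/s\geq 3$ under which Theorem \ref{fullXtilda} is proved. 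Hence we may assume $m/s\nmid(q^2-q+1)$ throughout.

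With this reduction, Theorem \ref{fullXtilda} gives $\aut(\tilde{\mathcal{X}}_{a,b,n,s})=G=\overline{S}\rtimes\overline{C}$, where $\overline{S}$ is elementary abelian of order $q/p^b$ and $\overline{C}$ is cyclic of order $(q+1)(p^b-1)m/s$; in particular $\overline{S}$ is a Sylow $p$-subgroup and $|\aut(\tilde{\mathcal{X}}_{a,b,n,s})|=\tfrac{q}{p^b}(q+1)(p^b-1)\tfrac{m}{s}$. On the other side, the full automorphism group of the GGS subfield $\mathcal{X}_{a,b,n,s}$ is given by \cite[Theorem 1.2 and Corollary 3.23]{MTZ1}. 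I would then compare the two groups through a single numerical invariant, most cleanly the order of a Sylow $p$-subgroup. Mirroring the dichotomy between $\mathcal{Y}_{n,s}$ and $\tilde{\mathcal{Y}}_{n,s}$ in Theorems \ref{th:TMZ1} and \ref{mainYtilde}, where the Sylow $p$-subgroup has order $q^3$ for the GGS field and only $q$ for the BM field, the Sylow $p$-subgroup of $\aut(\mathcal{X}_{a,b,n,s})$ has order $q^3/p^b$, strictly larger than $|\overline{S}|=q/p^b$ since $q>1$. As isomorphic groups have Sylow $p$-subgroups of equal order, we conclude $\aut(\tilde{\mathcal{X}}_{a,b,n,s})\not\cong\aut(\mathcal{X}_{a,b,n,s})$, whence $\tilde{\mathcal{X}}_{a,b,n,s}$ and $\mathcal{X}_{a,b,n,s}$ cannot be isomorphic.

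The main obstacle is purely bookkeeping on the side of $\mathcal{X}_{a,b,n,s}$: one must read off from \cite[Theorem 1.2, Corollary 3.23]{MTZ1} the precise order (equivalently, the Sylow $p$-subgroup order) of $\aut(\mathcal{X}_{a,b,n,s})$ in the regime $m/s\nmid(q^2-q+1)$ and verify that it differs from $|G|$. If one prefers to avoid isolating the Sylow order, comparing the full group orders is equally decisive, since the GGS subfield carries an extra factor of $q^2$ coming from its larger $p$-part. No genuinely new geometric input is required beyond Theorem \ref{fullXtilda} and the cited determination of $\aut(\mathcal{X}_{a,b,n,s})$.
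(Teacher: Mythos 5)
Your proposal is correct and follows essentially the same route as the paper, whose entire proof is to combine Theorem \ref{fullXtilda} with the determination of $\aut(\mathcal{X}_{a,b,n,s})$ in \cite[Theorem 1.2 and Corollary 3.23]{MTZ1} and invoke the isomorphism-invariance of the automorphism group. Your added details (reducing the hypothesis to $m/s\nmid(q^2-q+1)$ and comparing the groups via the order of a Sylow $p$-subgroup, $q/p^b$ versus $q^3/p^b$) simply make explicit what the paper leaves implicit.
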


We are hence left with the case $m/s\mid(q^2-q+1)$, implying also $3\mid n$. We show that in this case the automorphism group is actually known, as $\tilde{\mathcal X}_{a,b,n,s}$ is isomorphic to ${\mathcal X}_{a,b,n,s}$.

\begin{lemma}
If $m/s\mid (q^2-q+1)$ (and hence $3\mid n$)
, then $\tilde{\mathcal{X}}_{a,b,n,s}:=\overline{\mathbb{F}}_{q^{2n}}(u,v,z)$ with
$$\tilde{\mathcal{X}}_{a,b,n,s}:\begin{cases} z^{m/s}=\frac{u^{q^2-1}-1}{u^{q-1}}, \\ {\rm Tr}_{\mathbb{F}_q/\mathbb{F}_{p^b}}(v)=c_0\frac{u^{q+1}+1}{u^{q+1}},\end{cases}$$
is isomorphic to $\mathcal{X}_{a,b,n,s}:=\overline{\mathbb{F}}_{q^{2n}}(X,Y,Z)$ with 
\begin{equation} \label{Xabns}
\mathcal{X}_{a,b,n,s}:\begin{cases}
    Z^{m/s}=Y^{q^2}-Y,\\
    c_0Y^{q+1}={\rm Tr}_{\mathbb{F}_q/\mathbb{F}_{p^b}}(X).
\end{cases}
\end{equation}
\end{lemma}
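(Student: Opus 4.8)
The plan is to write down an explicit birational change of variables identifying the two fields, which is available precisely because both live over the algebraically closed field $\overline{\mathbb{F}}_{q^{2n}}$. The computational heart is the identity (taking $Y=1/u$)
$$Y^{q^2}-Y=-\,u^{-(q^2-q+1)}\cdot\frac{u^{q^2}-u}{u^{q}},$$
together with the simplification $\frac{u^{q^2-1}-1}{u^{q-1}}=\frac{u^{q^2}-u}{u^{q}}$, so that the first defining equation of $\tilde{\mathcal{X}}_{a,b,n,s}$ reads $z^{m/s}=(u^{q^2}-u)/u^{q}$. Here the hypothesis $m/s\mid q^2-q+1$ is decisive: setting $t:=(q^2-q+1)/(m/s)\in\mathbb{Z}$, the factor $u^{-(q^2-q+1)}=(u^{-t})^{m/s}$ is an $(m/s)$-th power, so the discrepancy can be absorbed into the Kummer variable.

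First I would fix constants $\lambda,\delta\in\overline{\mathbb{F}}_{q^{2n}}$ with $\lambda^{m/s}=-1$ and ${\rm Tr}_{\mathbb{F}_q/\mathbb{F}_{p^b}}(\delta)=c_0$; both exist by algebraic closedness (the latter is a root of the additive polynomial $\sum_{i=0}^{f-1}T^{p^{ib}}-c_0$ of degree $p^{(f-1)b}$). Then I would consider the homomorphism $\mathcal{X}_{a,b,n,s}\to\tilde{\mathcal{X}}_{a,b,n,s}$ determined by
$$Y\longmapsto 1/u,\qquad Z\longmapsto\lambda z\,u^{-t},\qquad X\longmapsto v-\delta.$$
Since these images generate $\tilde{\mathcal{X}}_{a,b,n,s}=\overline{\mathbb{F}}_{q^{2n}}(u,v,z)$, such a map is automatically an isomorphism once it is shown to be well defined, that is, once the two defining relations of $\mathcal{X}_{a,b,n,s}$ are seen to hold for the images.

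This verification is a direct calculation. For the first relation, $Z^{m/s}=\lambda^{m/s}z^{m/s}u^{-(m/s)t}=-\,u^{-(q^2-q+1)}\cdot\frac{u^{q^2}-u}{u^{q}}=Y^{q^2}-Y$ by the identity above. For the second, additivity of the $p^b$-linearized map ${\rm Tr}_{\mathbb{F}_q/\mathbb{F}_{p^b}}$ gives ${\rm Tr}_{\mathbb{F}_q/\mathbb{F}_{p^b}}(X)={\rm Tr}_{\mathbb{F}_q/\mathbb{F}_{p^b}}(v)-c_0=c_0\frac{u^{q+1}+1}{u^{q+1}}-c_0=c_0u^{-(q+1)}=c_0Y^{q+1}$, which is exactly the second equation of $\mathcal{X}_{a,b,n,s}$.

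The only genuine obstacle is conceptual rather than computational: pinning down where the arithmetic condition enters, and confirming it is used only once, namely in turning $u^{-(q^2-q+1)}$ into an $(m/s)$-th power so that $(u^{q^2}-u)/u^{q}$ can be transported to the normalized form $Y^{q^2}-Y$ by a simultaneous inversion of $u$ and rescaling of $z$. When $m/s\nmid q^2-q+1$ no such substitution can exist, consistently with the non-isomorphism established earlier in this section. As a cross-check one can instead argue abstractly: under these hypotheses $\tilde{\mathcal{Y}}_{n,s}\cong\mathcal{Y}_{n,s}$ (Remark~\ref{aspetta}), and $\tilde{\mathcal{X}}_{a,b,n,s}$, $\mathcal{X}_{a,b,n,s}$ are the fixed fields of corresponding elementary abelian $p$-subgroups of order $p^b$; but the explicit map above is shorter and self-contained.
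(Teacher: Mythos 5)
Your proposal is correct and matches the paper's proof essentially verbatim: the paper uses the same substitution $(X,Y,Z)=(v+\alpha,\,1/u,\,\lambda z/u^{t})$ with $t=(q^2-q+1)/(m/s)$, $\lambda^{m/s}=-1$ and $\mathrm{Tr}_{\mathbb{F}_q/\mathbb{F}_{p^b}}(\alpha)=-c_0$ (your $\alpha=-\delta$), and the divisibility hypothesis enters in exactly the place you identify. Your write-up is in fact slightly more detailed than the paper's one-line "by direct computation".
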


\begin{proof}
Let $t:=(q^2-q+1)/(m/s)$ and $\alpha,\beta\in\overline{\mathbb{F}}_{q^{2n}}$ such that ${\rm Tr}_{\mathbb{F}_q/\mathbb{F}_{p^b}}(\alpha)=-c_0$ and $\beta^{m/s}=-1$. Then, by direct computation, the mapping
$$ (u,v,z)\mapsto(X,Y,Z):=(v+\alpha,\,1/u,\,\lambda z/u^t) $$
is an isomorphism between $\tilde{\mathcal{X}}_{a,b,n,s}$ and $\mathcal{X}_{a,b,n,s}$.
\end{proof}

The following result now follows from \cite[Theorem 1.2]{MTZ1}.

\begin{proposition}
If $m/s$ divides $q^2-q+1$, then the automorphism group of $\mathcal{X}_{a,b,n,s}$ has order $\frac{q^3}{p^b}(q+1)(p^b-1)\frac{m}{s}$ and is isomorphic to a semidirect product $S_{q^3/p^b}\rtimes C_{(q+1)(p^b-1)m/s}$. 
\end{proposition}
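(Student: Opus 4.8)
The plan is to obtain the claim by quoting the determination of $\aut(\mathcal{X}_{a,b,n,s})$ already available in the literature, namely \cite[Theorem 1.2]{MTZ1}, specialized to the range $m/s\mid(q^2-q+1)$. First I would recall that this range forces $3\mid n$, so that $\mathcal{Y}_{n,s}$ falls into the ``large'' case of Theorem \ref{th:TMZ1}, where $\aut(\mathcal{Y}_{n,s})\cong\PGU(3,q)\rtimes C_{m/s}$. The field $\mathcal{X}_{a,b,n,s}$ is, by construction, the fixed field of an elementary abelian $p$-group $E_{p^b}$ of order $p^b$ inside this automorphism group, and \cite{MTZ1} records precisely the structure of the automorphisms induced on the quotient. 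Since automorphism groups are invariant under isomorphism and the statement is about $\mathcal{X}_{a,b,n,s}$ itself, no transfer through the isomorphism $\tilde{\mathcal{X}}_{a,b,n,s}\cong\mathcal{X}_{a,b,n,s}$ of the preceding lemma is needed: the citation applies directly.

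For a more self-contained derivation I would mimic the strategy of Theorem \ref{fullXtilda}, but starting from the larger ambient group. The subgroup $S_{q^3/p^b}\rtimes C_{(q+1)(p^b-1)m/s}$ appears as the group induced on $\mathcal{X}_{a,b,n,s}$ by the normalizer ${\rm N}(E_{p^b})$ of $E_{p^b}$ inside $\PGU(3,q)\rtimes C_{m/s}$, modulo $E_{p^b}$. Here the tame, prime-to-$p$ part $C_{(q+1)(p^b-1)m/s}$ is unchanged from the generic case of Theorem \ref{fullXtilda}, while the Sylow $p$-subgroup jumps from order $q/p^b$ to order $q^3/p^b$: this is because the full Sylow $p$-subgroup of $\PGU(3,q)$ has order $q^3$ (rather than $q$ as in the $\SL(2,q)$-case), and $E_{p^b}$ sits in its center, so its normalizer is the whole $p$-group and the quotient has order $q^3/p^b$. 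This produces the lower bound and fixes the isomorphism type.

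The main obstacle is proving fullness, and it is exactly here that the special range $m/s\mid(q^2-q+1)$ changes the picture. The orbit-separation argument used in Theorem \ref{fullXtilda} is no longer available, since by (the analogue of) Corollary \ref{cor:semigroupsXaa} the Weierstrass semigroups at $\overline{P}_\infty^j$ and at $\overline{P}_\alpha$ coincide once $m/s\mid(q^2-q+1)$; thus these places need not lie in distinct $\aut$-orbits, and the counting of short orbits underlying Case~1 and Case~2 of that proof collapses. I would therefore instead control $\aut(\mathcal{X}_{a,b,n,s})$ through the fixed field of its unique Sylow $p$-subgroup together with the normality of the image of $C_{m/s}$ and the classification of subgroups of $\PGU(3,q)$, as in the corresponding step of \cite{MTZ1}. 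Because this full analysis is carried out there, the most economical and reliable route — and the one I would adopt — is to invoke \cite[Theorem 1.2]{MTZ1} directly.
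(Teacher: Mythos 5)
Your proposal matches the paper's proof: the paper likewise disposes of this proposition in one line by invoking \cite[Theorem 1.2]{MTZ1}, the statement being about $\mathcal{X}_{a,b,n,s}$ itself so that no transfer through the isomorphism with $\tilde{\mathcal{X}}_{a,b,n,s}$ is required. Your additional self-contained sketch goes beyond what the paper records, but the route you actually adopt is identical.
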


\section{Galois covering problem for $\tilde{\mathcal{Y}}_{n,s}$ and $\tilde{\mathcal X}_{n,s,a,b}$}\label{sec:covering}

We consider the question whether the function fields $\tilde{\mathcal{Y}}_{n,s}$ and $\tilde{\mathcal X}_{n,s,a,b}$ are isomorphic to subfields of the Hermitian function field $\mathcal{H}_{q^n}$ over $\mathbb{F}_{q^{2n}}$. As for $m/s \mid (q^2-q+1)$ the function fields $\tilde{\mathcal{X}}_{a,b,n,s}$ and $\mathcal{X}_{a,b,n,s}$ are isomorphic (the same holds for $\tilde{\mathcal{Y}}_{n,s}$ and ${\mathcal{Y}}_{n,s}$), Theorems \ref{YtildaNotSubHerm} and \ref{XtildaNotSubHerm} are a direct consequence of \cite[Theorem 3.4]{TTT} and \cite[Theorem 4.4]{TTT}.

\begin{theorem} \label{YtildaNotSubHerm}
 If $s$ divides $q^2-q+1$ and $q>s(s+1)$, then the function field $\tilde{\mathcal Y}_{3,s}$ is not isomorphic to a subfield of the Hermitian function field over $\mathbb{F}_{q^6}$.  
\end{theorem}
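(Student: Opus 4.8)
The plan is to reduce the statement to the corresponding result for the GGS subfield $\mathcal{Y}_{3,s}$, which is already available in \cite{TTT}, by transporting it across the isomorphism $\tilde{\mathcal{Y}}_{3,s}\cong\mathcal{Y}_{3,s}$ that holds exactly in the present regime. The decisive observation is that ``being isomorphic to a subfield of the Hermitian function field $\mathcal{H}_{q^3}$ over $\mathbb{F}_{q^6}$'' depends only on the isomorphism class of a function field, so it suffices to know the analogous negative statement for $\mathcal{Y}_{3,s}$.

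First I would verify that the hypotheses place us in the isomorphic case of Remark \ref{aspetta}. Since $n=3$ we have $m=(q^3+1)/(q+1)=q^2-q+1$, so the hypothesis $s\mid q^2-q+1$ is precisely the standing assumption $s\mid m$, and consequently $m/s$ is a divisor of $m=q^2-q+1$; together with the trivially satisfied condition $3\mid n$, this is exactly the situation in which Remark \ref{aspetta} yields an isomorphism $\tilde{\mathcal{Y}}_{3,s}\cong\mathcal{Y}_{3,s}$. I would recall from that remark that the underlying reason is that $\tilde{\mathcal{C}}_3$ and $\mathcal{C}_3$ both coincide with the GK function field over $\mathbb{F}_{q^6}$, and that when $m/s\mid(q^2-q+1)$ the fields $\tilde{\mathcal{Y}}_{3,s}$ and $\mathcal{Y}_{3,s}$ are realized as the same Galois subfield of this common field.

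Next I would invoke \cite[Theorem 3.4]{TTT}, which asserts that for $s\mid q^2-q+1$ and $q>s(s+1)$ the function field $\mathcal{Y}_{3,s}$ is not isomorphic to any subfield of $\mathcal{H}_{q^3}$. Combining this with the isomorphism of the previous step and the invariance of the embedding obstruction under isomorphism, I would conclude that $\tilde{\mathcal{Y}}_{3,s}$ cannot be isomorphic to a subfield of $\mathcal{H}_{q^3}$, which is the assertion.

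I do not anticipate a genuine obstacle: the substantive content is already packaged in the two ingredients above, namely the identification $\tilde{\mathcal{Y}}_{3,s}\cong\mathcal{Y}_{3,s}$ (which ultimately rests on the fact that $\tilde{\mathcal{C}}_3$ and $\mathcal{C}_3$ are both the GK function field over $\mathbb{F}_{q^6}$) and the non-embedding result of \cite{TTT}. The only place calling for care is the elementary bookkeeping that, when $n=3$, the hypothesis $s\mid q^2-q+1$ genuinely coincides with the divisibility $m/s\mid(q^2-q+1)$ needed to enter the isomorphic regime; once this is checked, the conclusion is immediate.
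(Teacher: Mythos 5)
Your proposal is correct and follows exactly the paper's own argument: the paper likewise observes that for $n=3$ (so $m=q^2-q+1$ and $m/s\mid q^2-q+1$ automatically) the fields $\tilde{\mathcal{Y}}_{3,s}$ and $\mathcal{Y}_{3,s}$ are isomorphic via Remark \ref{aspetta}, and then deduces the statement directly from \cite[Theorem 3.4]{TTT}. Nothing is missing.
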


\begin{theorem} \label{XtildaNotSubHerm}
    If $a \geq 2b+1$ then the function field $\tilde{\mathcal{X}}_{a,b,3,1}$ is not isomorphic to a subfield of the Hermitian function field over $\mathbb{F}_{q^6}$.
\end{theorem}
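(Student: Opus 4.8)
The plan is to reduce the statement to the already-known case of the GGS subfield $\mathcal{X}_{a,b,3,1}$, whose non-embeddability into the Hermitian function field is established in \cite{TTT}, and then to transport the conclusion along the isomorphism provided by the last lemma of Section~\ref{sec:X}.

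First I would verify that the parameters $n=3$, $s=1$ fall into the regime where $\tilde{\mathcal{X}}_{a,b,n,s}$ and $\mathcal{X}_{a,b,n,s}$ are isomorphic. Indeed, for $n=3$ and $s=1$ one has $m=(q^3+1)/(q+1)=q^2-q+1$, so that $m/s=q^2-q+1$ divides $q^2-q+1$ and, trivially, $3\mid n$. These are exactly the hypotheses of the isomorphism lemma at the end of Section~\ref{sec:X}, which therefore yields an isomorphism $\tilde{\mathcal{X}}_{a,b,3,1}\cong\mathcal{X}_{a,b,3,1}$.

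Next I would invoke \cite[Theorem 4.4]{TTT}, which asserts that, under the hypothesis $a\ge 2b+1$, the function field $\mathcal{X}_{a,b,3,1}$ is not isomorphic to any subfield of the Hermitian function field $\mathcal{H}_{q^3}$ over $\mathbb{F}_{q^6}$. Since the property of being isomorphic to a subfield of $\mathcal{H}_{q^3}$ is invariant under isomorphism of function fields, the isomorphism $\tilde{\mathcal{X}}_{a,b,3,1}\cong\mathcal{X}_{a,b,3,1}$ immediately transfers this negative conclusion to $\tilde{\mathcal{X}}_{a,b,3,1}$, completing the argument.

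The proof is thus a transport of structure, and there is no genuine obstacle once the isomorphism lemma is available; the substantive work---showing that $\mathcal{X}_{a,b,3,1}$ is truly not a Hermitian subfield---is inherited entirely from \cite{TTT}. The only points demanding care are the bookkeeping ones: confirming that $n=3$, $s=1$ indeed force $m/s\mid(q^2-q+1)$, so that the isomorphism holds, and checking that the hypothesis $a\ge 2b+1$ matches verbatim the condition under which \cite[Theorem 4.4]{TTT} applies.
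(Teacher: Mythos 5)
Your proposal is correct and matches the paper's argument exactly: the paper also observes that for $n=3$, $s=1$ one has $m/s=q^2-q+1$, invokes the isomorphism $\tilde{\mathcal{X}}_{a,b,3,1}\cong\mathcal{X}_{a,b,3,1}$ from the lemma at the end of Section~\ref{sec:X}, and deduces the result as a direct consequence of \cite[Theorem 4.4]{TTT}.
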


We now point out some further examples of function fields of type $\tilde{\mathcal{Y}}_{n,s}$ and $\tilde{\mathcal X}_{n,s,a,b}$ not isomorphic to Galois subfields of the Hermitian function field $\mathcal{H}_{q^n}$ can be found, i.e. for further values of $a,b,n$ and $s$ not covered by Theorems \ref{YtildaNotSubHerm} and \ref{XtildaNotSubHerm}.
Understanding whether in general $\tilde{\mathcal{Y}}_{n,s}$ and $\tilde{\mathcal X}_{n,s,a,b}$ are isomorphic to subfields of $\mathcal{H}_{q^n}$ remains an open problem.

Suppose $\tilde{\mathcal Y}_{n,s}$ (resp. $\tilde{\mathcal X}_{n,s,a,b}$) is isomorphic to a subfield $F \subseteq \mathcal{H}_{q^n}$ and let $d := [\mathcal{H}_{q^n} \, : \, F]$. Clearly $g:=g(F)=g(\tilde{\mathcal Y}_{n,s})$ (resp. $\tilde{\mathcal X}_{n,s,a,b}$). Using that each rational place of $\mathcal{H}_{q^n}$ lies over a rational place of $F$, together with the Hurwitz genus formula, one gets that
    \begin{equation}\label{eq:range}
\frac{q^{3n} + 1}{q^{2n}+1+2gq^n} \leq d \leq \frac{2g(\mathcal{H}_{q^n})-2}{2g-2}.
    \end{equation}
By substituting in \eqref{eq:range} the expression of $g$ (computed in Equation \eqref{eq:genusYns} and Lemma \ref{genXtilda}), one obtains a range for the possible values of $d$.
Theorems \ref{YtildaNotSubHerm} and \ref{XtildaNotSubHerm} were in fact obtained by having an empty range for $d$ in \eqref{eq:range}.
In the following some further examples are provided.

\begin{example}
Let $q=5$, $n=3$, $s=3$. Then 
Theorem \ref{YtildaNotSubHerm} does not apply to $\tilde{\mathcal Y}_{3,3}$. Assume by contradiction that $\tilde{\mathcal Y}_{3,3}$ is a Galois subfield of $\mathcal{H}_{5^3}$, with Galois group $G$. 
The bound \eqref{eq:range} gives $d=|G| \in \{16,17\}$.
As $d$ divides $|PGU(3,5^3)|$, we have $d=16$. 
By \cite[Theorem 2.4]{MZcomm}, in $PGU(3,5^3)$ elements of order $16$ do not exist, elements of order $2$ contribute $q^n+1$ to the different divisor, and elements of order $4$ or $8$ contribute $2$.
By the Hurwitz genus formula, the number $k\in\{1,\ldots, 15\}$ of involutions in $G$ satisfies $q^n(q^n-1)-2=16(2g-2)+k(q^n+1)+(15-k)2$; 
since $g(\tilde{\mathcal Y}_{3,3})=442$, we get $1356=124k$, a contradiction.
\end{example}

\begin{example}
Let $p=3$, $a=2$, $b=1$, $n=3$, $s=1$. Then 
Theorem \ref{XtildaNotSubHerm} does not apply to $\tilde{\mathcal X}_{2,1,3,1}$. Suppose that $\tilde{\mathcal X}_{2,1,3,1}$ is a Galois subfield of $\mathcal{H}_{9^3}$, with Galois group $G$.
The bound \eqref{eq:range} gives $d=|G|=28$.
By \cite[Theorem 2.4]{MZcomm}, nontrivial elements of $G$ contribute to the different divisor either $q^n+1$, if they have order $2$, or $2$, otherwise. By the Hurwitz genus formula, there exists $k\in\{1,\ldots, 27\}$ 
such that
$q^n(q^n-1)-2=28(2g-2)+k(q^n+1)+(27-k)2$;
since $g(\tilde{\mathcal X}_{2,1,3,1})=9369$, we get
$6048=728k$, a contradiction.
\end{example}

Also for $n>3$ function fields not isomorphic to subfields of $\mathcal{H}_{q^n}$ can be found.


\begin{example}
Let $p=2$, $a=2$, $b=1$, $n=5$, $s=1$. If $\tilde{\mathcal{X}}_{5,1,2,1}$ is a Galois subcover of $\mathcal{H}_{4^5}$ with Galois group $G$, then the bound \eqref{eq:range} yields $d=|G| \in \{130,132,143\}$.

Suppose that $d=130$. Then the degree $\Delta$ of the different divisor is $119090$. Nontrivial elements in $G$ have order $2, 5, 10, 13, 26, 65$ or $130$ and there exist at least one element of order $2$ and $12$ elements of order $13$. 
    By \cite[Theorem 2.4]{MZcomm}, elements of order $2$ contribute $q^n+2$ to $\Delta$, elements of order $13$ contribute $3$, and the remaining $129-13$ elements contribute at most $q^n+1$. Since $q^n+2+12\cdot 3+(129-14)(q^n+1)<\Delta<q^n+2+12\cdot 3+(129-13)(q^n+1)$, 
    at most one element of $G$ gives a contribution $\gamma < q^n+1$. Going through all the possibilities for $\gamma$ in \cite[Theorem 2.4]{MZcomm} shows that equality can never be reached.

Suppose that $d=132$. Then $d=104806$; hence, 
by \cite[Theorem 2.4]{MZcomm}, elements of order $2$ contribute $q^n+2$ to $\Delta$ while all the other nontrivial elements contribute $2$. Hence there exists $k\in\{1,\ldots,131\}$ with $104806=(q^n+2)k+2(131-k)=1024k+262$, a contradiction.
   
Suppose that $d=143$. Then $d=26244$. 
By \cite[Theorem 2.4]{MZcomm}, elements of order $143$ do not exist in $PGU(3,q^n)$, elements of order $11$ contribute $2$ to $\Delta$, while elements of order $13$ contribute $3$. Thus there exists $k\in\{1,\ldots,142\}$ with $26244=2k+3(142-k)$, a contradiction.
\end{example}

\section*{Acknowledgments}
This work was supported by a research grant (VIL”52303”) from Villum Fonden.

    \end{document}